\documentclass{amsart}

\usepackage{amsmath}
\usepackage{amssymb}

\newtheorem{theorem}{Theorem}[section]
\newtheorem{lemma}[theorem]{Lemma}

\newtheorem{corollary}{Corollary}[section]

\theoremstyle{definition}

\theoremstyle{remark}

\newtheorem*{claim}{Claim}

\numberwithin{equation}{section}

\newcommand{\abs}[1]{\lvert#1\rvert}

\begin{document}

\title[Combinatorial Principles for trees]{SOME COMBINATORIAL PRINCIPLES FOR TREES AND APPLICATIONS TO TREE-FAMILIES IN
BANACH SPACES}

\author{Costas Poulios}

\address{Department of Mathematics, University of Athens, 15784, Athens, Greece}

\email{costas314@gmail.com}

\author{Athanasios Tsarpalias}
\address{Department of Mathematics, University of Athens, 15784, Athens, Greece}
\email{atsarp@math.uoa.gr}

\subjclass[2010]{05D10, 46B15}

\date{}

\keywords{Combinatorial principles, tree-families in Banach spaces,
nearly unconditionality, convex unconditionality, semi-boundedly
complete sequence.}

\begin{abstract}
Suppose that $(x_s)_{s\in S}$ is a normalized family in a Banach
space indexed by the dyadic tree $S$. Using Stern's combinatorial
theorem we extend important results from sequences in Banach spaces
to tree-families. More precisely, assuming that for any infinite
chain $\beta$ of $S$ the sequence $(x_s)_{s\in\beta}$ is weakly
null, we prove that there exists a subtree $T$ of $S$ such that for
any infinite chain $\beta$ of $T$ the sequence $(x_s)_{s\in\beta}$
is nearly (resp., convexly) unconditional. In the case where
$(f_s)_{s\in S}$ is a family of continuous functions, under some
additional assumptions, we prove the existence of a subtree $T$ of
$S$ such that for any infinite chain $\beta$ of $T$, the sequence
$(f_s)_{s\in\beta}$ is unconditional. Finally, in the more general
setting where for any chain $\beta$, $(x_s)_{s\in\beta}$ is a
Schauder basic sequence, we obtain a dichotomy result concerning the
semi-boundedly completeness of the sequences $(x_s)_{s\in\beta}$.
\end{abstract}

\maketitle

\section{Introduction} In a well-known example B. Maurey and H. P. Rosenthal
\cite{Maurey} showed that if $(x_n)_{n\in\mathbb{N}}$ is a
normalized weakly null sequence in a Banach space then we could not
expect that $(x_n)$ admits an unconditional subsequence. Further, W.
T. Gowers and B. Maurey \cite{Gowers} exhibited a Banach space not
containing any unconditional basic sequence.

Despite the aforementioned constructions there are some positive
results where either some special sequences $(x_n)$ are considered
or weaker forms of unconditionality appear. More precisely, H. P.
Rosenthal proved the following theorem.

\begin{theorem}\label{th.Rosenthal}
Let $K$ be a compact Hausdorff space and let
$(f_n)_{n\in\mathbb{N}}$, $f_n:K\to\mathbb{R}$, be a sequence of
non-zero, continuous, characteristic functions. If
$(f_n)_{n\in\mathbb{N}}$ converges pointwise to zero, then it
contains an unconditional basic subsequence.
\end{theorem}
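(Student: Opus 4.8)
The plan is to reduce Theorem~\ref{th.Rosenthal} to a purely combinatorial statement about the supports of the functions. Since each $f_n$ is a non-zero continuous characteristic function, the set $A_n:=f_n^{-1}(\{1\})$ is a non-empty \emph{clopen} subset of $K$ and $f_n=\chi_{A_n}$; moreover $f_n\to 0$ pointwise says exactly that every $x\in K$ lies in only finitely many of the $A_n$. I would then pass to a subsequence along which the intersection multiplicity of the $A_n$ is uniformly bounded, and finally read off unconditionality from an explicit formula for the sup-norm.

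For the combinatorial step, for each $k\ge 1$ colour the $k$-element subsets $F$ of $\mathbb{N}$ according to whether $\bigcap_{n\in F}A_n$ is empty or not. By Ramsey's theorem together with a diagonal argument I would extract an infinite set $M\subseteq\mathbb{N}$ homogeneous for all of these colourings at once; because of the monotonicity $F\subseteq F'\Rightarrow\bigcap_{n\in F'}A_n\subseteq\bigcap_{n\in F}A_n$, there is then a threshold $d$ such that a $k$-subset of $M$ has non-empty intersection of the corresponding $A_n$'s precisely when $k\le d$, and $d\ge 1$ since every $A_n\neq\emptyset$. The step I expect to be the main obstacle is showing $d<\infty$: if every finite subset of $M$ had non-empty intersection, then enumerating $M=\{m_1<m_2<\cdots\}$, choosing $y_k\in\bigcap_{i\le k}A_{m_i}$, and taking a cluster point $y$ of $(y_k)_k$ in the compact space $K$, one would get $y\in A_{m_j}$ for every $j$ because each $A_{m_j}$ is \emph{closed} --- contradicting pointwise nullity. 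This is precisely where the continuity (equivalently, clopenness) of the $f_n$ is used.

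Now fix $1\le d<\infty$ as above. For each $d$-element subset $F$ of $M$ choose $x_F\in\bigcap_{n\in F}A_n$; since $F\cup\{m\}$ has $d+1$ elements for every $m\in M\setminus F$, we get $x_F\notin A_m$, so $f_n(x_F)=1$ for $n\in F$ and $f_n(x_F)=0$ for $n\in M\setminus F$. Conversely, for any $x\in K$ the set $\{n\in M:x\in A_n\}$ has at most $d$ elements. Combining these two observations, I would obtain, for every finite $G\subseteq M$ and all scalars $(a_n)_{n\in G}$, the identity
\[
\Bigl\|\sum_{n\in G}a_nf_n\Bigr\|_\infty=\max\Bigl\{\,\Bigl|\sum_{n\in F}a_n\Bigr|\;:\;F\subseteq G,\ \abs{F}\le d\,\Bigr\},
\]
the inequality $\le$ coming from the multiplicity bound and $\ge$ from testing against a suitable $x_F$ (extend a given $F\subseteq G$ to a $d$-subset of $M$ meeting $G$ only in $F$).

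Finally, from the displayed formula, passing from $G$ to a subset only shrinks the right-hand side, so the natural coordinate projections are contractions; hence $(f_n)_{n\in M}$ is $1$-suppression unconditional (in particular a monotone Schauder basic sequence). Splitting any admissible $F$ into its positive and negative parts gives $\bigl\|\sum\varepsilon_na_nf_n\bigr\|\le 2\bigl\|\sum a_nf_n\bigr\|$ for every choice of signs $\varepsilon_n$, so $(f_n)_{n\in M}$ is $2$-unconditional, which proves the theorem. Beyond the simultaneous Ramsey homogenisation and the compactness argument bounding $d$, every step is routine.
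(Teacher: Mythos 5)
Your reduction to clopen sets $A_n=f_n^{-1}(\{1\})$, the compactness-plus-closedness argument ruling out the finite intersection property, and the final passage from suppression unconditionality to $2$-unconditionality are all sound, and they are in the spirit of the combinatorial proof the paper points to (Theorem \ref{th.Rosenthal.comb.}). The gap is the homogenisation step, and it is not a repairable technicality: the structure you want to extract does not exist in general. First, Ramsey plus diagonalisation does not yield a single infinite $M$ homogeneous for the $k$-colourings for \emph{all} $k$ simultaneously; the diagonal set is only homogeneous for the $k$-colouring on its tail past the $k$-th element, and $k$-subsets meeting the first $k-1$ elements of $M$ are uncontrolled. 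Second, and more seriously, the conclusion you want --- a threshold $d$ such that every $d$-subset of $M$ has non-empty intersection and every $(d+1)$-subset has empty intersection --- is false. Take $K=\mathcal{S}:=\{F\subseteq\mathbb{N}\ \text{finite}\mid \abs{F}\le\min F\}\cup\{\emptyset\}$, a closed (hence compact) subset of $\{0,1\}^{\mathbb{N}}$, and $A_n=\{F\in K\mid n\in F\}$, a non-empty clopen set; then each $f_n=\chi_{A_n}$ is a non-zero continuous characteristic function and $(f_n)$ converges pointwise to $0$. Here $\bigcap_{n\in G}A_n\neq\emptyset$ if and only if $G\in\mathcal{S}$, i.e. $\abs{G}\le\min G$. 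For any infinite $M$ and any $d$, the $(d+1)$-subsets of $M$ with minimum at least $d+1$ always have non-empty intersection, while for $d\ge\min M$ those containing $\min M$ have empty intersection; so for no $d$ is $[M]^{d+1}$ monochromatic of colour ``empty'', and no uniform multiplicity bound exists. Consequently your norm identity, which presupposes that every point of $K$ lies in at most $d$ of the sets $A_n$, $n\in M$, cannot hold for any finite $d$.

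What survives, and what the sources cited by the paper actually prove, is the weaker \emph{hereditariness} statement of Theorem \ref{th.Rosenthal.comb.}: setting $F_x=\{n\mid f_n(x)=1\}$, the family $\mathcal{F}=\{F_x\mid x\in K\}$ has pointwise closure consisting of finite sets (your cluster-point argument proves exactly this), and one can choose $M$ so that $\overline{\mathcal{F}}[M]=\{F\cap M\mid F\in\overline{\mathcal{F}}\}$ is closed under taking subsets --- not so that it equals $[M]^{\le d}$. Hereditariness already suffices: for finite $G\subseteq M$ one has $\bigl\|\sum_{n\in G}a_nf_n\bigr\|_\infty=\sup\bigl\{\abs{\sum_{n\in G\cap H}a_n}\mid H\in\overline{\mathcal{F}}[M]\bigr\}$, and when passing to $G'\subseteq G$ one replaces $H$ by $G'\cap H$, which again lies in $\overline{\mathcal{F}}[M]$; this gives suppression-$1$-unconditionality and then your factor-of-$2$ argument concludes. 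So you should replace the ``uniform $d$'' homogenisation by the hereditariness extraction, which requires the genuinely stronger Ramsey argument behind Theorem \ref{th.Rosenthal.comb.}, not merely colourings by cardinality.
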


Although the initial proof uses transfinite induction, the nature of
the previous result is purely combinatorial. Indeed, the proof of
Theorem \ref{th.Rosenthal} (see \cite{Godefroy} and \cite{Odell})
can be obtained from the next result which in turn depends on the
infinite Ramsey theorem. In the following, if $M$ is an infinite
subset of $\mathbb{N}$, $[M]^\omega$ denotes the set of all infinite
subsets of $M$.

\begin{theorem}\label{th.Rosenthal.comb.}
Let $\mathcal{F}\subset \mathcal{P}(\mathbb{N})$ be a compact family
of finite subsets of $\mathbb{N}$. Then for any
$N\in[\mathbb{N}]^\omega$, there exists $M\in[N]^\omega$ such that
the family $\mathcal{F}[M]=\{F\cap M \mid F\in\mathcal{F}\}$ is
hereditary (that is, if $A\subset B$ and $B\in \mathcal{F}[M]$, then
$A\in \mathcal{F}[M]$).
\end{theorem}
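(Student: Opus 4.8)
The plan is to recast the conclusion as a ``realisability'' statement about how the members of $\mathcal{F}$ meet finite sets, to use the infinite Ramsey theorem to stabilise this, and then to invoke compactness of $\mathcal{F}$ — which is precisely what forbids a ``spreading'' alternative — in order to choose $M$ by a recursion. As a first reduction we may assume $\mathcal{F}\neq\emptyset$ and $\emptyset\in\mathcal{F}$, and we pass to the downward closure $\widehat{\mathcal{F}}=\{A:A\subseteq F\text{ for some }F\in\mathcal{F}\}$, which is again compact (a pointwise limit of sets $A_n\subseteq F_n\in\mathcal{F}$ is, after passing to a convergent subsequence of $(F_n)$, contained in the corresponding limit, hence in $\widehat{\mathcal{F}}$). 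For any infinite $M$ one has $\mathcal{F}[M]\subseteq\widehat{\mathcal{F}}[M]=\widehat{\mathcal{F}}\cap[M]^{<\omega}$, and intersecting a witness $F$ (with $F\cap M=B$) by $M$ shows that $\mathcal{F}[M]$ is hereditary if and only if $\mathcal{F}[M]=\widehat{\mathcal{F}}\cap[M]^{<\omega}$, i.e.\ if and only if every $s\in\widehat{\mathcal{F}}$ with $s\subseteq M$ is \emph{realised}: there is $G\in\mathcal{F}$ with $s\subseteq G$ and $(G\setminus s)\cap M=\emptyset$. So the goal becomes: produce such an $M\subseteq N$.

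The one place where compactness is used is the following: if $s$ is a finite set and for arbitrarily large $n$ there is $F\in\mathcal{F}$ with $s\subseteq F\subseteq s\cup[n,\infty)$, then $s\in\mathcal{F}$ — because such $F$'s converge pointwise to $s$ and $\mathcal{F}$ is closed. Equivalently, given a finite set $B$ disjoint from $s$, there are only finitely many $p$ for which $s\cup\{p\}\in\widehat{\mathcal{F}}$ while every $G\in\mathcal{F}$ with $s\cup\{p\}\subseteq G$ meets $B$, \emph{provided} no $s\cup\{b\}$ with $b\in B$ has infinitely many one-step extensions in $\widehat{\mathcal{F}}$. It is exactly these ``trapping'' configurations that the Ramsey step below has to exclude.

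For the Ramsey step: for each $k\geq1$ colour $[N]^{k}$ by the $\mathcal{F}$-type of $\{n_1<\dots<n_k\}$, namely $\{I\subseteq\{1,\dots,k\}:\{n_i:i\in I\}\in\mathcal{F}\}$ (finitely many colours); apply the infinite Ramsey theorem successively to $c_1,c_2,\dots$ and diagonalise to obtain $M_0=\{p_1<p_2<\dots\}\subseteq N$ on which, for every $k$, all $k$-subsets with minimum $\geq p_k$ carry one common type $P_k$. Monochromaticity forces, for $t\subseteq M_0$ far enough out, that ``$t\in\mathcal{F}$'' and ``$t\in\widehat{\mathcal{F}}$'' depend only on $|t|$; since $\widehat{\mathcal{F}}$ is hereditary this exhibits $\widehat{\mathcal{F}}$ restricted to a tail of $M_0$ as generated by an initial segment of levels, and compactness then bounds the discrepancy between $\mathcal{F}$ and $\widehat{\mathcal{F}}$ along $M_0$, in particular ruling out the trapping configurations. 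One then builds $M=\{q_1<q_2<\dots\}\subseteq M_0$ recursively, together with finitely many ``committed'' finite sets: having chosen $q_1,\dots,q_k$, pick $q_{k+1}\in M_0$ beyond the committed sets and — using the compactness input, applicable now that trapping is excluded — such that for every $s\subseteq\{q_1,\dots,q_k\}$ with $s\cup\{q_{k+1}\}\in\widehat{\mathcal{F}}$ there is $G\in\mathcal{F}$ with $s\cup\{q_{k+1}\}\subseteq G$ and $G\cap\{q_1,\dots,q_k\}=s$; then adjoin $G\setminus(s\cup\{q_{k+1}\})$ to the committed sets and require all later $q_j$ to avoid them. By construction every $s\in\widehat{\mathcal{F}}$ with $s\subseteq M$ is realised, so $\mathcal{F}[M]$ is hereditary.

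The delicate point — and what I expect to be the main obstacle — is that a priori the recursion can get stuck: for the compact family $\mathcal{F}=\{\emptyset,\{b\}\}\cup\{\{b,p\}:p\neq b\}$, once the ``dangerous'' element $b$ lies in $M$ no later choice can make any singleton $\{p\}$ a realisable trace, and heredity of $\mathcal{F}[M]$ is irreparably lost. Hence the Ramsey pre-processing producing $M_0$ must be strong enough to guarantee that such dangerous elements are never admitted — equivalently, that for each relevant $s$ only finitely many candidates for the next element of $M$ are ``bad'' — and this is where compactness of $\mathcal{F}$ must be used to destroy the spreading alternative inside each colour class. Identifying the precise colourings that achieve this and carrying out that verification is the heart of the argument; the remaining bookkeeping (thresholds in the diagonalisation, finiteness of the committed sets) is routine.
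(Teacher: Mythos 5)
First, a point of reference: the paper itself gives no proof of Theorem~\ref{th.Rosenthal.comb.} --- it is quoted as a known principle with pointers to \cite{Godefroy} and \cite{Odell} --- so your attempt has to be judged on its own terms rather than against an in-paper argument.

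Your opening reductions are correct and are the right first moves: heredity of $\mathcal{F}[M]$ is indeed equivalent to every $s\in\widehat{\mathcal{F}}$ with $s\subseteq M$ being realised as an exact trace $G\cap M=s$; the downward closure $\widehat{\mathcal{F}}$ is again compact; and compactness does force $s\in\mathcal{F}$ whenever $s\subseteq F\subseteq s\cup[n,\infty)$ is solvable in $\mathcal{F}$ for arbitrarily large $n$. The genuine gap is exactly the one you flag yourself and then leave open. The recursion building $M$ needs that, for each $s\subseteq\{q_1,\dots,q_k\}$, only finitely many candidates $p$ are bad (i.e.\ $s\cup\{p\}\in\widehat{\mathcal{F}}$ but every $G\in\mathcal{F}$ containing $s\cup\{p\}$ meets $\{q_1,\dots,q_k\}\setminus s$). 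Your compactness lemma yields this only under a proviso --- that no $s\cup\{b\}$ has infinitely many one-step extensions in $\widehat{\mathcal{F}}$ --- which fails precisely in the trapping example you exhibit, so the lemma cannot be invoked as stated. The burden of excluding trapping is then shifted onto the Ramsey preprocessing, but the only colouring you actually specify, the $\mathcal{F}$-type of $k$-subsets of $N$, records nothing beyond membership in $\mathcal{F}$ of subsets of the selected set $M_0$; it is blind to the witness structure, namely to which sets $G\in\mathcal{F}$ (which typically stick outside $M_0$) realise a given trace and which already-chosen points they are forced to pass through. You then write that ``identifying the precise colourings that achieve this and carrying out that verification is the heart of the argument'' and do not carry it out. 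Since that step is the entire content of the theorem --- everything before and after it is, as you yourself note, routine --- what you have is a plausible plan, not a proof.

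To close the gap, the stabilisation has to be applied not to a membership pattern but to a realisability predicate that involves the remaining pool: roughly, for a finite set $s$ and a tail $L$ of candidates, whether there exists $F\in\mathcal{F}$ with $F\cap(s\cup L)=s$, together with its relativisations to the points already chosen. One then uses a Ramsey-type dichotomy on $L$ for this predicate and uses compactness to show that the stable negative alternative forces $s$ never to occur as a subset of a trace of $M$ at all (so that heredity is not tested at $s$), rather than hoping, as in your sketch, that the positive alternative can always be arranged. Until that stabilisation is identified and verified, the recursion can get stuck in exactly the way your own example demonstrates.
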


As a matter of fact, it was infinite Ramsey theory which led to a
series of positive results. One of them was obtained by J. Elton
\cite{Elton} (see also \cite{Odell}).

\begin{theorem}\label{th.Elton}
Every normalized weakly null sequence in a Banach space contains a
nearly unconditional subsequence.
\end{theorem}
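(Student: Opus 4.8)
\medskip
\noindent\textit{Proof proposal.} The plan is to follow the route leading from Theorem~\ref{th.Rosenthal.comb.} to Theorem~\ref{th.Rosenthal} --- reduce the norm estimate to a Ramsey-type stabilization of families of finite subsets of $\mathbb{N}$ --- while compensating, by an appeal to weak nullity, for the fact that the family one naturally writes down need not be compact; this is also the reason the conclusion here is only \emph{near} unconditionality. Concretely, I would first replace $(x_n)$ by a normalized basic subsequence via the Bessaga--Pe\l czy\'nski selection principle, with basis constant $b$. Since near unconditionality is a property quantified over finitely supported scalars, it is inherited by every subsequence, so it is enough to exhibit one good subsequence; fix $\delta_k\downarrow 0$. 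It then suffices to prove: for each $\delta\in(0,1)$ and each $M\in[\mathbb{N}]^\omega$ there exist $N\in[M]^\omega$ and $C_\delta<\infty$ such that $\bigl\|\sum_{i\in B}a_ix_i\bigr\|\le C_\delta$ whenever $A\subseteq N$ is finite, $\abs{a_i}\le1$ for $i\in A$, $\bigl\|\sum_{i\in A}a_ix_i\bigr\|\le1$, and $B\subseteq\{i\in A:\abs{a_i}\ge\delta\}$. Applying this to $\delta_1,\delta_2,\dots$ in turn and diagonalizing gives a single subsequence good for all the $\delta_k$, whence the required gauge $\Delta$. After a preliminary (and somewhat technical) reduction --- a perturbation together with a passage to a finite net --- one may also assume all coefficients occurring are drawn from a fixed finite subset of $[-1,1]$ depending only on $\delta$, so that for each length $n$ there are just finitely many ``configuration types'' $(n,P,\phi)$, where $P\subseteq\{1,\dots,n\}$ records the position of $B$ inside $A$ and $\phi$ the coefficient pattern.

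The argument is now combinatorial. Each configuration type determines a $2$-colouring of $[\mathbb{N}]^n$ --- ``is this type realized here by a configuration whose $B$-part has norm exceeding the current threshold?'' --- which the infinite Ramsey theorem (the engine behind Theorem~\ref{th.Rosenthal.comb.}) stabilizes; diagonalizing over $n$, I pass to a subsequence on which, for every type, the good/bad alternative is settled once and for all. If the bad alternative never occurs, we are finished, with $C_\delta$ the chosen threshold. The crux is therefore to rule out that bad configurations persist in \emph{every} subsequence --- and here Theorem~\ref{th.Rosenthal.comb.} is of no direct use, because $c_0$-type behaviour permits arbitrarily long admissible index sets and so destroys the compactness of the relevant family of finite sets.

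This is the step I expect to be the main obstacle, and it is where weak nullity is essential. Supposing, for contradiction, that bad configurations survive in every subsequence, one exploits that blocks of a weakly null basic sequence are again weakly null and basic, plus a further Ramsey stabilization of the interleaving patterns, to extract a coherent, successively supported sequence of configurations: vectors $u_N=\sum_{i\in A_N}a_i^Nx_i$ with $\|u_N\|\le1$ whose distinguished sub-sums $v_N=\sum_{i\in B_N}a_i^Nx_i$ have $\|v_N\|\to\infty$; then $u_N-v_N$ likewise has norm tending to infinity. The aim is to arrange, via the pattern stabilization (splitting each configuration into a bounded number of pieces if need be), that $B_N$ and $A_N\setminus B_N$ occupy successive blocks of the basis, so that the elementary inequality $\|y+z\|\ge c\max(\|y\|,\|z\|)$ for successive blocks (with $c=c(b)>0$) forces $\|u_N\|=\bigl\|v_N+(u_N-v_N)\bigr\|\to\infty$, against $\|u_N\|\le1$. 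Isolating the correct intermediate structure, and holding the discretization error, the configuration types, and the interleaving patterns under simultaneous control through all the diagonalizations, is the delicate part --- it is precisely what makes this theorem substantially harder than Theorem~\ref{th.Rosenthal}. Granting it, $C_\delta<\infty$ exists for every $\delta$, and the diagonalization of the first paragraph completes the proof.
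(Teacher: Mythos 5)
Your scaffolding (Bessaga--Pe\l czy\'nski selection, reduction to a single $\delta$, discretization of the coefficients to a finite net, diagonalization over $\delta_k\downarrow 0$) is sound, and you correctly locate the difficulty; but the step you yourself flag as the main obstacle is a genuine gap, and the mechanism you propose for it cannot work. The sets $B_N$ and $A_N\setminus B_N$ are arbitrarily interleaved inside $A_N$, and no Ramsey stabilization of ``interleaving patterns'' converts them into successive blocks: the number of alternations between $B_N$ and its complement in $A_N$ is unbounded, so ``splitting into a bounded number of pieces'' is not available, and the inequality $\|y+z\|\ge c\max(\|y\|,\|z\|)$ for a basic sequence with constant $b$ holds only when $y$ and $z$ are supported on an initial interval and its complement. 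Asserting it for arbitrarily interleaved disjoint supports is precisely suppression unconditionality --- the very property being proved, and false in general by the Maurey--Rosenthal example cited in the introduction. So the proposed contradiction is circular at the decisive moment, and weak nullity enters your argument only through the innocuous remark that blocks of a weakly null basic sequence are weakly null, which carries no force here.

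The paper does not reprove this theorem; it quotes it as following from Theorem~\ref{th.Elton.comb.} (Elton, as presented in \cite{Odell}), and that route uses weak nullity quite differently. Weak nullity makes $F=\{(f(x_n))_{n}\mid f\in B_{X^{*}}\}$ a weakly compact subset of the unit ball of $c_0$, to which the combinatorial principle applies. To bound $\bigl\|\sum_{i\in B}a_ix_i\bigr\|$ one takes a norming functional $f$ for this restricted sum, isolates the indices $I\subseteq B$ carrying its positive mass (which exceeds a fixed multiple of $\delta$ because $\abs{a_i}>\delta$ on $B$ and the restricted sum is assumed large), and invokes the principle to replace $f$ by $g\in F$ retaining a proportion $(1-\epsilon)$ of that mass on $I$ while having total absolute mass at most $\epsilon$ times it on the remaining indices; evaluating $g$ against the full sum $\sum_{i\in A}a_ix_i$ then gives the lower bound and hence $C(\delta)$. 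This mass-transfer argument on norming functionals is the idea missing from your proposal; without it, or a substitute of comparable strength, the persistent ``bad configurations'' cannot be ruled out.
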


The notion of nearly unconditionality concerns the unconditional
behavior of linear combinations with coefficients bounded away from
zero. The precise definition is the following: A normalized sequence
$(x_n)$ in a Banach space is called \emph{nearly unconditional} if
for every $\delta>0$ there exists $C=C(\delta)>0$ such that for any
$n\in\mathbb{N}$, any scalars $a_1,\ldots,a_n\in[-1,1]$ and any
$F\subseteq\{i\le n \mid |a_i|>\delta\}$,
$$\Big\|\sum_{i\in F}a_ix_i\Big\|\le
C(\delta)\Big\|\sum^n_{i=1}a_ix_i\Big\|.$$

Using Ramsey's theory in a very elegant way, Elton proved the
following principle from which Theorem \ref{th.Elton} is obtained.

\begin{theorem}\label{th.Elton.comb.}
Let $F$ be a weakly compact subset of the unit ball of $c_0$ and let
$\delta>0$ and $\epsilon\in(0,1)$ be given. Then for every
$N\in[\mathbb{N}]^\omega$, there exists
$M=\{m_i\}_{i=1}^\infty\in[N]^\omega$ such that:

for every $f\in F$, $n\in\mathbb{N}$ and $I\subseteq \{i\leq n \mid
f(m_i)>0\}$ with $\sum_{i\in I}f(m_i)>\delta$ there exists $g\in F$
such that
\begin{enumerate}
\renewcommand{\theenumi}{\roman{enumi}}
    \item $\sum_{i\in I}g^+(m_i)>(1-\epsilon)\sum_{i\in I}f(m_i)$
    [where $g^+=\max(g,0)$]
    \item $\sum_{i\in J}|g(m_i)|<\epsilon\sum_{i\in I}f(m_i)$, where
    $J=\{i\leq n \mid i\notin I \text{ or } g(m_i)<0\}$.
\end{enumerate}
\end{theorem}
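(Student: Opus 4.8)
The plan is to combine two ingredients: the special structure of weakly compact subsets of $c_0$, and an iterated application of the infinite Ramsey theorem to thin out $N$. The structural input is the elementary fact that, for a \emph{bounded} subset of $c_0$, weak convergence of sequences coincides with coordinatewise convergence (given $a\in\ell_1$ and $\eta>0$, split off a finite head and use boundedness on the tail); consequently $F$, being weakly compact, is compact in the topology of coordinatewise convergence, and — crucially — no sequence drawn from $F$ can coordinatewise-converge to a point of $\ell_\infty\setminus c_0$. This is exactly what forbids $F$ from ``spreading out the summing basis'' and is the obstruction the construction will repeatedly exploit.

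Fix $\delta$ and $\epsilon$. For a finite set $I$ of coordinates put $\lambda(I)=\sup\{\sum_{k\in I}f^+(k):f\in F\}$ and, for $h\in F$, define the \emph{overflow} $o_I(h)=\sum_{k\notin I}\lvert h(k)\rvert+\sum_{k\in I}h^-(k)$ (with $h^-=\max(-h,0)$), the sums running over the ambient infinite set, possibly $+\infty$. Since $h\mapsto\sum_{k\in I}h^+(k)$ depends on only finitely many coordinates it is coordinatewise-continuous, so on the (coordinatewise-)compact set $F$ the supremum $\lambda(I)$ is attained, and $o_I$ is coordinatewise lower semicontinuous. The combinatorial goal is a set $M=\{m_i\}\subseteq N$ such that \emph{for every finite $I\subseteq M$ with $\lambda(I)>\delta$ there is $g\in F$ with $\sum_{k\in I}g^+(k)>(1-\epsilon)\lambda(I)$ and $o_I(g)<\epsilon\delta$.} Granting this, the theorem is immediate: given $f\in F$, $n$, and $I\subseteq\{i\le n:f(m_i)>0\}$ with $\sum_{i\in I}f(m_i)>\delta$, apply the displayed property to $\widehat I=\{m_i:i\in I\}$, for which $\lambda(\widehat I)\ge\sum_{i\in I}f^+(m_i)=\sum_{i\in I}f(m_i)$; then $\sum_{i\in I}g^+(m_i)>(1-\epsilon)\sum_{i\in I}f(m_i)$ is (i), and since $J\subseteq(\{i\le n\}\setminus I)\cup\{i\in I:g(m_i)<0\}$ we get $\sum_{i\in J}\lvert g(m_i)\rvert\le o_{\widehat I}(g)<\epsilon\delta<\epsilon\sum_{i\in I}f(m_i)$, which is (ii).

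To produce $M$ I would run the infinite Ramsey theorem successively on $r$-element subsets, $r=1,2,\dots$, diagonalizing. At stage $r$, on the infinite set $P_{r-1}$ obtained so far (with $P_0=N$), colour $I\in[P_{r-1}]^r$ with $0$ if $\lambda(I)\le\delta$; with $1$ if $\lambda(I)>\delta$ and some $g\in F$ witnesses the displayed property for $I$ (with overflow computed in $P_{r-1}$); and with $2$ otherwise — only three colours, so Ramsey returns a monochromatic $P_r\subseteq P_{r-1}$. If every stage yields colour $0$ or $1$, the diagonal set $M$ works: a finite $I\subseteq M$ with $\lambda(I)>\delta$ cannot have received colour $0$, hence received colour $1$, and the required $g$ exists (overflow only decreases on passing from $P_{r-1}$ to the smaller $M$). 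Thus the whole problem reduces to excluding colour $2$ at every stage. Suppose colour $2$ were monochromatic at stage $r$: then every $I\in[P_r]^r$ has $\lambda(I)>\delta$ yet no near-optimal element of $F$ has overflow below $\epsilon\delta$. Choosing disjoint $r$-sets $I_1<I_2<\cdots$ marching to infinity and $g_k\in F$ optimal on $I_k$, one has $\sum_{j\in I_k}g_k^+(j)>\delta$ while $o_{I_k}(g_k)\ge\epsilon\delta$; after a further Ramsey-type bookkeeping that records \emph{where} the surplus mass of $g_k$ sits (before, among, or after the coordinates of $I_k$) one arranges, along a subsequence, this fixed amount $\ge\epsilon\delta$ of mass to be pinned into the gaps between consecutive $I_k$'s, so that the coordinatewise limit of the $g_k$ retains a block of mass bounded away from $0$ arbitrarily far out — a limit in $\ell_\infty\setminus c_0$, contradicting weak compactness of $F$.

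The main obstacle is precisely this last contradiction. The difficulty is that ``overflow'' records only mass lying \emph{outside} $I$, so a priori the surplus of each $g_k$ could cluster near the front of $\mathbb N$ and survive to a coordinatewise limit inside $c_0$, yielding nothing; the remedy is to enrich the Ramsey colouring — finitely, by position type of the surplus and by a discretized amount of ``new'' surplus — and diagonalize once more, forcing the surplus to march genuinely to infinity before invoking coordinatewise compactness. Everything else — attainment of the suprema, semicontinuity of the overflow, the bookkeeping across the diagonalization, and the final verification of (i) and (ii) — is routine.
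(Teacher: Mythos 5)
This statement is Elton's combinatorial principle; the paper does not prove it but imports it from \cite{Elton} and \cite{Odell}, so your attempt has to stand on its own. It does not: the decisive gap is the claim that ``colour $2$'' can always be excluded. Take $F=\{e_j-e_1 : j\ge 2\}\cup\{-e_1\}$, a weakly compact subset of the unit ball of $c_0$ (the sequence $e_j-e_1$ converges weakly to $-e_1\in F$), with $\delta=\epsilon=\tfrac12$ and $N=\mathbb{N}$. At stage $r=1$, with the overflow computed in the ambient set $P_0=\mathbb{N}$, every singleton $\{j\}$, $j\ge2$, receives colour $2$: here $\lambda(\{j\})=1>\delta$, the only $g\in F$ with $g^+(j)>\tfrac12$ is $g=e_j-e_1$, and its overflow is $\abs{g(1)}=1\ge\epsilon\delta$. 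So colour $2$ is monochromatic on an infinite set, yet no contradiction with weak compactness is available: the near-optimizers $g_k=e_{j_k}-e_1$ converge coordinatewise to $-e_1$, which lies in $c_0$ and in $F$. Their surplus mass is pinned at coordinate $1$ forever, so no finite enrichment of the colouring by ``position type'' can force it to march to infinity --- it genuinely does not. The theorem is nevertheless true for this $F$: one simply takes $M\subseteq\{2,3,\dots\}$, after which the sums in (ii) never see coordinate $1$. What the example shows is that the overflow must be measured relative to the final set $M$, which is not yet known when the colouring is defined; your colours are therefore not stable under passing to subsets, and the Ramsey dichotomy does not deliver the exclusion you need. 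This circularity is exactly what makes Elton's actual argument delicate.

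There are two further problems. First, your ``combinatorial goal'' sums the overflow over the \emph{entire} infinite ambient set and discards the parameter $n$; since elements of $F$ need not lie in $\ell_1$ (e.g.\ $F=\{(1/n)_{n\in\mathbb{N}}\}$), the quantity $\sum_{k\in M\setminus I}\abs{g(k)}$ can be $+\infty$ for every admissible $g$, so the statement you reduce to is strictly stronger than the theorem (which controls only the coordinates $m_i$ with $i\le n$) and its truth is not established. Second, an $r$-element subset of the diagonal set $M$ need not lie in $P_{r-1}$, because its smallest elements were fixed at earlier stages; this is repairable for hereditary colourings but compounds the ambient-set dependence above. The first paragraph's obstruction is the fatal one: the overall architecture --- colour finite sets by whether a low-overflow near-optimizer exists, then exclude the bad colour by coordinatewise compactness --- does not survive the counterexample.
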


The subsequences of a weakly null sequence have also been
investigated with respect to the property of convex
unconditionality. A normalized sequence $(x_n)$ in a Banach space is
called \emph{convexly unconditional} if for every $\delta>0$ there
exists $C=C(\delta)>0$ such that for any absolutely convex
combination $x=\sum^\infty_{n=1}a_nx_n$ with $\|x\|\ge\delta$ and
any sequence $(\varepsilon_n)_{n\in\mathbb{N}}$ of signs,
$$\Big\|\sum^\infty_{n=1}\varepsilon_na_nx_n\Big\|\ge C(\delta).$$
The next result, concerning the case of convex unconditionality, has
been proved by S. A. Argyros, S. Mercourakis and A. Tsarpalias
\cite{Arg}.

\begin{theorem}\label{th.AMT}
Every normalized weakly null sequence in a Banach space contains a
convexly unconditional subsequence.
\end{theorem}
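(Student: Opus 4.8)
The plan is to follow the pattern of Elton's and Rosenthal's theorems: pass to the dual side, where the statement becomes one about a weakly compact subset of $c_0$, and then extract the subsequence by a Ramsey-type stabilization.

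\emph{Step 1 (reduction to $c_0$).} Let $S\colon\ell_1\to X$ be the norm-one operator $S\big((b_n)_n\big)=\sum_nb_nx_n$. Its adjoint sends $f\in X^*$ to $(f(x_n))_n\in\ell_\infty$, and since $(x_n)$ is weakly null it in fact maps into $c_0$; being the restriction of a $\sigma(X^*,X)$-to-$\sigma(\ell_\infty,\ell_1)$ continuous map, and the $\sigma(\ell_\infty,\ell_1)$ and weak topologies agreeing on $c_0$, the map $T\colon(B_{X^*},w^*)\to(c_0,w)$, $Tf=(f(x_n))_n$, is continuous, so $F:=T(B_{X^*})$ is a weakly compact subset of the unit ball of $c_0$. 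By Hahn--Banach, $\big\|\sum_na_nx_n\big\|=\sup_{h\in F}\big|\sum_na_nh(n)\big|$ for every finitely supported $(a_n)$. Since passing to a subsequence only replaces $F$ by its image under a coordinate projection (again weakly compact in $B_{c_0}$), and since, by a routine nested-subsequence diagonalization, it suffices to treat each level $\delta=1/k$ separately, the whole problem reduces to: \emph{for every weakly compact $F\subseteq B_{c_0}$ and every $\delta\in(0,1)$ there are an infinite $M=\{m_i\}_{i\ge1}$ and $C>0$ such that whenever $\sum_i|a_i|\le1$, $\sup_{h\in F}\big|\sum_ia_ih(m_i)\big|\ge\delta$ and $(\varepsilon_i)$ are signs, one has $\sup_{h\in F}\big|\sum_i\varepsilon_ia_ih(m_i)\big|\ge C$.}

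\emph{Step 2 (stabilization at a fixed level).} Fix $\delta$. The structural fact to exploit is that a weakly compact subset of $c_0$ is uniformly small: for each $\eta>0$ there is $N(\eta)$ with $\big|\{i:|h(i)|>\eta\}\big|\le N(\eta)$ for all $h\in F$. I would therefore color the finite subsets $s\subseteq\mathbb{N}$ by a finite ``trace'' recording, up to a precision tied to $\delta$, which sign-and-magnitude patterns on low-cardinality subsets $E\subseteq s$ are approximately realized by functionals of $F$ that are simultaneously negligible on $s\setminus E$, and apply the infinite Ramsey theorem (this is the step replaced by Stern's theorem in the tree setting) to obtain an infinite $M$ on which the trace is constant. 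The intended mechanism to finish is a sign-splitting argument: given $h\in F$ with $\sum_ia_ih(m_i)\ge\delta$, partition the indices according to whether $\varepsilon_ia_ih(m_i)\ge0$; the dominant class $A$ carries mass $\sum_{i\in A}|a_ih(m_i)|\ge\delta/2$, and constancy of the trace along $M$ should produce $h'\in F$ close to $h$ on $A$ and negligible off $A$, whence $\sum_i\varepsilon_ia_ih'(m_i)\ge\delta/2-\text{(error)}$ and therefore $\sup_g\big|\sum_i\varepsilon_ia_ig(m_i)\big|\ge C(\delta)$.

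\emph{The main obstacle.} The delicate point is that the stabilization in Step 2 cannot be performed uniformly in $\delta$. Indeed, if the ``close on $A$, negligible off $A$'' selection worked with a precision independent of $\delta$, the sign-splitting estimate would give $C(\delta)\ge c\delta$ with $c>0$ absolute, and letting the level approach $\big\|\sum a_nx_n\big\|$ would show the subsequence to be genuinely unconditional, contradicting the Maurey--Rosenthal example. Hence the extraction must be done level by level, with constants $C(\delta)$ tending to $0$ as $\delta\to0$; equivalently, whatever selection one can hope for in Step 2 must be allowed to degrade as $\delta$ shrinks. The crux of the proof is thus to pin down the correct $\delta$-calibrated version of the combinatorial principle — the precise coloring of finite subsets (or of the tree of norming functionals) whose stabilization still yields a positive $C(\delta)$ — and to prove it by the Ramsey/Stern machinery; by comparison, the dual reduction and the diagonalization over $\delta=1/k$ are soft. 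I expect this calibration, together with the bookkeeping needed to recover the functional $h'$ from the stabilized trace, to be where essentially all the work lies.
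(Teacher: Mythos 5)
Your Step~1 is correct and is exactly the standard reduction behind this theorem (which the paper itself does not prove: it quotes it from \cite{Arg} and records only the combinatorial principle, Theorem~\ref{th.AMT.comb.}, from which it is derived): the map $f\mapsto(f(x_n))_n$ carries $B_{X^*}$ onto a weakly compact subset $F$ of $B_{c_0}$ that norms the span of $(x_n)$, and a diagonalization over $\delta=1/k$ (after first passing to a basic subsequence so that heads and tails can be separated, as in the paper's Theorem~\ref{th.convex}) reduces everything to a fixed level $\delta$. The problem is that Step~2, where you yourself locate ``essentially all the work,'' is not carried out: the calibrated combinatorial principle is neither stated precisely nor proved, and the ``trace'' coloring is left unspecified. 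The missing statement is precisely Theorem~\ref{th.AMT.comb.}: for every $f\in F$ and every $I$ with $\min_{i\in I}f(m_i)>\delta$ there is $g\in F$ with $\min_{i\in I}g(m_i)>(1-\epsilon)\delta$ and $\sum_{i\le n,\,i\notin I}|g(m_i)|<\epsilon\delta$. Its proof via Ramsey's theorem, plus the bookkeeping that converts it into a bound $C(\delta)\gtrsim\delta^2$ (your set $A$ need not be one on which $h$ is uniformly large, so one must first pass to $A\cap\{i:|h(m_i)|>\delta/4\}$, which still carries mass $\ge\delta/4$ of the products $|a_ih(m_i)|$, and then split by the sign of $h$ before the principle applies), is the entire content of the theorem. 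As written, the proposal is a plan, not a proof.

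There is also a concrete false assertion in Step~2. A weakly compact $F\subseteq B_{c_0}$ need \emph{not} satisfy $\abs{\{i:|h(i)|>\eta\}}\le N(\eta)$ uniformly over $h\in F$: for fixed $\eta'>\eta$ the set $\bigl\{\eta'\sum_{i=k}^{2k}e_i : k\in\mathbb{N}\bigr\}\cup\{0\}$ is weakly compact in $c_0$ and violates any such bound. What weak compactness does give is that the family $\{\{i:|h(i)|\ge\eta\}:h\in F\}$ has pointwise closure consisting only of finite sets, with no uniform cardinality bound; this is exactly the hypothesis of Theorem~\ref{th.Rosenthal.comb.} and the reason a Ramsey-type hereditarization along a subsequence is unavoidable. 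Since your proposed coloring is organized around ``low-cardinality subsets $E\subseteq s$,'' it is not well founded as described and would have to be redesigned around compact families of finite sets rather than uniformly small ones.
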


As in the previous cases, the proof of this theorem is based on the
next combinatorial principle.

\begin{theorem}\label{th.AMT.comb.}
Let $F$ be a weakly compact subset of $c_0$ and let $\delta>0$ and
$\epsilon\in(0,1)$ be given. Then for every
$N\in[\mathbb{N}]^\omega$, there exists
$M=\{m_i\}_{i=1}^\infty\in[N]^\omega$ such that:

for every $f\in F$, $n\in\mathbb{N}$ and $I\subseteq \{
1,2,\ldots,n\}$ with $\min_{i\in I}f(m_i)>\delta$ there exists $g\in
F$ satisfying the conditions
\begin{enumerate}
\renewcommand{\theenumi}{\roman{enumi}}
    \item $\min_{i\in I}g(m_i)>(1-\epsilon)\delta$
    \item $\sum_{i\leq n, i\notin I}|g(m_i)|<\epsilon\delta$.
\end{enumerate}
\end{theorem}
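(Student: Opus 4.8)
The plan is to follow the pattern of the proofs of Theorems~\ref{th.Rosenthal.comb.} and~\ref{th.Elton.comb.}. After routine normalisations, one introduces for each admissible configuration an ``ideal witness'' living in a coordinate-restriction enlargement of $F$, and then one thins out $N$ by an iterated Ramsey-type argument so that $F$ itself, seen through the window $M$, reproduces those ideal witnesses to within $\epsilon$ and $\delta$.

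\emph{Reductions.} Relabelling, assume $N=\mathbb{N}$. Being weakly compact, $F$ is bounded and weakly closed (the weak topology being Hausdorff), hence norm closed; after rescaling $F$ and $\delta$ we may assume $F\subseteq B_{c_{0}}$, and since $\delta\ge 1$ makes the statement vacuous we may assume $0<\delta<1$. As $f\mapsto|f|$ and the interleaving map $f\mapsto\psi_{f}$ (with $\psi_{f}(2k-1)=f^{+}(k)$, $\psi_{f}(2k)=f^{-}(k)$) are weak-to-weak continuous, passing to the weakly compact non-negative family $\{\psi_{f}:f\in F\}$ reduces the theorem to the case where $F$ is non-negative (via routine bookkeeping of the underlying index set); we assume this, and for $\eta>0$ write $T_{\eta}(f)=\{k:\,f(k)>\eta\}$, a finite set because $f\in c_{0}$. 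For a finite set $A$ with $\min_{k\in A}f(k)>\delta$, the ``ideal witness'' is the coordinate restriction $f\mathbf{1}_{A}$: it satisfies~(i) and has zero $\ell_{1}$-mass off $A$. The entire task is to replace $f\mathbf{1}_{A}$ by a genuine member of $F$ that behaves like it on the relevant window.

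\emph{The combinatorial core.} For $\eta\in(0,\delta]$ consider $\mathcal{F}_{\eta}=\{T_{\eta}(f):f\in F\}\subseteq[\mathbb{N}]^{<\omega}$. Its closure $\overline{\mathcal{F}}_{\eta}$ in $2^{\mathbb{N}}$ contains only finite sets: if $T_{\eta}(f_{j})\to C$ pointwise, extract (by weak compactness and the Eberlein--Smulian theorem) a subsequence with $f_{j}\to f\in F$ coordinatewise; then every finite subset of $C$ is eventually contained in $T_{\eta}(f_{j})$, hence lies in the finite set $\{k:\,f(k)\ge\eta\}$, so $C$ is finite. Thus $\overline{\mathcal{F}}_{\eta}$ is a compact family of finite sets, and Rosenthal's combinatorial principle (Theorem~\ref{th.Rosenthal.comb.}) produces $M_{\eta}\in[N]^{\omega}$ with $\overline{\mathcal{F}}_{\eta}[M_{\eta}]$ hereditary. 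Performing this along a countable set of thresholds --- the ``large'' level $(1-\epsilon)\delta$ and the ``small'' levels $\epsilon\delta/(2m)$, $m\ge 1$ --- and diagonalising yields a single $M=\{m_{i}\}\in[N]^{\omega}$ hereditary at all of them simultaneously.

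\emph{Why $M$ works, and the main obstacle.} Fix $f\in F$, $n$, and $I\subseteq\{1,\dots,n\}$ with $\min_{i\in I}f(m_{i})>\delta$; put $A=\{m_{i}:i\in I\}$, $W=\{m_{1},\dots,m_{n}\}$, $m_{0}=\abs{W\setminus A}=n-\abs{I}$ (the case $m_{0}=0$ being trivial), and $\eta^{*}=\epsilon\delta/(2m_{0})$. It suffices to find $g\in F$ with $g>(1-\epsilon)\delta$ on $A$ and $g\le\eta^{*}$ on $W\setminus A$: the first is~(i), and the second gives $\sum_{i\le n,\,i\notin I}g(m_{i})\le\eta^{*}m_{0}=\tfrac12\epsilon\delta$, which is~(ii). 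Hereditariness at a single level $\eta\le\delta$ does produce some $h\in F$ with $T_{\eta}(h)\cap W=A$, i.e.\ $h>\eta$ on $A$ and $h\le\eta$ on $W\setminus A$: since $A\subseteq T_{\eta}(f)$, the set $A$ lies inside the member $T_{\eta}(f)\cap M$ of the hereditary family $\overline{\mathcal{F}}_{\eta}[M]$, so $A=C\cap M$ for some $C\in\overline{\mathcal{F}}_{\eta}$; writing $C=\lim_{r}T_{\eta}(h_{r})$ with $h_{r}\in F$, any $h_{r}$ with $r$ large works on the finite window $W$. But the choice $\eta=(1-\epsilon)\delta$ gives only~(i) (together with the useless $h\le(1-\epsilon)\delta$ off $A$), while $\eta=\eta^{*}$ gives only the smallness off $A$ (with no lower bound on $A$): a single level never delivers both. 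Getting the \emph{same} $g$ to be $>(1-\epsilon)\delta$ on $A$ \emph{and} $\le\eta^{*}$ on $W\setminus A$ means controlling two level sets of $g$ at once, i.e.\ a hereditariness statement for the nested pairs $\bigl(T_{(1-\epsilon)\delta}(f),T_{\eta^{*}}(f)\bigr)$, $f\in F$, uniformly over the values $\eta^{*}=\epsilon\delta/(2m_{0})$. This two-sided, multi-threshold strengthening of Theorem~\ref{th.Rosenthal.comb.} --- obtainable either by coding nested pairs as finite sets on a doubled index set and re-running Rosenthal's argument, or, as in the original proof in \cite{Arg}, by applying the infinite Ramsey theorem directly with a colouring recording a finite approximation of the least $\ell_{1}$-mass off $s$ among members of $F$ that are $\delta$-large on $s$ --- is the main obstacle. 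In either route, weak compactness of $F$ is what excludes the ``bad'' alternative at each step: a bad configuration would, via Eberlein--Smulian, produce in the norm-closed $F$ a function that is $\delta$-large on prescribed coordinates of $M$ yet carries a fixed positive $\ell_{1}$-mass on the window which no member of $F$ can dislodge, contradicting the compactness of the level-set families $\overline{\mathcal{F}}_{\eta}$ established above.
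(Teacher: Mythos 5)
First, a point of orientation: the paper does not prove Theorem~\ref{th.AMT.comb.} at all --- it is quoted from \cite{Arg} as a known combinatorial principle on which Theorem~\ref{th.AMT} rests. So there is no in-paper argument to compare yours against, and your proposal has to stand on its own.

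It does not yet stand. Your reductions (normalising $F$ into the ball, the interleaving map to a non-negative family, the fact that each $\overline{\mathcal{F}}_{\eta}$ is a compact family of finite sets via Eberlein--Smulian) are all fine, and you correctly diagnose that hereditariness of a single level-set family $\overline{\mathcal{F}}_{\eta}[M]$ can never deliver (i) and (ii) for the same $g$. But that simultaneous control \emph{is} the theorem, and you leave it as ``the main obstacle,'' gesturing at two possible fixes without carrying either out. Worse, the fix you sketch in most detail does not obviously close: the small threshold $\eta^{*}=\epsilon\delta/(2m_{0})$ depends on $m_{0}=n-\abs{I}$, which is unbounded, so you need a nested-pair hereditariness statement at \emph{infinitely many} thresholds simultaneously. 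A principle in the spirit of Theorem~\ref{th.Arvan.comb.} handles a fixed finite number $n$ of families; to cover all $m_{0}$ you would have to diagonalise over countably many sets $M_{m_{0}}$, and a diagonal $M$ is only almost contained in each $M_{m_{0}}$, whereas the hereditariness property is anchored at the initial window $\{m_{1},\dots,m_{q}\}$ --- you give no argument that it survives the finite discrepancy. The alternative route you name (the colouring in \cite{Arg} recording the least $\ell_{1}$-mass off $I$ among members of $F$ that are $\delta$-large on $I$, with weak compactness excluding the bad Ramsey alternative) is indeed how the theorem is actually proved, but invoking it by name is not a proof: the choice of colouring, the stabilisation argument, and the extraction of $g$ from the ``good'' alternative are the entire content here and are absent. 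As written, this is an honest and well-organised plan with the central step missing.
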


Finally, the combinatorial proof of Rosenthal's theorem has been
expanded and some stronger results have been obtained. As pointed
out in \cite{Arv}, Theorem \ref{th.Rosenthal} can not be extended in
the case where the range of $f_n$ is a finite set of arbitrarily
large cardinality. However, A. D. Arvanitakis \cite{Arv} expanded
this theorem in the case where the cardinality of the range of $f_n$
is finite and uniformly bounded by some positive integer.

\begin{theorem}\label{th.Arvan.1}
Let $K$ be a Hausdorff compact space, $X$ a Banach space and
$(f_n)_{n\in\mathbb{N}}$, $f_n:K\to X$, a normalized sequence of
continuous functions. We assume that $(f_n)_{n\in\mathbb{N}}$
converges pointwise to zero and that the range of $f_n$'s is of
finite cardinality uniformly bounded by some positive integer $J$.
Then $(f_n)_{n\in\mathbb{N}}$ contains an unconditional subsequence.
\end{theorem}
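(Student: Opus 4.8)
I would reduce Theorem~\ref{th.Arvan.1} to Rosenthal's combinatorial principle, Theorem~\ref{th.Rosenthal.comb.} --- more precisely, to a ``block'' strengthening of it --- applied to a compact family of finite sets built from the level sets of the $f_n$. First I carry out two routine reductions. Normalizing, assume $\|f_n\|=\sup_{k\in K}\|f_n(k)\|_X=1$. Since $f_n$ is continuous with finite range and $K$ is compact, its level sets are clopen; write $f_n=\sum_{i=1}^{r_n}v^n_i\chi_{A^n_i}$, where $v^n_1,\dots,v^n_{r_n}$ are the distinct non-zero values, ordered so that $\|v^n_1\|\ge\cdots\ge\|v^n_{r_n}\|$ (hence $\|v^n_1\|=1$), the $A^n_i=f_n^{-1}(v^n_i)$ are pairwise disjoint clopen sets, and $r_n\le J$. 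Passing to a subsequence I may assume $r_n\equiv r$, and then, by Bolzano--Weierstrass and a diagonal argument, that $\|v^n_i\|\to c_i$ for each $i\le r$, with $1=c_1\ge\cdots\ge c_r\ge0$. Put $\tilde f_n=\sum_{i:\,c_i>0}v^n_i\chi_{A^n_i}$; then $\|f_n-\tilde f_n\|=\max_{i:\,c_i=0}\|v^n_i\|\to0$, while each $\tilde f_n$ is still normalized and still converges pointwise to $0$, and for all large $n$ every non-zero value of $\tilde f_n$ has norm $\ge\rho:=\tfrac12\min\{c_i:c_i>0\}>0$. By the standard perturbation argument it suffices to find an unconditional subsequence of $(\tilde f_n)$: once we have one, with unconditionality constant $C$, a further thinning making $\sum\|f_n-\tilde f_n\|$ small enough relative to $C$ transfers unconditionality to $(f_n)$. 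So from now on assume that every non-zero value of every $f_n$ has norm $\ge\rho>0$.

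\textbf{The compact family.} With this normalization, $f_n(k)\to0$ for each $k$ forces $\{n:f_n(k)\ne0\}$ to be finite for every $k$. Set $\Omega=\{(n,i):n\in\mathbb N,\ 1\le i\le r_n\}$, let $\pi:\Omega\to\mathbb N$ be the first-coordinate projection (so $\pi^{-1}(G)=\bigcup_{n\in G}(\{n\}\times\{1,\dots,r_n\})$ is finite for finite $G$), and define $\tau:K\to[\Omega]^{<\omega}$ by $\tau(k)=\{(n,i):k\in A^n_i\}$. Each $\tau(k)$ is finite, $\tau$ is continuous (the $A^n_i$ being clopen), so $\mathcal F:=\tau(K)$ is a compact family of finite subsets of the countable set $\Omega$. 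The point of the encoding is that for every finite $G\subseteq\mathbb N$ and all scalars $(a_n)$
\[
\Bigl\|\sum_{n\in G}a_nf_n\Bigr\|=\max_{A\in\mathcal F}\Bigl\|\sum_{(n,i)\in A\cap\pi^{-1}(G)}a_nv^n_i\Bigr\|_X ,
\]
since $\sum_{n\in G}a_nf_n(k)$ depends on $k$ only through $\tau(k)\cap\pi^{-1}(G)$ (the $A^n_i$ are disjoint, so at most one summand survives per $n$) and $\{A\cap\pi^{-1}(G):A\in\mathcal F\}$ is finite. Now suppose we can find an infinite $M\subseteq\mathbb N$ for which the \emph{column-saturated} restriction $\mathcal F[\pi^{-1}(M)]:=\{A\cap\pi^{-1}(M):A\in\mathcal F\}$ is hereditary. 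Then $(f_n)_{n\in M}$ is $1$-suppression-unconditional: given finite $G\subseteq F\subseteq M$ and scalars, pick $A_0\in\mathcal F$ attaining the maximum above for $G$; since $A_0\cap\pi^{-1}(G)\subseteq A_0\cap\pi^{-1}(M)\in\mathcal F[\pi^{-1}(M)]$, heredity yields $A_1\in\mathcal F$ with $A_1\cap\pi^{-1}(M)=A_0\cap\pi^{-1}(G)$, whence $A_1\cap\pi^{-1}(F)=A_0\cap\pi^{-1}(G)$ and therefore $\|\sum_{n\in G}a_nf_n\|=\|\sum_{(n,i)\in A_1\cap\pi^{-1}(F)}a_nv^n_i\|\le\|\sum_{n\in F}a_nf_n\|$. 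A $1$-suppression-unconditional sequence of non-zero vectors is $2$-unconditional (and basic), so this finishes the argument.

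\textbf{The combinatorial core.} Everything thus reduces to finding an infinite $M\subseteq\mathbb N$ with $\mathcal F[\pi^{-1}(M)]$ hereditary. If $\Omega$ were $\mathbb N$ and $\pi$ the identity, this would be precisely Theorem~\ref{th.Rosenthal.comb.}. In general what is needed is the following block version: if $\Omega=\bigsqcup_{n}\Omega_n$ with each $\Omega_n$ a non-empty finite set and $\mathcal F$ is a compact family of finite subsets of $\Omega$, then there is an infinite $M\subseteq\mathbb N$ such that $\{A\cap\bigcup_{n\in M}\Omega_n:A\in\mathcal F\}$ is hereditary. I would prove this by re-running the (infinite Ramsey / Galvin--Prikry) proof of Theorem~\ref{th.Rosenthal.comb.} with $\mathbb N$ replaced by the countable set $\Omega$, but carrying out the recursive construction of the final infinite set \emph{block by block} --- at each stage deciding whether to keep or discard the whole block $\Omega_n$ --- so that the set produced is automatically of the form $\bigcup_{n\in M}\Omega_n$; compactness of $\mathcal F$ in $2^{\Omega}$ is unaffected, and the colourings involved descend to colourings of block-sequences. (In our application $|\Omega_n|=r_n\le J$, though the bound is not essential for the block version.)

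\textbf{The main obstacle.} The previous paragraph is the crux. The naive approach --- apply Theorem~\ref{th.Rosenthal.comb.} directly to $\mathcal F$ on the ground set $\Omega$ to get some infinite $M'\subseteq\Omega$ with $\mathcal F[M']$ hereditary and then set $M=\pi(M')$ --- fails, because $M'$ may contain no full block $\Omega_n$ whatsoever (it can select a single point from each block it meets), and then the column-saturation exploited above is lost. An induction on $J$ that peels off one ``layer'' $\mathbb N\times\{j\}$ at a time also does not close: arranging that the restriction of $\mathcal F$ to each single layer is hereditary does not make $\mathcal F$ itself hereditary, since a family of subsets of a product need not be closed under restriction to sub-rectangles just because each of its one-dimensional slices is. So one must run the Ramsey construction directly on blocks; checking that it goes through and outputs the required column-saturated $M$ is the one genuinely technical point, the rest being routine.
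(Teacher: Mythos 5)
Your reductions (clopen level sets, passing to constant $r$ and to values bounded below by $\rho$, the perturbation argument, the norm identity $\|\sum_{n\in G}a_nf_n\|=\max_{A\in\mathcal F}\|\sum_{(n,i)\in A\cap\pi^{-1}(G)}a_nv^n_i\|$, and the deduction of $1$-suppression unconditionality from column-saturated heredity) are all sound. The problem is the combinatorial core, which you yourself flag as the crux and do not prove: the block version of Theorem~\ref{th.Rosenthal.comb.} \emph{as you state it is false}. Take $\Omega_n=\{a_n,b_n\}$ and $\mathcal F=\{\{a_n,b_n\}:n\in\mathbb N\}\cup\{\emptyset\}$. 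This is a compact family of finite subsets of $\Omega$, but for every infinite $M$ and every $n\in M$ the set $\{a_n,b_n\}$ lies in $\mathcal F[\pi^{-1}(M)]$ while $\{a_n\}$ does not, so $\mathcal F[\pi^{-1}(M)]$ is never hereditary. Hence the assertion that the Galvin--Prikry argument ``goes through block by block'' cannot be right at this level of generality. In your application $\mathcal F=\tau(K)$ consists of partial selectors (at most one point per block, since the $A^n_i$ are disjoint for fixed $n$), which dodges this particular counterexample, but you neither isolate that hypothesis nor give any argument for the restricted statement; and the restricted statement is exactly what would need a real proof.

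The reason to be suspicious of even the partial-selector version is visible in the paper itself. The paper does not prove Theorem~\ref{th.Arvan.1}; it quotes it from \cite{Arv}, where it is derived from Theorem~\ref{th.Arvan.comb.}. That principle is strictly weaker than what you need: it only lets you prescribe a subset $A$ of a \emph{single} layer $F^i_k\cap\{m_1,\dots,m_q\}$ and then forces the other layers to vanish on $\{m_1,\dots,m_q\}$; it does not let you prescribe an arbitrary multi-layer subset of $F^i\cap\pi^{-1}(M)$ layer by layer, which is what full heredity of $\mathcal F[\pi^{-1}(M)]$ amounts to. Correspondingly, the derivation of Theorem~\ref{th.Arvan.1} from Theorem~\ref{th.Arvan.comb.} pays a pigeonhole factor of $J$ (choose the layer carrying at least $1/J$ of the mass, then kill the rest), yielding a suppression constant of order $J$, whereas your scheme would yield suppression constant $1$ independently of $J$. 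So your proposal silently replaces the known combinatorial principle by a genuinely stronger, unproved one. To close the gap you must either prove the column-saturated hereditary statement for compact families of partial selectors (and I see no reason the block-by-block Ramsey construction delivers it --- within a kept block the heredity requirement mixes the roles of the different values), or retreat to the single-layer principle of Theorem~\ref{th.Arvan.comb.} and insert the pigeonhole step, which is the route the cited source actually takes.
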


The following result, also proved in \cite{Arv}, concerns the case
where the space $X$ in the above theorem is finite dimensional.

\begin{theorem}\label{th.Arvan.2}
Let $K$ be a Hausdorff compact space and $(f_n)_{n\in\mathbb{N}}$,
$f_n:K\to \mathbb{R}^m$, a uniformly bounded sequence of continuous
functions which converges pointwise to zero. We also assume that
there are a null sequence $(\epsilon_n)_{n\in\mathbb{N}}$ of
positive numbers and a positive real number $\mu$ such that for
every $n\in\mathbb{N}$ and any $x\in K$ either $\|f_n(x)\|\leq
\epsilon_n$ or $\|f_n(x)\|\geq \mu$. Then $(f_n)_{n\in\mathbb{N}}$
contains an unconditional subsequence.
\end{theorem}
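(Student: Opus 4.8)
The plan is to run the combinatorial scheme behind Theorems~\ref{th.Rosenthal} and~\ref{th.Rosenthal.comb.}, with the gap hypothesis playing the role of the functions being characteristic and the finite dimension $m$ keeping a relevant piece of data finite. Since all norms on $\mathbb{R}^m$ are equivalent (this affects only the final constant) we may assume $\mathbb{R}^m$ is Euclidean and $\|f_n\|_\infty=1$ for all $n$. Fix a small $\delta_0=\delta_0(\mu,m)>0$ (how small is dictated by the constants below); since $\epsilon_n\to0$ we may pass to a subsequence and relabel so that $\epsilon_n<\mu$ for every $n$ and $\sum_n\epsilon_n<\delta_0$. For each $n$ the set $A_n=\{x\in K:\|f_n(x)\|\ge\mu\}$ is closed, and by the gap hypothesis its complement is $\{x:\|f_n(x)\|\le\epsilon_n\}$, also closed; hence every $A_n$ is clopen, and $A_n\neq\emptyset$ because $\|f_n\|_\infty=1\ge\mu$. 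Pointwise convergence $f_n\to0$ gives, for each $x$, that $\|f_n(x)\|<\mu$ and so $x\notin A_n$ for all large $n$; thus $S(x):=\{n:x\in A_n\}$ is finite for every $x$.

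Next I would record, besides the ``support'' $S(x)$, the sign pattern and an approximately dominant coordinate of each $f_n(x)$. Write $O_\sigma=\{v\in\mathbb{R}^m:\sigma_jv_j\ge0\ (1\le j\le m)\}$ for the closed orthant determined by $\sigma\in\{-1,1\}^m$, and put $R_{\sigma,i}=O_\sigma\cap\{v:\mu\le\|v\|\le1,\ |v_i|\ge\|v\|/\sqrt m\}$; each $R_{\sigma,i}$ is closed, and since every $v$ with $\mu\le\|v\|\le1$ lies in some closed orthant and satisfies $v_i^2\ge\|v\|^2/m$ for some $i$, the sets $R_{\sigma,i}$ cover the annulus $\{\mu\le\|v\|\le1\}$. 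Let $\Lambda=\{0\}\cup(\{-1,1\}^m\times\{1,\dots,m\})$ and, for $x\in K$, define $c_x:\mathbb{N}\to\Lambda$ by $c_x(n)=0$ for $n\notin S(x)$ and $c_x(n)=(\sigma,i)$, a chosen pair with $f_n(x)\in R_{\sigma,i}$, for $n\in S(x)$; then $\operatorname{supp}c_x=S(x)$. The soft point is that if $\bar c$ belongs to the closure $\mathcal C$ of $\{c_x:x\in K\}$ in $\Lambda^{\mathbb N}$, then, choosing a net $c_{x_\alpha}\to\bar c$ with $x_\alpha\to y$ for some $y\in K$ (compactness of $K$) and using that convergence in the discrete set $\Lambda$ is eventual equality, one finds $y\in K$ with $f_n(y)\in R_{\bar c(n)}$ and $\|f_n(y)\|\ge\mu$ whenever $\bar c(n)\neq0$, and $\|f_n(y)\|\le\epsilon_n$ whenever $\bar c(n)=0$; in particular every member of $\mathcal C$ is finitely supported, and the family of graphs $\{(n,\bar c(n)):\bar c(n)\ne0\}$, $\bar c\in\mathcal C$, is a compact family of finite subsets of a countable set. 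The finitely-colored version of Theorem~\ref{th.Rosenthal.comb.} (proved by the same infinite-Ramsey argument; cf.\ \cite{Arv}) then yields $M\in[\mathbb N]^\omega$ such that $\mathcal C[M]=\{\bar c|_M:\bar c\in\mathcal C\}$ is hereditary, meaning closed under replacing coordinate values by $0$.

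It remains to check that $(f_n)_{n\in M}$ is suppression-$C$-unconditional for a constant $C=C(\mu,m)$, i.e.\ $\|\sum_{n\in F}a_nf_n\|_\infty\le C\|\sum_{n\in G}a_nf_n\|_\infty$ for finite $F\subseteq G\subseteq M$ and scalars with $\max_{n\in G}|a_n|=1$. Realizing the one-point pattern at a coordinate $n_0\in G$ with $|a_{n_0}|=1$ gives a point $y_0$ showing the right-hand side is $\ge\mu-\delta_0\ge\mu/2$, so we may assume the left-hand side exceeds $\mu/2$. Pick $x^*\in K$ attaining it; with $P=F\cap S(x^*)$, the terms with $n\notin S(x^*)$ contribute less than $\delta_0$, so $\|\sum_{n\in P}a_nf_n(x^*)\|>\mu/4$. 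Split $P$ into at most $K:=2^{m+1}m$ pieces according to $(c_{x^*}(n),\operatorname{sgn}a_n)$ and choose a piece $Q$ with $\|\sum_{n\in Q}a_nf_n(x^*)\|\ge\frac1K\|\sum_{n\in P}a_nf_n(x^*)\|$, of common color $(\sigma^*,i^*)$ and sign $\rho^*$. On $Q$ all $a_nf_n(x^*)$ lie in the closed orthant $O_{\rho^*\sigma^*}$, so no cancellation occurs coordinatewise and, as $\|f_n\|_\infty=1$, $\|\sum_{n\in Q}a_nf_n(x^*)\|\le\sqrt m\sum_{n\in Q}|a_n|$. Since $c_{x^*}\equiv(\sigma^*,i^*)$ on $Q\subseteq S(x^*)\cap M$, hereditariness provides $y\in K$ realizing the pattern equal to $(\sigma^*,i^*)$ on $Q$ and $0$ on $M\setminus Q$; there, again by absence of coordinatewise cancellation and using $|f_n(y)_{i^*}|\ge\mu/\sqrt m$ for $n\in Q$, one gets $\|\sum_{n\in G}a_nf_n\|_\infty\ge\|\sum_{n\in Q}a_nf_n(y)\|-\delta_0\ge\frac{\mu}{\sqrt m}\sum_{n\in Q}|a_n|-\delta_0$. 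Combining these estimates and using $\|\sum_{n\in G}a_nf_n\|_\infty\ge\mu/2$ to absorb the $\delta_0$'s gives the required inequality with $C=C(\mu,m)$, provided $\delta_0$ was chosen small enough; hence $(f_n)_{n\in M}$ is an unconditional basic subsequence, which proves the theorem.

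The main obstacle I anticipate is reconciling the combinatorial thinning with the geometry of $\mathbb R^m$: Rosenthal-type thinning only lets one \emph{switch off} coordinates, so the directional information about the $f_n(x)$ must be carried along as a finite coloring, and this coloring has to be chosen with closed color-regions — so that a combinatorially constructed pattern is realized by an \emph{honest} point of $K$, via compactness and continuity — and yet fine enough (sign pattern plus a near-dominant coordinate) to preclude cancellation in the supremum norm among terms of the same color. The accumulated $\epsilon_n$-errors are a comparatively minor nuisance, disposed of by thinning at the outset so that $\sum_n\epsilon_n$ is tiny next to $\mu$ and the dimension-dependent combinatorial constant.
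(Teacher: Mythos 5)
The paper itself gives no proof of this theorem --- it is quoted from Arvanitakis \cite{Arv} with the remark that it is derived from the combinatorial principle recorded as Theorem~\ref{th.Arvan.comb.} --- and your argument is a correct reconstruction of exactly that derivation: colouring each $f_n(x)$ on its ``large'' set by orthant and near-dominant coordinate, realizing limit patterns by honest points of $K$ via compactness and closedness of the colour regions, thinning by the coloured Ramsey principle, and then extracting a monochromatic piece $Q$ on which coordinate $i^*$ sees no cancellation. One caveat: the combinatorial lemma as you state it (``$\mathcal C[M]$ is closed under replacing coordinate values by $0$'', i.e.\ arbitrary deletions retaining all remaining colours) is stronger than what Theorem~\ref{th.Arvan.comb.} asserts and than what the Ramsey argument of \cite{Arv} delivers; however, the only instances you actually invoke --- pass to a subset of a \emph{single} colour class and annihilate every other coordinate on an initial segment of $M$ --- are precisely the conclusion of Theorem~\ref{th.Arvan.comb.}, so the proof goes through once the lemma is stated in that weaker form.
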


The above theorems are derived by the following combinatorial
principle (see also \cite{Arv}) extending Theorem
\ref{th.Rosenthal.comb.}.

\begin{theorem}\label{th.Arvan.comb.}
Assume that $I$ is a set, $n$ a positive integer and for any $i\in
I$, $F_1^i,\ldots ,F_n^i$ are finite subsets of $\mathbb{N}$ such
that setting $F^i=\cup_{k=1}^nF_k^i$, the closure of the family
$\mathcal{F}=\{F^i\mid i\in I\}$ in the pointwise topology contains
only finite sets. Then for any $N\in[\mathbb{N}]^\omega$, there
exists $M=\{m_1<m_2<\ldots\}\in[N]^\omega$ such that the following
holds:

given $i\in I$, $q\in \mathbb{N}$, $k\in\{1,\ldots,n\}$ and
$A\subseteq F_k^i\cap\{m_1,\ldots,m_q\}$ then there exists $i'\in I$
such that $F_k^{i'}\cap\{m_1,\ldots,m_q\}=A$ and
$F_{k'}^{i'}\cap\{m_1,\ldots,m_q\}\subseteq A$ for any $k'\neq k$.
\end{theorem}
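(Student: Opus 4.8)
\emph{Step 1: reduction to a compact family, and a shrinking‑stable reformulation.} My plan is to pass to closures, rephrase the conclusion in a form stable under shrinking, reduce the number of sets from $n$ to $2$, and settle the case $n=2$ by the argument behind Theorem~\ref{th.Rosenthal.comb.} run on pairs. First I would replace $I$ by the closure $\mathcal T$ of $\{(\mathbf 1_{F_1^i},\dots,\mathbf 1_{F_n^i}):i\in I\}$ inside $(\{0,1\}^{\mathbb N})^n$. Since $(A_1,\dots,A_n)\mapsto\bigcup_k A_k$ is continuous and the closure of $\{F^i\}$ contains only finite sets, $\mathcal T$ is a compact family of $n$-tuples of finite sets; and since the asserted conclusion constrains only the coordinates lying in $\{m_1,\dots,m_q\}$ while convergence in a product of Cantor cubes is coordinatewise, any witness found in $\mathcal T$ can be traded for a genuine $i'\in I$. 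So it suffices to treat $\mathcal T$. I would then record the equivalent \emph{full‑trace} form: $M$ works iff for every $t$, every $k$, and every finite $A\subseteq F_k^t\cap M$ there is $t'$ with $F_k^{t'}\cap M=A$ and $F_{k'}^{t'}\cap M\subseteq A$ for all $k'\neq k$. For compact $\mathcal T$ this is equivalent to the stated form (intersect witnesses with $\{m_1,\dots,m_q\}$ in one direction; take a coordinatewise limit of witnesses as $q\to\infty$ in the other), and — the point that matters below — it passes to every infinite subset of $M$, which the initial‑segment form does not.

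\emph{Step 2: from $n$ sets down to two.} For a fixed $k$ I would view $\bigl(F_k^t,\ \bigcup_{j\neq k}F_j^t\bigr)$ as a compact family of pairs of finite sets (a continuous image of $\mathcal T$) whose union family is still $\{F^t\}$, and apply the case $n=2$ with the first coordinate privileged. This yields an infinite set over which, for each $t$ and each $A\subseteq F_k^t\cap\{m_1,\dots,m_q\}$, some $t'$ satisfies $F_k^{t'}\cap\{m_1,\dots,m_q\}=A$ and $\bigl(\bigcup_{j\neq k}F_j^{t'}\bigr)\cap\{m_1,\dots,m_q\}\subseteq A$, i.e.\ $F_j^{t'}\cap\{m_1,\dots,m_q\}\subseteq A$ for every $j\neq k$ — exactly the clause for the index $k$. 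Running this for $k=1,\dots,n$ along a decreasing chain $N\supseteq M_1\supseteq\dots\supseteq M_n=:M$, and invoking the subset‑inheritance of the full‑trace form to keep the clauses already obtained, the set $M$ satisfies all $n$ clauses. (The degenerate pair $\bigl(F_1^t,\varnothing\bigr)$ shows $n=1$ is nothing but Theorem~\ref{th.Rosenthal.comb.}.)

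\emph{Step 3: the case $n=2$.} Here one is handed a compact family of pairs $(A^i,B^i)$ of finite sets with $\{A^i\cup B^i\}$ compact, and must find $M\in[N]^\omega$ so that every finite $A\subseteq A^i\cap M$ equals $A^{i'}\cap M$ for some $i'$ with $B^{i'}\cap M\subseteq A$. Replacing $B^i$ by $B^i\setminus A^i$ (which only tightens the conclusion) I would assume $A^i\cap B^i=\varnothing$, so the goal becomes: realise $A^{i'}\cap M=A$ together with $B^{i'}\cap M=\varnothing$. I would then rerun the argument proving Theorem~\ref{th.Rosenthal.comb.} on the pairs: build $M=\{m_1<m_2<\dots\}$ recursively so that at stage $q$ every $A\subseteq\{m_1,\dots,m_q\}$ realised on the $A$-side is realised by a pair whose $B$-part already avoids $\{m_1,\dots,m_q\}$; at each stage only finitely many such requirements are live, compactness makes them jointly satisfiable once $m_{q+1}\in N$ is chosen large enough, and the rank of the residual family (what survives after committing the new coordinate) strictly drops, so the recursion closes.

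\emph{Where the difficulty lies.} The whole weight of the proof is on Step 3, and its essence is that the two coordinates must be controlled \emph{simultaneously}. One cannot simply quote Theorem~\ref{th.Rosenthal.comb.}: pinning down $A^{i'}\cap M$ by Rosenthal's lemma carries no information about $B^{i'}$, and the two one‑coordinate statements cannot be merged — an $i'$ realising $A^{i'}\cap M=A$ and an $i''$ realising $B^{i''}\cap M=\varnothing$ need have no common refinement. Encoding the pair on $\mathbb N\times\{0,1\}$ and applying the lemma there also fails, since the infinite set it returns need not meet both fibres over the same integers. So the partition $A^i\cup B^i$ must be threaded through the entire rank argument, and the genuine work is the bookkeeping, uniform in $q$, that keeps for every admissible trace on $\{m_1,\dots,m_q\}$ a witness with clean $B$-part — in effect a $4$-coloured strengthening of the combinatorial lemma in place of the $2$-coloured Theorem~\ref{th.Rosenthal.comb.}.
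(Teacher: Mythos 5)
A preliminary remark: the paper contains no proof of Theorem~\ref{th.Arvan.comb.}; it is stated as a known result of Arvanitakis (``see also \cite{Arv}'') and is only invoked as a black box, to rule out alternative (b) in the proof of Theorem~\ref{th.Tree-Arvan-Comb.}. So there is nothing in the paper to compare your argument against, and I can only judge it on its own terms. Your Steps 1 and 2 are correct and essentially routine: the passage to the compact closure of the tuple family, the equivalence (via compactness) of the initial-segment and full-trace formulations, the inheritance of the full-trace form by infinite subsets of $M$, and the reduction from $n$ colours to two by privileging $F_k^i$ against $\bigcup_{j\neq k}F_j^i$ and intersecting $n$ successive refinements all check out.

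The gap is Step 3, which you rightly say carries the whole weight of the theorem but which you do not actually prove; worse, the one concrete mechanism you do describe is false. You propose to build $M$ greedily, choosing $m_{q+1}$ ``large enough'' that the finitely many live requirements remain jointly satisfiable. Choosing $m_{q+1}$ beyond the supports of the retained witnesses does preserve the stage-$q$ requirements, but the \emph{new} requirements at stage $q+1$ concern traces $A$ containing $m_{q+1}$, and whether these admit a witness with clean $B$-part depends on the elements $m_1,\dots,m_q$ already committed, not on the size of $m_{q+1}$. Concretely, take the closed family of disjoint pairs $\{(\{n\},\{1\}): n\ge 2\}\cup\{(\emptyset,\{1\}),(\emptyset,\emptyset)\}$, whose union family $\{\{1,n\}: n\ge 2\}\cup\{\{1\},\emptyset\}$ is compact and consists of finite sets. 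If the recursion ever admits $m_1=1$, then for any later $m_j=n_0\ge 2$ the trace $A=\{n_0\}$ is realised on the $A$-side only by the pair $(\{n_0\},\{1\})$, whose $B$-part meets $\{m_1\}$; no witness exists, and no choice of subsequent elements, however large, repairs this (every valid $M$ for this family must avoid $1$ altogether). Thus the recursion cannot be closed by ``large enough'' choices plus an unspecified appeal to a dropping rank: the rank induction has to be run \emph{before} each element is admitted, steering the reservoir of future candidates away from bad positions, and it is precisely inside that induction that the interaction between the $A$-side and the $B$-side --- which you correctly identify as the genuine difficulty --- must be resolved. As written, the core lemma (your $n=2$ case) is asserted rather than proved, so the argument is incomplete.
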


Throughout this paper $S$ denotes the standard dyadic tree, that is
the set $S=\cup_{n=0}^\infty\{0,1\}^n$ of all finite sequences in
$\left\{0,1\right\}$, including the empty sequence denoted by
$\emptyset$. The elements $s\in S$ are called \emph{nodes}. If $s$
is a node and $s\in \left\{0,1\right\}^{n}$, we say that $s$ is on
the \emph{$n$-th level of $S$}. We denote the level of a node $s$ by
$\text{lev}(s)$. The initial segment partial ordering on $S$ is
denoted by $\leq$ and we write $s<s'$ if $s\leq s'$ and $s\neq s'$.
If $s\leq s'$, we say $s'$ is a \emph{follower} of $s$ while if
$s,s'$ are nodes such that neither $s\leq s'$ nor $s'\leq s$ then
$s$ and $s'$ are called incomparable. We also say that the nodes
$s\cup\{0\}$ and $s\cup\{1\}$ are the \emph{successors} of the node
$s$.

A partially ordered set $T$ is called a \emph{dyadic tree} if it is
order isomorphic to $(S,\leq)$. A \emph{subtree} of $S$ is any
subset $S'$ of $S$ which has a single minimal element and any
element of $S'$ has exactly two successors. In the sequel we mean by
a tree always a dyadic tree. If $T$ is a tree, a \emph{chain} of $T$
is an infinite linearly ordered subset of $T$. Throughout this
paper, $\mathcal{C}(T)$ denotes the set of all chains of $T$.

The set $\mathcal{C}(T)$ is endowed with the relative topology of
the product topology of $\mathcal{P}(T)$. A sub-basis of this
topology consists of the sets $U(t)=\left\{\beta \in
\mathcal{C}(T)\mid t\in\beta \right\}$ and $V(t)=\left\{\beta \in
\mathcal{C}(T) \mid t\notin\beta \right\}$ where $t$ varies over the
elements of $T$. Clearly the sets $U(t),V(t)$ are open and closed.
It is also known (see \cite{Stern}) that $\mathcal{C}(T)$ is a
$G_{\delta}$ subset of $\mathcal{P}(T)$. Therefore (see
\cite{Kechris}) the topology of $\mathcal{C}(T)$ is induced by a
complete metric.

Considering on $\mathcal{C}(T)$ the topology described above, J.
Stern \cite{Stern} proved the following Ramsey-type theorem for the
dyadic tree. Furthermore, Stern (see also \cite{Stern}) applied his
combinatorial result in the theory of Banach spaces and extended
Rosenthal's $\ell_1$-theorem \cite{Rosenthal} to the case of
tree-families.

\begin{theorem} \label{th.Stern}
Let $T$ be a tree and let $\mathcal{A}\subseteq \mathcal{C}(T)$ be
an analytic set of chains. There exists a subtree $T'$ of $T$ such
that either $\mathcal{C}(T')\subseteq \mathcal{A}$ or
$\mathcal{C}(T') \cap \mathcal{A}= \emptyset$.
\end{theorem}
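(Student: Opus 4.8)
The plan is to reduce the analytic case to the open case by a tree of approximations, then prove the open case by a fusion/diagonalization argument that builds the subtree $T'$ level by level. First I would handle the case where $\mathcal{A}$ is \emph{open} in $\mathcal{C}(T)$. Since the sub-basic sets $U(t), V(t)$ are clopen, a basic open set is determined by finitely many membership/non-membership conditions on nodes, i.e.\ by a finite initial portion of a chain. One then builds $T'$ by recursion on the levels: having chosen the first $N$ levels of the candidate subtree as a finite subtree of $T$, one looks at the finitely many ``positions'' (nodes of the partial subtree) from which a chain may continue, and uses a pigeonhole/compactness step to extend into $T$ so as to decide, for each such position, whether every continuation will eventually land in $\mathcal{A}$. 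The key point is that $\mathcal{C}(T)$ with the described topology is (completely) metrizable, so openness of $\mathcal{A}$ means: if $\beta \in \mathcal{A}$ then some finite initial segment of $\beta$ already forces membership. Using this, a careful fusion produces $T'$ with $\mathcal{C}(T') \subseteq \mathcal{A}$ \emph{unless} at some stage we are forced to ``avoid'' $\mathcal{A}$, in which case the complementary branch of the construction gives a subtree with $\mathcal{C}(T') \cap \mathcal{A} = \emptyset$. Making this dichotomy clean is the first technical hurdle: one must argue that if no subtree with $\mathcal{C}(T') \subseteq \mathcal{A}$ can be built, then the ``bad'' set of nodes from which $\mathcal{A}$ cannot be forced is itself large enough to contain a full subtree, and on that subtree every chain avoids $\mathcal{A}$ (here one uses that a chain avoiding $\mathcal{A}$ is detected in the limit, $\mathcal{A}$ being open so its complement is closed).

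Next I would bootstrap from open to closed, and then to $\mathbf{\Sigma}^1_1$. For \emph{closed} $\mathcal{A}$: apply the open case to $\mathcal{C}(T) \setminus \mathcal{A}$ and swap the two alternatives. For the general \emph{analytic} case, I would use the classical representation of an analytic set as a projection of a closed set, or equivalently as the body of a tree of finite approximations: write $\mathcal{A} = \bigcup_{\sigma \in \mathbb{N}^{\mathbb{N}}} \bigcap_n A_{\sigma|n}$ with each $A_{\sigma|n}$ closed, or use a Souslin scheme $(A_s)_{s \in \mathbb{N}^{<\omega}}$ with $\mathcal{A} = \bigcup_\sigma \bigcap_n A_{\sigma|n}$. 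Then perform a second fusion: alternately refine the subtree (using the closed case at each finite stage to control $A_s$) and refine the choice of approximation indices, diagonalizing so that the resulting subtree $T'$ satisfies: every chain of $T'$ either has its whole membership in $\mathcal{A}$ decided positively by the scheme, or is pushed out of every $A_s$ at some finite level. Since $\mathcal{C}(T')$ is closed in $\mathcal{C}(T)$ and hence Polish, the complete metric lets the fusion converge, yielding an honest subtree rather than just a ``limit object''.

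The main obstacle, as in all such Ramsey-for-trees arguments, is the combinatorics of the fusion itself: unlike the case of $[\mathbb{N}]^\omega$, where one thins out an infinite set, here at each step one must commit to a finite subtree and cannot freely discard branches — every node must retain exactly two successors. Thus the pigeonhole step that decides ``forced into $\mathcal{A}$'' versus ``forced out of $\mathcal{A}$'' must be applied uniformly to all the (finitely many, at a given stage) incomparable extension positions, and one must ensure that the homogeneous choice is consistent across positions so that the recursion does not get stuck. Concretely, at stage $N$ one colors each minimal extension node $t$ of $T$ above the current partial subtree by whether the open set ``eventually in $\mathcal{A}$'' is dense below $t$; a two-successor analogue of the Halpern–Läuchli / Stern partition argument then selects, inside $T$, a new layer of nodes all of the same color, with two successors allocated to each active position. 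Iterating and taking the limit along the complete metric gives $T'$. I expect the bookkeeping to verify that the limiting $T'$ is genuinely order-isomorphic to $S$ (single minimal element, each node with exactly two successors) to be the most delicate routine part, while the conceptual heart is the open-set dichotomy combined with the analytic-to-closed reduction via a Souslin scheme.
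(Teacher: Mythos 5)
First, a point of reference: the paper does not prove Theorem \ref{th.Stern} at all. It is quoted from Stern's article and used as a black box, with the remark that Stern's original argument uses forcing and that Henson observed it can instead be derived from ``some significant results of Ramsey theory'' (in modern terms, the Halpern--L\"auchli/Milliken circle of ideas). So your proposal has to be measured against those known proofs rather than against anything in this paper.

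Your architecture --- open case by a level-by-level fusion, closed case by the symmetry of the dichotomy under complementation, analytic case by reduction --- is the standard one, and you correctly identify that the ``pigeonhole'' at each stage of the fusion is really the Halpern--L\"auchli theorem; granting that deep result as a black box, the open case can be pushed through. The genuine gap is in your analytic step. Writing $\mathcal{A}=\bigcup_{\sigma\in\mathbb{N}^{\mathbb{N}}}\bigcap_n A_{\sigma|n}$ and then ``performing a second fusion, diagonalizing over the approximation indices'' does not work as described: membership of a chain $\beta$ in $\mathcal{A}$ is witnessed by an infinite branch $\sigma$ that depends on $\beta$, the subtree being built has continuum many chains, and there is no countable list of closed conditions against which to diagonalize. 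At no finite stage can you certify that a chain of the eventual $T'$ is ``pushed out of every $A_s$'' (for each single chain this is a conjunction over all of $\mathbb{N}^{<\omega}$, to be coordinated across uncountably many chains), and the positive alternative requires constructing the witness $\sigma$ \emph{simultaneously} with $\beta$, which applying the closed dichotomy to each $A_s$ separately cannot achieve --- the Souslin operation does not commute with passing to subtrees. The known repairs are substantial: (a) an unfolding argument, i.e.\ prove a parametrized closed dichotomy in $\mathcal{C}(T)\times\mathbb{N}^{\mathbb{N}}$ in which the fusion also builds the witness; (b) the Ellentuck-style route, identifying the Ramsey subsets of the chain space with those having the Baire property in a suitable finer topology and invoking the classical fact that the Souslin operation preserves the Baire property; or (c) Stern's own forcing-plus-absoluteness argument. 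A related, smaller omission: even the Borel case does not follow from the open and closed cases alone; one needs a \emph{localized} version of the dichotomy (relative to a finite stem and a subtree) to close the Ramsey sets under countable unions, exactly as in the Galvin--Prikry argument. As written, your sketch establishes, modulo Halpern--L\"auchli, only the open/closed case.
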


Stern's initial proof uses forcing methods. However, C. Ward Henson
(see \cite{Odell}) observed that the above mentioned theorem follows
from some significant results of Ramsey theory.

In this paper, using Stern's theorem we prove in Section
\ref{sec.Combinatorial-Trees} some combinatorial principles for the
dyadic tree. These results combine combinatorial methods with
concepts and techniques coming from Analysis and extend Theorems
\ref{th.Rosenthal.comb.}, \ref{th.Elton.comb.}, \ref{th.AMT.comb.}
and \ref{th.Arvan.comb.} mentioned in this introduction.

In Sections \ref{sec.Continuous Functions}, \ref{sec.Nearly} and
\ref{sec.Convex}, we consider normalized families $(x_s)_{s\in S}$
of elements of a Banach space indexed by the dyadic tree $S$, such
that for any chain $\beta$ of $S$ the sequence $(x_s)_{s\in \beta}$
is weakly null. Our aim is to investigate the unconditional
behaviour of the sequences $(x_s)_{s\in \beta}$ for
$\beta\in\mathcal{C}(S)$. More precisely, in Section
\ref{sec.Continuous Functions}, we prove that in some special cases
there exists a subtree $T$ of $S$ such that for any chain $\beta$ of
$T$ the sequence $(x_s)_{s\in \beta}$ is unconditional. These
results extend Rosenthal's and Arvanitakis' theorems. In Section
\ref{sec.Nearly}, we show that there always exists a subtree $T$ of
$S$ such that all the sequences $(x_s)_{s\in \beta}$,
$\beta\in\mathcal{C}(T)$, are nearly unconditional. In Section
\ref{sec.Convex}, we also prove the existence of a subtree $T$ of
$S$ such that all the sequences $(x_s)_{s\in \beta}$,
$\beta\in\mathcal{C}(T)$, are convexly unconditional. These results
extend Theorems \ref{th.Elton} and \ref{th.AMT} respectively.

Finally, in Section \ref{sec.general} we consider the more general
case where $(x_s)_{s\in S}$ is a normalized tree-family such that
for any chain $\beta\in\mathcal{C}(S)$, $(x_s)_{s\in\beta}$ is a
Schauder basic sequence. In this framework we prove the following
dichotomy result (see Theorem \ref{th.general}): there always exists
a subtree $T$ of $S$ such that either (a)~for any chain
$\beta\in\mathcal{C}(T)$, $(x_s)_{s\in\beta}$ is semi-boundedly
complete, or (b)~for no chain $\beta\in\mathcal{C}(T)$,
$(x_s)_{s\in\beta}$ is semi-boundedly complete. Furthermore, if we
assume that $(x_s)_{s\in\beta}$ is weakly null for any chain
$\beta\in\mathcal{C}(S)$, then we can combine the above dichotomy
with the results of Section \ref{sec.Nearly} and we obtain the next
stronger result (see Theorem \ref{th.dichotomy-c0}): there always
exists a subtree $T$ of $S$ such that either (a)~for any chain
$\beta\in\mathcal{C}(T)$, $(x_s)_{s\in\beta}$ is semi-boundedly
complete, or (b)~for any chain $\beta\in\mathcal{C}(T)$,
$(x_s)_{s\in\beta}$ is $C$-equivalent to the unit vector basis of
$c_0$, where $C>0$ is a common constant. It is worth mentioning that
the proof of Theorem \ref{th.general} uses analytic sets. Actually,
this is the only point where we appeal to the full strength of
Stern's theorem. In all the other cases, the sets appearing in the
proofs are Borel sets.

In order to prove the main results of Sections \ref{sec.Continuous
Functions}, \ref{sec.Nearly} and \ref{sec.Convex}, we can rely on
the combinatorial principles of Section
\ref{sec.Combinatorial-Trees} and transfer the arguments of
\cite{Arv}, \cite{Elton} and \cite{Arg} respectively in the more
complicated setting of tree-families. However the proofs obtained
are quite long and technical and they will not be presented. Instead
our approach uses the corresponding results for sequences and
Stern's theorem and provides us with short proofs of the theorems
contained in this paper. Although we do not use the principles of
Section \ref{sec.Combinatorial-Trees}, we think that they are of
independent interest and they point out the underlying combinatorial
nature of the main results of this work.

In conclusion, the purpose of the present work is to extend
important results from sequences to tree-families. The essential
attitude lying in the core of the paper is that in general this
passage from sequences to tree-families is fundamental in Analysis,
Set Theory and Logic. This passage is not trivial and usually
requires new ideas and techniques. We further believe that the ideas
contained in our proofs can be applied in more general concepts.

\section{Some combinatorial principles for the dyadic
tree}\label{sec.Combinatorial-Trees} In the following $bcn(S)$
denotes the set of all functions $f:S\to\mathbb{R}$ such that $f$ is
bounded and for any chain $\beta$ of $S$ the sequence
$(f(s))_{s\in\beta}$ converges to zero. Clearly, $bcn(S)$ is a
linear subspace of the space $\ell_\infty(S)$ of bounded real
functions defined on $S$. Further, we consider on $bcn(S)$ the
topology of pontwise convergence, that is the relative topology of
the product topology of $\mathbb{R}^S$. Since $S$ is countable,
$bcn(S)$ is metrizable.

In this section, we first prove the following theorem which expands
Elton's combinatorial principle.

\begin{theorem}\label{th.Tree-Elton-Comb.}
Suppose that $F$ is a compact subset of $bcn(S)$, with $F\subseteq
B(\ell_\infty(S))$, where $B(\ell_\infty(S))=\{f:S\to\mathbb{R} \mid
\|f\|_\infty\leq 1\}$ and let $\delta>0$ and $\epsilon\in(0,1)$ be
given. Then there exists a subtree $T$ of $S$ which satisfies the
following property:

for any $\beta=(s_i)\in\mathcal{C}(T)$, any $f\in F$, any
$n\in\mathbb{N}$ and any $I\subseteq\{i\le n\mid f(s_i)>0\}$ with
$\sum_{i\in I}f(s_i)>\delta$, there exists $g\in F$ such that
\begin{enumerate}
\renewcommand{\theenumi}{\roman{enumi}}
\item  $\sum_{i\in I}g^+(s_i)>(1-\epsilon)\sum_{i\in I}f(s_i)$
[where $g^+=\max(g,0)$],
\item  $\sum_{i\in J}|g(s_i)|<\epsilon\sum_{i\in I}f(s_i)$,
where $J=\{i\le n \mid i\notin I$ or $g(s_i)<0\}$.
\end{enumerate}
\end{theorem}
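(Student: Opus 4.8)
The plan is to deduce Theorem \ref{th.Tree-Elton-Comb.} from the sequential version (Theorem \ref{th.Elton.comb.}) via Stern's theorem, avoiding any direct repetition of Elton's Ramsey argument. The key observation is that for a fixed chain $\beta$ the conclusion is exactly the statement of Elton's combinatorial principle applied to the restriction $F|_\beta = \{f|_\beta : f\in F\}$, which is a weakly compact subset of the unit ball of $c_0(\beta)$ (weak compactness of $F|_\beta$ follows because $F$ is pointwise compact, bounded, and every $f\in bcn(S)$ restricts to an element of $c_0$ on $\beta$; the restriction map is pointwise-to-pointwise continuous, and on a bounded subset of $c_0$ that is a weakly compact set the pointwise and weak topologies agree). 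So for each chain $\beta$ there \emph{exists} an infinite subset $M\subseteq\beta$ making the conclusion hold along $M$; the issue is to get a single subtree $T$ that works simultaneously for all its chains.

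First I would set up the relevant subset of $\mathcal{C}(S)$. Fix an enumeration of $S$ so that each chain $\beta$ comes with a canonical increasing enumeration $\beta=(s_i)_{i\ge 1}$, and let $\mathcal{A}$ be the set of all $\beta\in\mathcal{C}(S)$ such that the displayed property (with parameters $\delta,\epsilon$) holds for $\beta$ itself, i.e.\ for every $f\in F$, every $n$, and every admissible $I\subseteq\{i\le n: f(s_i)>0\}$ there is $g\in F$ satisfying (i) and (ii). The next step is to verify that $\mathcal{A}$ is Borel — in fact the quantifier over $f$ can be replaced by a quantifier over a fixed countable dense subset $F_0$ of $F$ (using continuity of the finitely many coordinate evaluations and of the sums involved, together with the strict inequalities), and then $\mathcal{A}$ is a countable intersection, over the finitely-supported data $(n,I)$ and over basic conditions on $\beta$, of countable unions of clopen conditions on $\beta$; hence Borel, a fortiori analytic.

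Now apply Stern's theorem (Theorem \ref{th.Stern}) to $\mathcal{A}$: there is a subtree $T'$ of $S$ with either $\mathcal{C}(T')\subseteq\mathcal{A}$ or $\mathcal{C}(T')\cap\mathcal{A}=\emptyset$. To finish I must rule out the second alternative. For this I pick any single chain $\gamma_0\in\mathcal{C}(T')$ and apply Theorem \ref{th.Elton.comb.} to $F|_{\gamma_0}\subseteq B(c_0)$ with the given $\delta,\epsilon$ and $N=\gamma_0$, obtaining $M=\{m_i\}\in[\gamma_0]^\omega$ along which Elton's conclusion holds. But $M$, re-indexed by its own increasing enumeration, is a chain of $S$ lying in $T'$ (any infinite linearly ordered subset of $T'$ is again a chain of $T'$), and by construction $M\in\mathcal{A}$; this contradicts the second alternative. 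Hence $\mathcal{C}(T')\subseteq\mathcal{A}$. Finally, replacing $T'$ by a subtree $T$ is unnecessary since $T'$ is already a subtree — so $T=T'$ works.

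The main obstacle I anticipate is the bookkeeping in the reduction of $\mathcal{A}$ to a Borel set and the care needed with the re-indexing: the displayed property is stated in terms of the \emph{enumeration} $(s_i)$ of the chain, so when I pass from $\beta$ to an infinite subset $M\subseteq\beta$ I am changing which index $i$ a given node carries, and I must check that Elton's conclusion for $F|_M$ under the enumeration of $M$ really does translate back into membership of $M$ in $\mathcal{A}$ under the enumeration of $M$ as a chain of $S$ — which it does, precisely because both refer to $M$'s own increasing enumeration. A secondary technical point is confirming that $F|_\beta$ is genuinely weakly compact in $c_0$ (not merely pointwise compact), which is where the hypothesis $F\subseteq bcn(S)$, rather than just $F$ pointwise compact with finite limit sets, is used.
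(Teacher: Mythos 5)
Your overall strategy is exactly the paper's: define the set $\mathcal{A}$ of chains satisfying the conclusion, show it is Borel, apply Stern's theorem, and exclude the alternative $\mathcal{C}(T')\cap\mathcal{A}=\emptyset$ by applying Theorem \ref{th.Elton.comb.} to the restriction of $F$ to a single chain and observing that the resulting infinite subset is again a chain of $T'$. Your extra care about why $F|_\beta$ is weakly compact in $c_0$ and about the re-indexing of $M$ is correct and is material the paper leaves implicit.

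There is, however, one step that does not work as you state it: the claim that the condition ``for every $f\in F$ there exists $g\in F$ with (i) and (ii)'' is \emph{equivalent}, for the same $\epsilon$, to the condition quantified over a countable dense $F_0$. The direction you need at the end (from $F_0$ back to $F$) fails with the same constant: given $f\in F$ and an approximant $f'\in F_0$, the admissible set $I$ for $f$ need not be admissible for $f'$ (coordinates where $f(s_i)$ is positive but tiny), and after discarding those coordinates the $g$ you obtain satisfies (i) and (ii) only up to an error of order $n\eta$, where $\eta$ is the approximation accuracy; since $g$ depends on $\eta$ and the inequalities are strict, you cannot pass to the limit. The paper's proof is structured precisely to absorb this: it defines $\mathcal{A}$ using $F_0$ and the tighter constant $\epsilon/2$ in (i) and (ii), which makes $\mathcal{A}$ manifestly a countable intersection of unions of clopen sets, and only at the very end uses density of $F_0$ to recover the statement for all of $F$ with the original $\epsilon$. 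Your argument is repaired by inserting this $\epsilon/2$ (or any $\epsilon'<\epsilon$) slack into the definition of $\mathcal{A}$; without it, neither the asserted equality of the two sets nor the final passage from $F_0$ to $F$ is justified.
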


Roughly speaking, for any chain $\beta$ of $T$ and any $f\in F$ we
find a function $g\in F$ which preserves the positive $\ell_1$ mass
of $f$ on the finite set $I$ and is very close to zero on the other
$s_i$'s. If we could find a function $g\in F$ such that
$g(s_i)=f(s_i)$ for $i\in I$ and $g(s_i)=0$ for $i\notin I$, $i\leq
n$, then it would follow (see \cite{Odell}) that for any normalized
family $(x_s)_{s\in S}$ so that $(x_s)_{s\in \beta}$ is weakly null
for any $\beta\in\mathcal{C}(S)$ there is a subtree $T$ of $S$ such
that $(x_s)_{s\in \beta}$ is unconditional for any
$\beta\in\mathcal{C}(T)$, which of course is not true.

\begin{proof}[Proof of Theorem \ref{th.Tree-Elton-Comb.}]
Let $F_0$ be a countable dense subset of $F$. Consider the set
$\mathcal{A}$ of all chains
$\beta=(s_i)_{i=1}^\infty\in\mathcal{C}(S)$ which satisfy the
following property: for any $f\in F_0$, any $n\in\mathbb{N}$ and any
$I\subseteq\{1,2,\ldots,n\}$ with $f(s_i)>0$, $i\in I$, and
$\sum_{i\in I}f(s_i)>\delta$, there is $g\in F_0$ such that
\begin{enumerate}
\item  $\sum_{i\in I}g^+(s_i)>(1-\frac{\epsilon}{2})\sum_{i\in I}f(s_i)$,
\item  $\sum_{i\in J}|g(s_i)|<\frac{\epsilon}{2}\sum_{i\in I}f(s_i)$,
where $J=\{i\le n \mid i\notin I$ or $g(s_i)<0\}$.
\end{enumerate}

\begin{claim}
The set $\mathcal{A}$ is a Borel subset of $\mathcal{C}(S)$.
\end{claim}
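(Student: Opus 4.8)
The plan is to express $\mathcal{A}$ as a countable combination (intersections and unions) of the clopen sub-basic sets $U(t)$ and $V(t)$, so that Borel-ness follows immediately from the fact that $F_0$, $S$, and $\mathbb{N}$ are all countable. The key observation is that for a fixed chain $\beta=(s_i)_{i=1}^\infty$, the property defining $\mathcal{A}$ quantifies only over $f\in F_0$ (countable), over $n\in\mathbb{N}$ (countable), over finite sets $I\subseteq\{1,\ldots,n\}$ (countably many), and asserts the existence of $g\in F_0$ (countable) satisfying conditions (1) and (2). So if I can show that for fixed $f,g\in F_0$, fixed $n$, and a fixed ``index pattern'' the set of chains $\beta$ realizing the relevant configuration is Borel, then $\mathcal{A}$ is obtained by a countable intersection over $(f,n,I)$ of a countable union over $g$ of such Borel sets, hence Borel.

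The main subtlety is that the conditions refer to the nodes $s_1,\ldots,s_n$, i.e.\ to the first $n$ elements of the chain $\beta$ \emph{in the order induced by the tree}, and a chain does not come with a canonical enumeration of its initial nodes unless we read it off the levels. So first I would make precise the map $\beta\mapsto(s_i(\beta))$: for $\beta\in\mathcal{C}(S)$, list its elements in increasing order with respect to $\leq$, so $s_i(\beta)$ is the $i$-th node of $\beta$. The crucial point is that for each fixed node $t\in S$ and each $i\in\mathbb{N}$, the set $\{\beta\in\mathcal{C}(S)\mid s_i(\beta)=t\}$ is Borel; indeed it equals $U(t)$ intersected with the (Borel) condition ``exactly $i-1$ nodes of $\beta$ lie strictly below $t$'', and the latter is a Boolean combination of the $U(t')$ for the finitely many $t'<t$ together with the $U(t)$ itself — in fact it can be written using only the finitely many predecessors of $t$, so it is clopen. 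Granting this, for fixed $f,g\in F_0$ and fixed $n$, I would decompose according to which nodes $t_1<t_2<\cdots<t_n$ occupy the positions $s_1,\ldots,s_n$: there are only countably many such tuples, the event ``$s_i(\beta)=t_i$ for $i=1,\ldots,n$'' is clopen, and \emph{on that event} conditions (1) and (2) become fixed numerical inequalities among the real numbers $f(t_i),g(t_i)$ (which either hold or fail, contributing the whole event or nothing). Likewise the premise ``$f(s_i)>0$ for $i\in I$ and $\sum_{i\in I}f(s_i)>\delta$'' becomes a fixed numerical condition on that event.

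Putting this together: fix $f\in F_0$, $n\in\mathbb{N}$, $I\subseteq\{1,\ldots,n\}$. The set of $\beta$ for which the premise holds is a countable union over tuples $(t_1<\cdots<t_n)$ with $f(t_i)>0$ $(i\in I)$ and $\sum_{i\in I}f(t_i)>\delta$ of the clopen events $\{s_i(\beta)=t_i,\ i\le n\}$ — hence Borel (in fact open). The set of $\beta$ for which some $g\in F_0$ witnesses the conclusion is a countable union over $g\in F_0$ and over tuples $(t_1<\cdots<t_n)$ satisfying both the premise inequalities for $f$ and the conclusion inequalities (1),(2) for $(f,g)$, of the same clopen events — again Borel. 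Therefore
$$
\mathcal{A}=\bigcap_{f\in F_0}\ \bigcap_{n\in\mathbb{N}}\ \bigcap_{I\subseteq\{1,\ldots,n\}}\Big(\big(\text{premise for }(f,n,I)\big)^{c}\ \cup\ \big(\text{some }g\text{ witnesses conclusion}\big)\Big),
$$
a countable intersection of Borel sets, hence Borel.

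The step I expect to be the main (minor) obstacle is verifying rigorously that $\beta\mapsto s_i(\beta)$ has Borel fibres and, relatedly, that on $\mathcal{C}(S)$ the ``$i$-th node'' is well defined and measurable — this is where one uses that $\mathcal{C}(S)$ consists of \emph{infinite} chains, so every position $i$ is filled, and that ``$t\in\beta$ and exactly $i-1$ predecessors of $t$ lie in $\beta$'' pins down $s_i(\beta)=t$ using only the clopen sets $U(t'),V(t')$ for $t'\le t$. Everything after that is bookkeeping: the premise and the two displayed inequalities are, on each clopen piece, fixed real-number comparisons, so they do not affect Borel-ness, and the quantifier structure is visibly countable. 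Hence $\mathcal{A}$ is Borel, and in particular analytic, so that Stern's theorem (Theorem \ref{th.Stern}) applies to it in the remainder of the proof.
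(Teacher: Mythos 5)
Your proposal is correct and follows essentially the same route as the paper: you write $\mathcal{A}$ as the countable intersection over $(f,n,I)$ of the union of the ``premise fails'' set with the countable union over $g\in F_0$ of the ``premise holds and $g$ witnesses the conclusion'' sets, exactly as the paper does with its sets $E(f,n,I)$ and $D(f,n,I,g)$. The only difference is that you spell out why these basic sets are (cl)open --- via the observation that $\{\beta \mid s_i(\beta)=t\}$ is a Boolean combination of the clopen sets $U(t')$ for $t'\le t$ --- a step the paper dismisses with ``clearly''; this is a welcome elaboration, not a different argument.
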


Indeed, we have
$$\mathcal{A}=\bigcap_{f\in F_0}\bigcap_{n\in\mathbb{N}}
\bigcap_{I\subseteq\{1,\ldots,n\}} \Big[E(f,n,I) \bigcup
\Big(\bigcup_{g\in F_0}D(f,n,I,g)\Big)\Big]$$ where
$$E(f,n,I)=\Big\{\beta=(s_i)\in\mathcal{C}(S) \mid f(s_i)\leq 0 \text{ for
some } i\in I \text{ or } \sum_{i\in I}f(s_i)\le \delta\Big\}$$
\begin{flushright}
$D(f,n,I,g)=\Big\{\beta=(s_i)\in\mathcal{C}(S) \mid f(s_i)>0$ for
all $i\in I, \,\,  \sum_{i\in I}f(s_i)> \delta$ and $g$ satisfies
properties (1) and (2) in the definition of $\mathcal{A}\Big\}$.
\end{flushright}
Clearly, $E(f,n,I)$ and $D(f,n,I,g)$ are open subsets of
$\mathcal{C}(S)$, therefore $\mathcal{A}$ is a Borel set.

Now Stern's theorem implies that there is a subtree $T$ of $S$ such
that either (a)~$\mathcal{C}(T)\subseteq \mathcal{A}$ or
(b)~$\mathcal{C}(T)\cap \mathcal{A}=\emptyset$. However, the case
(b) can be excluded. Indeed, let us assume that $\mathcal{C}(T)\cap
\mathcal{A}=\emptyset$ and let $\beta$ be any chain of $T$. Applying
Theorem \ref{th.Elton.comb.}, we find a subchain $\alpha=(s_i)$ of
$\beta$ such that for any $f\in F$, any $n\in\mathbb{N}$ and any
$I\subseteq\{i\leq n\mid f(s_i)>0\}$ with $\sum_{i\in
I}f(s_i)>\delta$, there is $g\in F$ such that $\sum_{i\in
I}g^+(s_i)>(1-\frac{\epsilon}{2})\sum_{i\in I}f(s_i)$ and
$\sum_{i\in J}|g(s_i)|<\frac{\epsilon}{2}\sum_{i\in I}f(s_i)$, where
$J=\{i\le n \mid i\notin I$ or $g(s_i)<0\}$. Since $F_0$ is dense in
$F$, it follows that we can find $g\in F_0$ satisfying the above
properties. Hence, $\alpha\in\mathcal{C}(T)\cap \mathcal{A}$ and we
have reached a contradiction. Therefore, $\mathcal{C}(T)\subseteq
\mathcal{A}$.

Since $F_0$ is dense in $F$, we can easily verify that the subtree
$T$ satisfies the conclusion of the theorem.
\end{proof}

In a similar method we also prove the next combinatorial theorem for
the dyadic tree.

\begin{theorem}\label{th.Tree-AMT-Comb.}
Suppose that $F$ is a compact subset of $bcn(S)$ which is bounded
with respect to the supremum norm and let $\delta>0$ and
$\epsilon\in(0,1)$ be given. Then there exists a subtree $T$ of $S$
satisfying the following property:

for any chain $\beta=(s_i)$ of $T$, any $f\in F$, any
$n\in\mathbb{N}$ and any $I\subseteq\{1,\ldots,n\}$ with $\min_{i\in
I}f(s_i)>\delta$, there exists $g\in F$ such that
\begin{enumerate}
\renewcommand{\theenumi}{\roman{enumi}}
\item $\min_{i\in I}g(s_i)>(1-\epsilon)\delta$
\item $\sum_{i\notin I,i\le n}|g(s_i)|<\epsilon\delta$.
\end{enumerate}
\end{theorem}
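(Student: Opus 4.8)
The plan is to follow exactly the template established in the proof of Theorem~\ref{th.Tree-Elton-Comb.}, simply replacing the role of Elton's combinatorial principle (Theorem~\ref{th.Elton.comb.}) by the corresponding principle of Argyros--Mercourakis--Tsarpalias (Theorem~\ref{th.AMT.comb.}). First I would fix a countable dense subset $F_0$ of $F$ (which exists since $bcn(S)$ is metrizable and $F$ is compact). Then I would define the set $\mathcal{A}$ of all chains $\beta=(s_i)_{i=1}^\infty\in\mathcal{C}(S)$ with the property that for every $f\in F_0$, every $n\in\mathbb{N}$ and every $I\subseteq\{1,\ldots,n\}$ with $\min_{i\in I}f(s_i)>\delta$, there exists $g\in F_0$ satisfying $\min_{i\in I}g(s_i)>(1-\tfrac{\epsilon}{2})\delta$ and $\sum_{i\notin I,\,i\le n}|g(s_i)|<\tfrac{\epsilon}{2}\delta$. (The point of using $\tfrac{\epsilon}{2}$ rather than $\epsilon$ is to leave room for the density approximation at the end.)

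Next I would verify that $\mathcal{A}$ is Borel, in fact a countable intersection of sets that are finite unions of open sets. Writing
$$\mathcal{A}=\bigcap_{f\in F_0}\bigcap_{n\in\mathbb{N}}\bigcap_{I\subseteq\{1,\ldots,n\}}\Big[E(f,n,I)\cup\Big(\bigcup_{g\in F_0}D(f,n,I,g)\Big)\Big],$$
where $E(f,n,I)=\{\beta=(s_i)\in\mathcal{C}(S)\mid \min_{i\in I}f(s_i)\le\delta\}$ encodes the failure of the hypothesis and $D(f,n,I,g)=\{\beta=(s_i)\in\mathcal{C}(S)\mid \min_{i\in I}f(s_i)>\delta$ and $g$ satisfies the two displayed conditions$\}$ encodes the success of a particular witness $g$. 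Each of these sets depends only on finitely many coordinates $s_1,\ldots,s_n$ of $\beta$, and the defining inequalities are strict or non-strict in a way that makes $E$ and $D$ open in $\mathcal{C}(S)$ (the map $\beta\mapsto s_i$ is continuous on $\mathcal{C}(S)$, and $f,g$ are arbitrary fixed bounded functions). Hence $\mathcal{A}$ is Borel.

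Then I would apply Stern's theorem (Theorem~\ref{th.Stern}) to get a subtree $T$ of $S$ with either $\mathcal{C}(T)\subseteq\mathcal{A}$ or $\mathcal{C}(T)\cap\mathcal{A}=\emptyset$, and rule out the second alternative: given any chain $\beta$ of $T$, apply Theorem~\ref{th.AMT.comb.} with the weakly compact set $F$ (note $F$ is norm-bounded in $bcn(S)\subseteq c_0(\beta)$ after restriction, and compact hence weakly compact) and with $\epsilon/2$ in place of $\epsilon$, to extract a subchain $\alpha=(s_i)$ of $\beta$ along which the conclusion of that theorem holds for all $f\in F$; since $F_0$ is dense in $F$, the witness $g$ can be taken in $F_0$ (using that the conditions are open and that the relevant quantities vary continuously in $g$ in the pointwise topology restricted to the finitely many coordinates involved), so $\alpha\in\mathcal{C}(T)\cap\mathcal{A}$, a contradiction. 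Finally, from $\mathcal{C}(T)\subseteq\mathcal{A}$ and density of $F_0$ in $F$ one upgrades the statement from $F_0$ to all of $F$, absorbing the gap between $\epsilon/2$ and $\epsilon$: given $f\in F$ with $\min_{i\in I}f(s_i)>\delta$, approximate $f$ by some $f_0\in F_0$ close enough on $s_1,\ldots,s_n$ that still $\min_{i\in I}f_0(s_i)>\delta$, obtain a witness $g\in F_0$ for $f_0$ with constant $\epsilon/2$, and check that $g$ works for $f$ with constant $\epsilon$. The main (and really only) obstacle is bookkeeping in this last density/approximation step — making sure the $\epsilon/2$ slack genuinely suffices and that restricting $F$ to a chain $\beta$ lands inside $c_0$ so that Theorem~\ref{th.AMT.comb.} applies; everything else is a routine transcription of the Elton-case argument.
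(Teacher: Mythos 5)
Your proposal is correct and follows essentially the same route as the paper: fix a countable dense $F_0\subseteq F$, define the Borel set $\mathcal{A}$ via the $E(f,n,I)$ and $D(f,n,I,g)$ decomposition, apply Stern's theorem, and rule out the empty-intersection alternative by applying Theorem~\ref{th.AMT.comb.} along an arbitrary chain. The only (harmless) difference is your use of $\epsilon/2$ slack; since the conditions on the witness $g$ in this theorem do not involve $f$ on the right-hand side (unlike the Elton case), the paper gets away with using $\epsilon$ directly, but your more cautious bookkeeping also works.
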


\begin{proof}
Let $F_0$ be a countable dense subset of $F$ and let $\mathcal{A}$
be the set of all chains $\beta=(s_i)\in\mathcal{C}(S)$ which
satisfy the following: for any $f\in F_0$, any $n\in\mathbb{N}$ and
any $I\subseteq\{1,2,\ldots,n\}$ with $\min_{i\in I}f(s_i)>0$ there
is $g\in F_0$ such that $\min_{i\in I}g(s_i)>(1-\epsilon)\delta$ and
$\sum_{i\notin I,i\le n}|g(s_i)|<\epsilon\delta$. Then,
$$\mathcal{A}=\bigcap_{f\in F_0}\bigcap_{n\in\mathbb{N}}
\bigcap_{I\subseteq\{1,\ldots,n\}} \Big[E(f,n,I) \bigcup
\Big(\bigcup_{g\in F_0}D(f,n,I,g)\Big)\Big]$$ where
$$E(f,n,I)=\Big\{\beta=(s_i)\in\mathcal{C}(S) \mid \min_{i\in I}f(s_i)\leq
\delta \Big\}$$
\begin{flushright}
$D(f,n,I,g)=\Big\{\beta=(s_i)\in\mathcal{C}(S) \mid \min_{i\in
I}f(s_i)>\delta$, $\min_{i\in I}g(s_i)>(1-\epsilon)\delta$ and
$\sum_{i\notin I,i\le n}|g(s_i)|<\epsilon\delta$ $\Big\}$.
\end{flushright}
It follows that $\mathcal{A}$ is a Borel subset of $\mathcal{C}(S)$.
Stern's theorem implies that there is a subtree $T$ of $S$ such that
either (a)~$\mathcal{C}(T) \subseteq \mathcal{A}$ or
(b)~$\mathcal{C}(T) \cap \mathcal{A} =\emptyset$. By Theorem
\ref{th.AMT.comb.}, the case (b) is excluded. Therefore,
$\mathcal{C}(T) \subseteq \mathcal{A}$ and the result follows.
\end{proof}

Finally, we expand Theorem \ref{th.Arvan.comb.} to obtain a
combinatorial theorem for trees.

\begin{theorem}\label{th.Tree-Arvan-Comb.}
Assume that $I$ is a set, $n$ a positive integer and for every $i\in
I$, $F_1^i,\ldots,F_n^i$are finite subsets of $S$. For any $i\in I$
we set $F^i=\cup_{k=1}^nF_k^i$ and let $\mathcal{F}=\{F^i\mid i\in
I\}$. We also assume that for any $F$ in the closure of
$\mathcal{F}$ and for any chain $\beta\in\mathcal{C}(S)$, the set
$F\cap\beta$ is finite. Then there exists a subtree $T$ of $S$
satisfying the following property:

for any chain $\beta=(s_i)\in \mathcal{C}(T)$, any
$F^i\in\mathcal{F}$, any $q\in\mathbb{N}$, any
$k\in\{1,2,\ldots,n\}$ and any $A\subseteq
F_k^i\cap\{s_1,\ldots,s_q\}$ there exists $F^{i'}\in\mathcal{F}$
such that:
\begin{enumerate}
\renewcommand{\theenumi}{\roman{enumi}}
\item $F_k^{i'}\cap\{s_1,\ldots,s_q\}=A$
\item $F_{k'}^{i'}\cap\{s_1,\ldots,s_q\}\subseteq A$ for any $k'\neq
k$.
\end{enumerate}
\end{theorem}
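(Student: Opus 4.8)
The plan is to mimic exactly the strategy used for Theorems~\ref{th.Tree-Elton-Comb.} and \ref{th.Tree-AMT-Comb.}: reduce the tree statement to the corresponding sequence statement (Theorem~\ref{th.Arvan.comb.}) via Stern's dichotomy applied to a suitably chosen Borel set of chains. First I would fix a countable dense subset; however, here the index set $I$ need not be countable and there is no ambient metric structure on $\mathcal{F}$, so instead I would pass to a countable subfamily. Concretely, since $S$ is countable, the collection of all pairs $(\{s_1,\ldots,s_q\}, k)$ and all possible values $A\subseteq\{s_1,\ldots,s_q\}$ is countable; and for each such data together with each $F^i$, either there is \emph{some} $F^{i'}$ with the required intersection pattern relative to that specific finite set $\{s_1,\ldots,s_q\}$ or there is not. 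So I would choose a countable subset $I_0\subseteq I$ such that: for every finite $E\subseteq S$, every $k\le n$, every $A\subseteq E$, and every ``type'' $(F_1^i\cap E,\ldots,F_n^i\cap E)$ that is realized by some $i\in I$, it is already realized by some $i\in I_0$, and moreover whenever the ``witness $i'$'' exists in $I$ it already exists in $I_0$. (This is possible because there are only countably many finite subsets $E$ of $S$ and for each the set of realizable types is finite.) Write $\mathcal{F}_0=\{F^i\mid i\in I_0\}$.

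Next I would define $\mathcal{A}\subseteq\mathcal{C}(S)$ to be the set of all chains $\beta=(s_i)_{i=1}^\infty$ such that for every $i\in I_0$, every $q\in\mathbb{N}$, every $k\in\{1,\ldots,n\}$ and every $A\subseteq F_k^i\cap\{s_1,\ldots,s_q\}$ there is $i'\in I_0$ with $F_k^{i'}\cap\{s_1,\ldots,s_q\}=A$ and $F_{k'}^{i'}\cap\{s_1,\ldots,s_q\}\subseteq A$ for all $k'\ne k$. Exactly as in the previous two proofs, I would exhibit $\mathcal{A}$ as a countable intersection of sets of the form $E\cup\bigcup_{i'\in I_0}D$, where the conditions defining $E$ and $D$ involve only finitely many membership relations $t\in\beta$ (namely whether each element of the finite sets $F_k^i$, and of the relevant $F_k^{i'}$, is an initial segment of $\beta$, and the identification of $s_1<\cdots<s_q$). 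Since membership of a fixed node $t$ in a chain is clopen in $\mathcal{C}(S)$, each $E$ and each $D$ is clopen, and hence $\mathcal{A}$ is Borel. Then Stern's theorem (Theorem~\ref{th.Stern}) yields a subtree $T$ with either $\mathcal{C}(T)\subseteq\mathcal{A}$ or $\mathcal{C}(T)\cap\mathcal{A}=\emptyset$.

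To exclude the second alternative I would take any chain $\beta$ of $T$, enumerate it as $\beta=(t_j)_{j=1}^\infty$, and apply the sequence-level principle Theorem~\ref{th.Arvan.comb.} to $N=\{j : t_j\in\beta\}\subseteq\mathbb{N}$ after transporting the sets $F_k^i$ to subsets of $\mathbb{N}$ via the enumeration (discarding the, by hypothesis finitely many, indices not lying on $\beta$). The hypothesis that $F\cap\beta$ is finite for every $F$ in the closure of $\mathcal{F}$ guarantees that the transported family has closure consisting of finite sets, so Theorem~\ref{th.Arvan.comb.} applies and produces a subchain $\alpha$ of $\beta$ with the desired intersection property, where the witness lies in $\mathcal{F}$; by the choice of $I_0$ the witness can be taken in $\mathcal{F}_0$. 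Hence $\alpha\in\mathcal{C}(T)\cap\mathcal{A}$, contradicting case (b). Therefore $\mathcal{C}(T)\subseteq\mathcal{A}$, and since $\mathcal{F}_0\subseteq\mathcal{F}$ the subtree $T$ satisfies the conclusion of the theorem.

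The main obstacle I anticipate is not the Stern-theorem machinery (which is by now routine) but the bookkeeping around the uncountable index set $I$: one must verify carefully that a single countable $I_0$ can be chosen that simultaneously ``realizes all types'' and ``contains all needed witnesses'' over all finite windows $\{s_1,\ldots,s_q\}$, so that both the definition of $\mathcal{A}$ and the reduction to the sequence case go through with $\mathcal{F}_0$ in place of $\mathcal{F}$. A clean way to organize this is: for each finite $E\subseteq S$ the equivalence relation on $I$ given by ``same vector of traces $(F_1^i\cap E,\ldots,F_n^i\cap E)$'' has finitely many classes, so pick one representative per class; then, for each finite $E$, each $k$, each such class, and each $A\subseteq E$, if some $i'\in I$ witnesses the required pattern relative to $E$, throw one such $i'$ into $I_0$. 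The union over all finite $E$ of these finitely-many choices is countable, and this $I_0$ works. Once this is set up, everything else is a verbatim adaptation of the proof of Theorem~\ref{th.Tree-Elton-Comb.}.
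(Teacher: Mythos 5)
Your proposal is correct and follows essentially the same route as the paper: pass to a countable subfamily of $\mathcal{F}$, define the Borel set $\mathcal{A}$ of chains having the witness property for that subfamily, apply Stern's theorem, and rule out the negative alternative by applying the sequence-level principle (Theorem~\ref{th.Arvan.comb.}) along an arbitrary chain of the subtree. The only real difference is in how the countable subfamily is chosen: the paper takes, for each $k$, a countable $I_k$ with $\{F_k^i\}_{i\in I_k}$ dense in $\{F_k^i\}_{i\in I}$ in $\mathcal{P}(S)$ and sets $J=\bigcup_k I_k$, whereas your per-window, per-type selection of representatives together with explicit witnesses controls all $n$ coordinates simultaneously, which is a slightly more careful implementation of the same step (and is what is actually needed to replace the witness supplied by Theorem~\ref{th.Arvan.comb.} in $I$ by one in the countable set).
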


\begin{proof}
The powerset $\mathcal{P}(S)$ endowed with the product topology is a
compact metric space. Therefore, $\mathcal{P}(S)$ is separable. It
follows that for any $k=1,2,\ldots,n$ there is a countable subset
$I_k\subset I$ such that $\{F_k^i\}_{i\in I_k}$ is a dense subset of
$\{F_k^i\}_{i\in I}$. We set $J=\cup_{k=1}^nI_k$. Clearly, $J$ is
countable and for every $k=1,2,\ldots,n$, $\{F_k^i\}_{i\in J}$ is
dense in $\{F_k^i\}_{i\in I}$.

We consider the set $\mathcal{A}$ of all chains
$\beta=(s_i)\in\mathcal{C}(S)$ which satisfy the following property:
for any $i\in J$, any $q\in\mathbb{N}$, any $k\in\{1,2,\ldots,n\}$
and any $A\subseteq F_k^i\cap\{s_1,\ldots,s_q\}$ there is $i'\in J$
such that: $F_k^{i'}\cap\{s_1,\ldots,s_q\}=A$ and
$F_{k'}^{i'}\cap\{s_1,\ldots,s_q\}\subseteq A$ for any $k'\neq k$.
Then we have
$$\mathcal{A}=\bigcap_{i\in J}\bigcap_{q\in\mathbb{N}}
\bigcap_{k\in\{1,\ldots,n\}} \bigcap_{A\subset F_k^i}
\Big[E(i,q,k,A) \bigcup \Big(\bigcup_{i'\in
J}D(i,q,k,A,i')\Big)\Big]$$ where
\begin{flushleft}
$E(i,q,k,A)=\Big\{\beta=(s_i)\in\mathcal{C}(S) \mid A\nsubseteq
F_k^i\cap\{s_1,\ldots,s_q\} \Big\}$
\end{flushleft}
\begin{flushright}
$D(i,q,k,A,i')=\Big\{\beta=(s_i)\in\mathcal{C}(S) \mid A\subseteq
F_k^i\cap\{s_1,\ldots,s_q\}$, $F_k^{i'}\cap\{s_1,\ldots,s_q\}=A$ and
$F_{k'}^{i'}\cap\{s_1,\ldots,s_q\}\subseteq A$ for any $k'\neq k$
$\Big\}$.
\end{flushright}
The sets $E(i,q,k,A)$ and $D(i,q,k,A,i')$ are open, therefore
$\mathcal{A}$ is a Borel subset of $\mathcal{C}(S)$. By Stern's
theorem, there is a subtree $T$ of $S$ such that either
(a)~$\mathcal{C}(T)\subseteq \mathcal{A}$ or
(b)~$\mathcal{C}(T)\cap\mathcal{A}=\emptyset$. By Theorem
\ref{th.Arvan.comb.} we must have (a). Since $\{F_k^i\}_{i\in J}$ is
dense in $\{F_k^i\}_{i\in I}$ for any $k=1,2,\ldots,n$, the tree $T$
satisfies the desired property.
\end{proof}

\section{Tree-families of continuous functions}\label{sec.Continuous
Functions} In this section we consider tree-families $(f_s)_{s\in
S}$ of continuous functions. Then, under some conditions, we show
that there exists a subtree $T$ of $S$ such that for any chain
$\beta\in\mathcal{C}(T)$, the sequence $(f_s)_{s\in\beta}$ is
unconditional. First, we prove the following general result.

\begin{theorem}\label{th.unconditional}
Let $(x_s)_{s\in S}$ be a family in a Banach space $X$ such that
$x_s\neq0$, $s\in S$. Suppose that for any chain $\beta$ of $S$ the
sequence $(x_s)_{s\in\beta}$ contains an unconditional subsequence.
Then there exist a subtree $T$ of $S$ and a constant $C>0$ such that
for any chain $\beta$ of $\mathcal{T}$, the sequence
$(x_s)_{s\in\beta}$ is $C$-unconditional.
\end{theorem}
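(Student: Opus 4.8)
The plan is to apply Stern's theorem (Theorem~\ref{th.Stern}) to the set of chains $\beta$ along which the sequence $(x_s)_{s\in\beta}$ is, say, $2^{n}$-unconditional for a fixed $n$, and then extract a uniform constant by a pigeonhole/diagonalization argument across the values of $n$. First I would set, for each positive integer $n$,
\[
\mathcal{A}_n=\bigl\{\beta=(s_i)_{i=1}^\infty\in\mathcal{C}(S)\ \big|\ (x_{s_i})_{i=1}^\infty \text{ is } n\text{-unconditional}\bigr\}.
\]
I claim each $\mathcal{A}_n$ is Borel. Indeed $n$-unconditionality of a normalized (or merely non-zero) sequence $(y_i)$ is the conjunction over all $k$, all rationals $a_1,\dots,a_k$, and all sign patterns $(\varepsilon_i)\in\{\pm1\}^k$ of the closed condition $\bigl\|\sum_{i\le k}\varepsilon_i a_i y_i\bigr\|\le n\bigl\|\sum_{i\le k}a_i y_i\bigr\|$; since the map $\beta=(s_i)\mapsto (x_{s_i})_{i\le k}$ is continuous on $\mathcal{C}(S)$ (enumerating a chain by levels is continuous in the topology generated by the $U(t),V(t)$), each such condition defines a closed subset of $\mathcal{C}(S)$, and a countable intersection of these is Borel. (If one wants to be careful about the indexing, one works instead with the hypothesis phrased as: every chain has an unconditional subsequence, and defines $\mathcal{A}_n$ as the set of chains having an $n$-unconditional subsequence, which is a countable union over infinite subsets of $\mathbb{N}$ of the previous closed sets, hence still Borel.)

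Next, for each $n$ apply Stern's theorem to $\mathcal{A}_n$ inside the previously obtained subtree. Concretely, build a decreasing sequence of subtrees $S=T_0\supseteq T_1\supseteq T_2\supseteq\cdots$ as follows: given $T_{n-1}$, apply Theorem~\ref{th.Stern} to $\mathcal{A}_n\cap\mathcal{C}(T_{n-1})$ viewed as an analytic (indeed Borel) subset of $\mathcal{C}(T_{n-1})$, obtaining a subtree $T_n\subseteq T_{n-1}$ with either $\mathcal{C}(T_n)\subseteq\mathcal{A}_n$ or $\mathcal{C}(T_n)\cap\mathcal{A}_n=\emptyset$. The key point is that the second alternative is impossible for all $n$ simultaneously: by hypothesis every chain $\beta$ of $T_n$ has an unconditional, hence $C_\beta$-unconditional, subsequence for some finite $C_\beta$, so $\beta$ lies in $\mathcal{A}_m$ (or the subsequence-version thereof) for every integer $m\ge C_\beta$; thus $\mathcal{C}(T_n)\cap\mathcal{A}_m\neq\emptyset$ whenever $m\ge C_\beta$. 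Therefore for each $n$ there is some $m=m(n)\ge n$ with $\mathcal{C}(T_{m})\subseteq\mathcal{A}_{m}$; since we only need one good level, pick the least $n_0$ for which the first alternative occurs and set $T=T_{n_0}$, $C=n_0$. Then every chain of $T$ is $C$-unconditional (or has a $C$-unconditional subsequence — see the obstacle below), and the theorem follows.

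The main obstacle is the gap between ``has an unconditional subsequence'' (the hypothesis) and ``is unconditional'' (the conclusion): Stern's theorem applied to the subsequence-version of $\mathcal{A}_n$ only yields a subtree in which every chain \emph{has} a $C$-unconditional subsequence. To close this gap one uses a standard stabilization: pass once more to a further subtree where additionally, for a fixed $C$, the set of chains that are genuinely $C$-unconditional is dichotomized. If in the resulting subtree no chain is $C$-unconditional, then still every chain has a $C$-unconditional \emph{proper} subchain; one then invokes a tree-pruning argument (or re-applies Stern to the set of chains $\beta$ such that the subchain $(s_i)_{i\ge 2}$ is $C$-unconditional, and iterates, using that a chain all of whose tails past finitely many terms are $C$-unconditional is itself $C'$-unconditional with $C'$ controlled) to promote the subsequence to the whole chain at the cost of enlarging the constant by a bounded factor. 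The bookkeeping here — ensuring a \emph{single} constant $C$ works uniformly over all chains of the final subtree — is the technical heart; everything else is the routine verification that the relevant sets are Borel and that Stern's dichotomy can only fall on the favorable side.
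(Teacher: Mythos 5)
There is a genuine gap at the step where you conclude that the favorable Stern alternative must occur at some finite stage $n_0$. What your argument actually establishes is that for each $n$ and each chain $\beta$ of $T_n$, the unconditional subchain $\alpha\subseteq\beta$ (which is itself an element of $\mathcal{C}(T_n)$, since a subchain of a chain of a subtree is again a chain of that subtree) witnesses $\mathcal{C}(T_n)\cap\mathcal{A}_m\neq\emptyset$ for all $m\ge C_\beta$. But the dichotomy for $\mathcal{A}_m$ was applied inside $T_{m-1}$, a much smaller tree, and $\mathcal{C}(T_{m-1})\subseteq\mathcal{C}(T_n)$; a nonempty intersection with the \emph{larger} set $\mathcal{C}(T_n)$ says nothing about $\mathcal{C}(T_{m-1})\cap\mathcal{A}_m$. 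Since $C_\beta$ depends on $\beta$ and may always exceed the index $n$ at which you are currently working, nothing in your argument prevents the negative alternative $\mathcal{C}(T_n)\cap\mathcal{A}_n=\emptyset$ from occurring for \emph{every} $n$, in which case your $n_0$ does not exist. To rescue the iterative scheme you would need a fusion argument producing a single subtree each of whose chains has, for every $n$, a tail lying in $T_n$; no such construction appears in the proposal, and nested subtrees alone do not admit a common chain (the minimal element of $T_m$ need not be comparable with a node already chosen in $T_n$, so one cannot simply diagonalize).

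The paper avoids the iteration entirely: it applies Stern's theorem once, to the single Borel set $\mathcal{A}=\bigcup_{C\in\mathbb{Q}^+}\mathcal{A}_C$ of chains that are unconditional with \emph{some} constant. The negative alternative is excluded exactly by the observation you only hedge on: the unconditional subchain of a chain of the subtree $S'$ is itself a chain of $S'$, hence lies in $\mathcal{C}(S')\cap\mathcal{A}$. This also dissolves what you call the ``main obstacle'': no tree-pruning or shifted-chain iteration is needed, and in any case your claim that a sequence all of whose tails are $C$-unconditional is itself $C'$-unconditional with controlled $C'$ is unsupported here, since the hypotheses ($x_s\neq 0$ only) provide no basis constant with which to control the initial coordinates. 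The uniform constant is then obtained by the Baire category theorem: each $\mathcal{A}_C$ is closed, $\mathcal{C}(S')=\bigcup_{C\in\mathbb{Q}^+}\mathcal{A}_C$ covers a completely metrizable space, so some $\mathcal{A}_C$ has nonempty interior, and a basic open set inside it yields a subtree $T$ with $\mathcal{C}(T)\subseteq\mathcal{A}_C$. Your proposal contains no Baire-category (or equivalent fusion/compactness) step, and that step is the actual mechanism by which a single constant is extracted; the pigeonhole over $n$ you gesture at does not substitute for it.
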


\begin{proof}
Consider the following subset of $\mathcal{C}(S)$:
$$\mathcal{A}=\{\beta\in\mathcal{C}(S) \mid (x_s)_{s\in\beta} \text{ is unconditional}
\}.$$

\begin{claim} The set $\mathcal{A}$ is a Borel subset of
$\mathcal{C}(S)$.
\end{claim}

Indeed, we observe that

\begin{itemize}
\item[{}] $\beta=(s_i)\in\mathcal{A}  \Leftrightarrow (x_s)_{s\in\beta}$ is
unconditional
\item[{}] $\Leftrightarrow$ there is $C>0$ such
that for any $n\in \mathbb{N}$, any
$(a_1,\ldots,a_n)\in\mathbb{R}^n$ and any
$A\subseteq\{1,\ldots,n\}$, $\left\|\sum_{i\in
A}a_ix_{s_i}\right\|\le C\left\|\sum^n_{i=1}a_ix_{s_i}\right\|$
\item[{}] $\Leftrightarrow$ there is $C\in \mathbb{Q}^+$ such that for any
$n\in\mathbb{N}$, any $q=(q_1,\ldots,q_n)\in\mathbb{Q}^n$ and any
$A\subseteq\{1,\ldots,n\}$, $\left\|\sum_{i\in
A}q_ix_{s_i}\right\|\le C\left\|\sum^n_{i=1}q_ix_{s_i}\right\|.$
\end{itemize}
Therefore
$$ \mathcal{A}=\bigcup_{C\in\mathbb{Q}^+}\bigcap_{n\in\mathbb{N}}\bigcap_{q\in\mathbb{Q}^n}\bigcap_{\mathcal{A}\subseteq\{1,\ldots,n\}}
D(C,n,q,A),$$ where, if $q=(q_1,\ldots,q_n)$, then
$$D(C,n,q,A)=\Big\{\beta=(s_i)\in\mathcal{C}(S)\mid
\Big\|\sum_{i\in A}q_ix_{s_i}\Big\|\le
C\Big\|\sum^n_{i=1}q_ix_{s_i}\Big\|\Big\}.
$$
Clearly, $D(C,n,q,A)$ is an open subset of $\mathcal{C}(S)$,
therefore $\mathcal{A}$ is Borel.

Stern's theorem implies that there exists a subtree $S'$ of $S$ such
that either (a)~$\mathcal{C}(S')\subseteq\mathcal{A}$ or
(b)~$\mathcal{C}(S')\cap\mathcal{A}=\emptyset$. By our hypotheses,
if $\beta$ is any chain of $S'$, then there is a subchain
$\alpha\subset\beta$ such that $(x_s)_{s\in \alpha}$ is an
unconditional sequence. Therefore, $\alpha\in\mathcal{A}$ and the
case (b) is impossible. Hence, we have that
$\mathcal{C}(S')\subseteq\mathcal{A}$, that is for any chain $\beta$
of $S'$ the sequence $(x_s)_{s\in\beta}$ is unconditional.

It remains to prove that we can find a subtree $T$ of $S'$ such that
the sequences $(x_s)_{s\in\beta}$, $\beta\in\mathcal{C}(T)$, share
the same unconditional constant $C$. To avoid introducing additional
notation, we assume that for any chain $\beta$ of the original tree
$S$, $(x_s)_{s\in\beta}$ is unconditional, that is
$\mathcal{C}(S)=\mathcal{A}$. As above, we have:
\begin{align*}
    \mathcal{C}(S)&=\bigcup_{C\in\mathbb{Q}^+}\bigcap_{n\in\mathbb{N}}\bigcap_{q\in\mathbb{Q}^n}\bigcap_{\mathcal{A}\subseteq\{1,\ldots,n\}}
D(C,n,q,A)\\
&=\bigcup_{C\in\mathbb{Q}^+} \mathcal{A}_C
\end{align*}
where $\mathcal{A}_C=\bigcap_{n,q,A}D(C,n,q,A)$ is the set of all
chains $\beta$ such that $(x_s)_{s\in\beta}$ is $C$-unconditional.
It is easy to see that $D(C,n,q,A)$ is also a closed subset of
$\mathcal{C}(S)$. Hence, $\mathcal{A}_C$ is a closed set and
$\mathcal{C}(S)=\bigcup_{C\in\mathbb{Q}^+} \mathcal{A}_C$ is
$F_\sigma$. Therefore, the Baire category theorem implies that there
exists a constant $C$ such that the set $\mathcal{A}_C$ has
non-empty interior. This means that there are finitely many nodes
$s_1<s_2<\ldots<s_m$ such that for any chain $\beta$ beginning with
$s_1,s_2,\ldots,s_m$, the sequence $(x_s)_{s\in\beta}$ is
$C$-unconditional. Let $T$ be the subtree consisting of the node
$s_m$ and all its followers. Clearly, $\mathcal{C}(T)\subseteq
\mathcal{A}_C$.

\end{proof}

Combining Theorem \ref{th.unconditional} with Theorems
\ref{th.Arvan.1} and \ref{th.Arvan.2}, we obtain the following
results.

\begin{theorem}\label{th.unconditional-1}
Let $K$ be a Hausdorff compact space, $X$ a Banach space and
$(f_s)_{s\in S}$, $f_s:K\to X$, a normalized family of continuous
functions. We assume that for any maximal chain
$\beta\in\mathcal{C}(S)$, the sequence $(f_s)_{s\in\beta}$ converges
pointwise to zero and that there exists a positive integer $J_\beta$
such that $card(f_s[K])\le J_\beta$ for any $s\in\beta$. Then there
exist a subtree $T$ of $S$ and a constant $C\ge1$ such that for any
chain $\beta$ of $T$, the sequence $(f_s)_{s\in\beta}$ is
$C$-unconditional.
\end{theorem}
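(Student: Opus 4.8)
The plan is to deduce Theorem~\ref{th.unconditional-1} from Theorem~\ref{th.unconditional} by verifying its hypothesis: namely, that for any chain $\beta\in\mathcal{C}(S)$ the sequence $(f_s)_{s\in\beta}$ contains an unconditional subsequence. Fix a chain $\beta$. By assumption, after extending $\beta$ to a maximal chain if necessary (note that the conditions are inherited by sub-chains, since $card(f_s[K])\le J_\beta$ and pointwise convergence to zero both pass to subsequences), the sequence $(f_s)_{s\in\beta}$ is a normalized sequence of continuous $X$-valued functions on $K$ converging pointwise to zero, with $card(f_s[K])\le J_\beta$ uniformly in $s\in\beta$. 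This is exactly the hypothesis of Theorem~\ref{th.Arvan.1} (with the uniform bound $J=J_\beta$), so $(f_s)_{s\in\beta}$ contains an unconditional subsequence $(f_s)_{s\in\alpha}$, $\alpha\subseteq\beta$. Since $\beta$ was arbitrary, the hypothesis of Theorem~\ref{th.unconditional} is satisfied.

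Applying Theorem~\ref{th.unconditional} to the family $(f_s)_{s\in S}$ (which is normalized, hence $f_s\neq 0$ for all $s$), we obtain a subtree $T$ of $S$ and a constant $C>0$ such that for every chain $\beta\in\mathcal{C}(T)$ the sequence $(f_s)_{s\in\beta}$ is $C$-unconditional. Finally, one may take $C\ge 1$: every unconditional basic sequence has unconditional constant at least $1$, or alternatively one simply replaces $C$ by $\max\{C,1\}$ in the conclusion. This completes the argument.

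I would highlight one point that requires a moment's care. Theorem~\ref{th.unconditional-1} only assumes the pointwise-null and finite-range conditions along \emph{maximal} chains, whereas an arbitrary chain $\beta\in\mathcal{C}(S)$ need not be maximal. The resolution is that any chain of $S$ extends to a maximal chain $\tilde\beta\supseteq\beta$ (choose at each level beyond $\beta$ one of the two successors), and the hypotheses for $\tilde\beta$ restrict to $\beta$: a subsequence of a pointwise-null sequence is pointwise null, and the bound $card(f_s[K])\le J_{\tilde\beta}$ holds in particular for $s\in\beta$. Thus Theorem~\ref{th.Arvan.1} applies to $(f_s)_{s\in\beta}$ with $J=J_{\tilde\beta}$, yielding the required unconditional subsequence. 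Beyond this bookkeeping, the proof is a direct citation of Theorems~\ref{th.unconditional} and~\ref{th.Arvan.1}; there is no genuine obstacle, since all the analytic content has been isolated in those two results and in Stern's theorem.

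\begin{proof}
Let $\beta\in\mathcal{C}(S)$ be an arbitrary chain. Extend $\beta$ to a maximal chain $\tilde\beta$ of $S$. By hypothesis, $(f_s)_{s\in\tilde\beta}$ converges pointwise to zero and there is a positive integer $J_{\tilde\beta}$ with $card(f_s[K])\le J_{\tilde\beta}$ for all $s\in\tilde\beta$; in particular these properties hold for the subsequence $(f_s)_{s\in\beta}$, with the same bound $J_{\tilde\beta}$. Hence Theorem~\ref{th.Arvan.1} applies to $(f_s)_{s\in\beta}$ and produces an unconditional subsequence. Since $\beta$ was arbitrary, the family $(f_s)_{s\in S}$ satisfies the hypothesis of Theorem~\ref{th.unconditional}. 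Applying that theorem, we obtain a subtree $T$ of $S$ and a constant $C>0$ such that for every chain $\beta\in\mathcal{C}(T)$ the sequence $(f_s)_{s\in\beta}$ is $C$-unconditional. Replacing $C$ by $\max\{C,1\}$ if necessary, we may assume $C\ge 1$, which completes the proof.
\end{proof}
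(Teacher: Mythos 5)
Your proof is correct and follows exactly the route the paper intends: the paper derives Theorem~\ref{th.unconditional-1} simply by ``combining'' Theorem~\ref{th.unconditional} with Theorem~\ref{th.Arvan.1}, which is precisely your argument, and your extra care about extending an arbitrary chain to a maximal one is a legitimate (and correctly resolved) bookkeeping point that the paper leaves implicit.
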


\begin{theorem}\label{th.unconditional-2}
Let $K$ be a Hausdorff compact space and let $(f_s)_{s\in S}$,
$f_s:K\to\mathbb{R}^m$ be a family of continuous functions. We
assume that for any maximal chain $\beta$ of $S$, the sequence
$(f_s)_{s\in\beta}$ is uniformly bounded and converges pointwise to
zero. Furthermore, we assume that there are a null sequence
$(\epsilon^\beta_n)_{n\in\mathbb{N}}$ of positive real numbers and a
constant $\mu^\beta>0$ such that for any $s\in\beta$ and any $x\in
K$ either $\|f_s(x)\|\le\epsilon^\beta_{lev(s)}$ or
$\|f_s(x)\|\ge\mu^\beta$. Then there exist a subtree $T$ of $S$ and
a constant $C\ge1$ such that for any chain $\beta$ of $T$,
$(f_s)_{s\in\beta}$ is a $C$-unconditional sequence.
\end{theorem}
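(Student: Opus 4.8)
The plan is to deduce Theorem~\ref{th.unconditional-2} from Theorem~\ref{th.unconditional} by showing that its hypotheses are met, which reduces to a purely sequential statement: every chain $\beta$ of $S$ must contain a subchain $\alpha$ such that $(f_s)_{s\in\alpha}$ is an unconditional sequence. Fix a chain $\beta=(s_i)_{i=1}^\infty\in\mathcal{C}(S)$. The data attached to $\beta$ — the null sequence $(\epsilon^\beta_n)_n$, the constant $\mu^\beta>0$, and the uniform bound — show that the sequence $(f_{s_i})_{i\in\mathbb{N}}$ of continuous functions $K\to\mathbb{R}^m$ satisfies \emph{exactly} the hypotheses of Theorem~\ref{th.Arvan.2}: it is uniformly bounded, converges pointwise to zero along $\beta$, and for each $i$ and each $x\in K$ we have either $\|f_{s_i}(x)\|\le\epsilon^\beta_{\mathrm{lev}(s_i)}=:\tilde\epsilon_i$ or $\|f_{s_i}(x)\|\ge\mu^\beta$, where $(\tilde\epsilon_i)_i$ is a null sequence of positive reals since $\mathrm{lev}(s_i)\to\infty$ along a chain and $(\epsilon^\beta_n)_n\to0$. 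Hence Theorem~\ref{th.Arvan.2} yields a subsequence, i.e.\ a subchain $\alpha\subseteq\beta$, along which $(f_s)_{s\in\alpha}$ is unconditional.

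With that observation in hand, the rest is immediate. Since $f_s\neq0$ for every $s$ (a consequence of the dichotomy $\|f_s(x)\|\le\epsilon^\beta_{\mathrm{lev}(s)}$ or $\ge\mu^\beta$ together with normalization being implicit, or more directly because $f_s$ is one of a normalized family; if not, one passes to the nonzero nodes), the family $(f_s)_{s\in S}$ satisfies the hypothesis of Theorem~\ref{th.unconditional}: for every chain $\beta$ of $S$, the sequence $(f_s)_{s\in\beta}$ contains an unconditional subsequence. Theorem~\ref{th.unconditional} then produces a subtree $T$ of $S$ and a constant $C\ge1$ such that for every chain $\beta\in\mathcal{C}(T)$ the sequence $(f_s)_{s\in\beta}$ is $C$-unconditional, which is precisely the conclusion.

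The only real points requiring care are bookkeeping rather than mathematics. First, one must be sure that the parameters $\epsilon^\beta_n$ and $\mu^\beta$ are allowed to depend on the chain $\beta$ — and they are, since Theorem~\ref{th.unconditional} only asks that \emph{each} chain contain \emph{some} unconditional subsequence, with no uniformity in the data; all the uniformity (the single constant $C$) is manufactured by the Baire-category argument already carried out inside the proof of Theorem~\ref{th.unconditional}. Second, one should check that $\mathrm{lev}(s_i)\to\infty$ as $i\to\infty$ along a chain, so that reindexing the null sequence $(\epsilon^\beta_n)_n$ by levels still gives a null sequence; this is clear because a chain is infinite and linearly ordered, hence meets each level at most once and meets infinitely many levels. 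I do not anticipate a genuine obstacle here: the theorem is essentially a transcription of Theorem~\ref{th.Arvan.2} into the tree setting via the machinery of Theorem~\ref{th.unconditional}, and the proof should be only a few lines.

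\begin{proof}
Let $\beta=(s_i)_{i=1}^\infty$ be any chain of $S$. Since $\beta$ is infinite and linearly ordered, $\mathrm{lev}(s_i)\to\infty$, so $(\epsilon^\beta_{\mathrm{lev}(s_i)})_{i\in\mathbb{N}}$ is a null sequence of positive reals. By hypothesis the sequence $(f_{s_i})_{i\in\mathbb{N}}$ of continuous functions $K\to\mathbb{R}^m$ is uniformly bounded, converges pointwise to zero, and satisfies, for every $i$ and every $x\in K$, either $\|f_{s_i}(x)\|\le\epsilon^\beta_{\mathrm{lev}(s_i)}$ or $\|f_{s_i}(x)\|\ge\mu^\beta$. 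Thus Theorem~\ref{th.Arvan.2} applies and provides a subsequence of $(f_{s_i})_i$, i.e.\ a subchain $\alpha\subseteq\beta$, such that $(f_s)_{s\in\alpha}$ is unconditional. Since this holds for every chain $\beta$ of $S$ and $f_s\neq0$ for all $s\in S$, Theorem~\ref{th.unconditional} yields a subtree $T$ of $S$ and a constant $C\ge1$ such that for every chain $\beta$ of $T$ the sequence $(f_s)_{s\in\beta}$ is $C$-unconditional.
\end{proof}
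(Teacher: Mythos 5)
Your proof is correct and is exactly the argument the paper intends: the paper gives no separate proof of Theorem~\ref{th.unconditional-2}, stating only that it follows by ``combining Theorem~\ref{th.unconditional} with Theorems~\ref{th.Arvan.1} and \ref{th.Arvan.2},'' which is precisely the chain-by-chain application of Theorem~\ref{th.Arvan.2} followed by Theorem~\ref{th.unconditional} that you carry out. Your added bookkeeping (that $\mathrm{lev}(s_i)\to\infty$ along a chain and that the data may depend on $\beta$) is accurate and harmless.
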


\section{The case of nearly unconditionality}\label{sec.Nearly}
In this section we prove the analogous to Elton's theorem for the
case of tree-families. Further, as in Theorem
\ref{th.unconditional}, we obtain a uniformity of the constants on
the chains. More precisely, we have the following.

\begin{theorem}\label{th.nearly-uncon}
Let $(x_s)_{s\in S}$ be a normalized family in a Banach space $X$.
Assume that for every chain $\beta\in\mathcal{C}(S)$, the sequence
$(x_s)_{s\in\beta}$ is weakly null. Then there exists a subtree $T$
of $S$ with the following property: for every $\delta>0$ there
exists $C=C(\delta)>0$ such that for any chain $\beta=(s_i)$ of $T$,
any $n\in\mathbb{N}$, any $a_1,\ldots,a_n\in[-1,1]$ and any
$F\subseteq\{i\leq n \mid |a_i|>\delta\}$,
$$
\Big\|\sum_{i\in F}a_ix_{s_i}\Big\|\leq C(\delta) \Big\|
\sum^n_{i=1}a_ix_{s_i}\Big\|. $$ That is, for any chain
$\beta\in\mathcal{C}(T)$, the sequence $(x_s)_{s\in\beta}$ is nearly
unconditional and the constant $C=C(\delta)$ is independent of the
chain $\beta$.
\end{theorem}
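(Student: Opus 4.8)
The plan is to run the same scheme as in the proofs of the combinatorial principles of Section~\ref{sec.Combinatorial-Trees} and of Theorem~\ref{th.unconditional} — reduce to a dichotomy for a Borel set of chains via Stern's theorem, kill the bad alternative with the sequence result (here Theorem~\ref{th.Elton}), and then squeeze out uniform constants by Baire category — but with two extra ingredients forced by the fact that the near‑unconditionality constant depends on $\delta$. First I would build basicity into the set from the start. Consider $\mathcal{A}=\{\beta\in\mathcal{C}(S):(x_s)_{s\in\beta}\text{ is }2\text{-basic and nearly unconditional}\}$. As in the Claims above this is Borel: being $2$-basic is a countable intersection of the clopen conditions $\|\sum_{i\le m}q_ix_{s_i}\|\le 2\|\sum_{i\le n}q_ix_{s_i}\|$ ($q\in\mathbb{Q}^n$), and being nearly unconditional is $\bigcap_{\delta\in\mathbb{Q}^+}\bigcup_{C\in\mathbb{Q}^+}\bigcap_{n,q,F}D(C,n,q,F)$ with $D(C,n,q,F)=\{\beta=(s_i):\|\sum_{i\in F}q_ix_{s_i}\|\le C\|\sum_{i=1}^nq_ix_{s_i}\|\}$ clopen (the map $\beta\mapsto(s_1,\dots,s_n)$ is locally constant). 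Stern's theorem then gives a subtree $S'$ with $\mathcal{C}(S')\subseteq\mathcal{A}$ or $\mathcal{C}(S')\cap\mathcal{A}=\emptyset$; the latter is impossible, since every chain $\beta$ of $S'$ is a normalized weakly null sequence and so, by the Bessaga--Pe\l czy\'nski selection principle together with Theorem~\ref{th.Elton}, has a subchain $\alpha$ that is $2$-basic and nearly unconditional, whence $\alpha\in\mathcal{C}(S')\cap\mathcal{A}$. So from now on every chain of $S'$ is $2$-basic and nearly unconditional.

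For $k\in\mathbb{N}$ and $C>0$ let $\mathcal{A}_{k,C}$ be the set of chains $\beta=(s_i)$ satisfying $\|\sum_{i\in F}a_ix_{s_i}\|\le C\|\sum_{i=1}^na_ix_{s_i}\|$ whenever $F\subseteq\{i\le n:|a_i|>1/k\}$. Each $\mathcal{A}_{k,C}$ is closed and, crucially, \emph{hereditary}: if $\beta\in\mathcal{A}_{k,C}$ and $\beta'\subseteq\beta$ is an infinite subchain, then $\beta'\in\mathcal{A}_{k,C}$ (put the deleted coefficients equal to $0$). I would record the consequence that if a closed hereditary $\mathcal{F}\subseteq\mathcal{C}(Y)$ ($Y$ a subtree) has nonempty interior, then $\mathcal{C}(T)\subseteq\mathcal{F}$ for some subtree $T$ of $Y$: a basic neighbourhood $\{\beta\supseteq A_1,\ \beta\cap A_2=\emptyset\}$ contained in $\mathcal{F}$ is realized by taking $T\subseteq Y$ rooted at $\max A_1$ with all further nodes below the (finite) levels of $A_2$, because then $A_1\cup\gamma\in\mathcal{F}$ for every $\gamma\in\mathcal{C}(T)$, and heredity gives $\gamma\in\mathcal{F}$. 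Since $\mathcal{C}(Y)$ is completely metrizable and equals $\bigcup_C\mathcal{A}_{k,C}$ for any subtree $Y\subseteq S'$, Baire's theorem plus this lemma produce, for any prescribed $k$ and below any prescribed node of $Y$, a subtree whose chains all lie in $\mathcal{A}_{k,C_k}$ for a suitable $C_k$ — the tree analogue of the single Baire step used in Theorem~\ref{th.unconditional}.

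It remains to make this work for \emph{all} $k$ on one subtree, and this is the real point: one cannot merely intersect a decreasing sequence of subtrees (the intersection need not be a subtree), so I would fuse. Build $T=\{t_u:u\in S\}$ together with constants $C_1,C_2,\dots$ by recursion on levels: having $t_u$ for $u\in\{0,1\}^{\le k}$ and, for each leaf $v\in\{0,1\}^k$, a subtree $R_v\subseteq S'$ rooted at $t_v$ all of whose chains lie in $\mathcal{A}_{j,C_j}$ for $j\le k$, apply the lemma inside each $R_v$ with $1/(k+1)$ to get $R_v^{+}\subseteq R_v$ with all chains in $\mathcal{A}_{k+1,C^{v}}$, let $t_{v0},t_{v1}$ be the two successors of the root of $R_v^{+}$ in $R_v^{+}$ and $R_{v0},R_{v1}$ the subtrees below them, and set $C_{k+1}=\max_v C^{v}$. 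Then $T$ is a subtree of $S'$, and the desired $C(\delta)$ exists: given $\delta>0$ put $k=\lceil1/\delta\rceil$; for $\beta=(s_i)\in\mathcal{C}(T)$, the portion of $\beta$ from its first node of $T$-level $\ge k$ onward is a chain of some $R_v$ with $|v|\ge k$, hence satisfies the $\delta$-inequality with constant $C_k$, while it is preceded by at most $k$ nodes of $\beta$. Since $(x_s)_{s\in\beta}$ is $2$-basic, the elementary fact that prepending one term to a $K$-basic sequence replaces a near-unconditionality constant $\psi$ by $K+(1+K)\psi$, uniformly in the prepended term, then gives $C(\delta)=3^{k}(C_k+1)$. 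The only genuine difficulties here are the passage to all $\delta$ simultaneously (handled by the fusion) and the resulting sensitivity of the root region of $T$, where only finitely many bounds were secured (handled by having arranged all chains to be uniformly basic, so that a head of bounded length can be reattached at controlled cost); Borel-ness, the heredity lemma, and excluding the empty alternative are routine.
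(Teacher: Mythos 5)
Your proposal is correct and follows essentially the same route as the paper's proof: a Borel set of nearly unconditional chains dispatched by Stern's theorem together with Elton's Theorem~\ref{th.Elton}, a Baire-category step producing a uniform constant for each fixed $\delta=1/k$, a level-by-level fusion to secure all $k$ on a single subtree, and a final reattachment of the finitely many low-level nodes of a chain using the uniform basis constant. The only (harmless) deviations are that you fold $2$-basicity into the Stern application instead of extracting it first via the paper's preliminary lemma, you make explicit the heredity argument implicit in the paper's Baire step, and you patch the root by iterating a prepend-one-term estimate where the paper uses the maximum $B(k-1)$ of the unconditional constants of the finitely many initial segments.
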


It is well-known that any normalized weakly null sequence in a
Banach space contains a Schauder basic subsequence. The proof of
this result can be easily transferred to tree-families. Thus we
obtain the next lemma whose proof is omitted.

\begin{lemma}
Suppose that $(x_s)_{s\in S}$ is a normalized family in a Banach
space, such that for any chain $\beta$ of $S$ the sequence
$(x_s)_{s\in\beta}$ is weakly null. Then, for every $\epsilon>0$
there exists a subtree $T$ of $S$ such that for any chain $\beta$ of
$T$, $(x_s)_{s\in\beta}$ is $(1+\epsilon)$-basic.
\end{lemma}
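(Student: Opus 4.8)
The plan is to combine the classical one–dimensional fact (every normalized weakly null sequence has a basic subsequence, with constant as close to $1$ as we like) with Stern's dichotomy, exactly along the lines of the proof of Theorem \ref{th.unconditional}. Fix $\epsilon>0$ and consider the set
$$
\mathcal{A}_{1+\epsilon}=\big\{\beta\in\mathcal{C}(S)\mid (x_s)_{s\in\beta}\text{ is }(1+\epsilon)\text{-basic}\big\}.
$$
First I would check that $\mathcal{A}_{1+\epsilon}$ is Borel (in fact closed). This is the same computation as in Theorem \ref{th.unconditional}: if $\beta=(s_i)$ then $(x_s)_{s\in\beta}$ is $(1+\epsilon)$-basic if and only if for all $m\le n$ and all rational scalars $q_1,\dots,q_n$ one has $\big\|\sum_{i\le m}q_ix_{s_i}\big\|\le(1+\epsilon)\big\|\sum_{i\le n}q_ix_{s_i}\big\|$, so
$$
\mathcal{A}_{1+\epsilon}=\bigcap_{m\le n}\ \bigcap_{q\in\mathbb{Q}^n} D(m,n,q),
$$
where each $D(m,n,q)=\{\beta=(s_i)\mid \|\sum_{i\le m}q_ix_{s_i}\|\le(1+\epsilon)\|\sum_{i\le n}q_ix_{s_i}\|\}$ depends only on the first $n$ coordinates of $\beta$ and hence is clopen in $\mathcal{C}(S)$. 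Thus $\mathcal{A}_{1+\epsilon}$ is closed, in particular analytic.

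Next I would apply Stern's theorem (Theorem \ref{th.Stern}) to $\mathcal{A}_{1+\epsilon}$: there is a subtree $S'$ of $S$ with either $\mathcal{C}(S')\subseteq\mathcal{A}_{1+\epsilon}$ or $\mathcal{C}(S')\cap\mathcal{A}_{1+\epsilon}=\emptyset$. The second alternative is excluded: given any chain $\beta$ of $S'$, the sequence $(x_s)_{s\in\beta}$ is normalized and weakly null (by hypothesis on the original family), so by the classical basis construction it has a subsequence, indexed by a subchain $\alpha\subseteq\beta$, which is $(1+\epsilon)$-basic; then $\alpha\in\mathcal{C}(S')\cap\mathcal{A}_{1+\epsilon}$, a contradiction. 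Hence $\mathcal{C}(S')\subseteq\mathcal{A}_{1+\epsilon}$, i.e.\ every chain of $T:=S'$ gives a $(1+\epsilon)$-basic sequence, which is exactly the conclusion.

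The only genuine point to get right is the verification that the "every normalized weakly null sequence has a nearly–isometric basic subsequence'' proof really does transfer to a single chain $\beta$ of $S'$; but a chain is just an ordinary sequence $(s_i)_{i\ge 1}$ with $(x_{s_i})$ normalized and weakly null, so the standard Mazur-type argument (choosing indices along a finite $\epsilon$-net of the unit sphere of the span of earlier vectors and using weak nullity to make the new vector almost norming-functional–annihilated) applies verbatim and produces an increasing subsequence of indices, i.e.\ a subchain. No new idea is needed here beyond what is already asserted in the sentence preceding the lemma; I would simply invoke that assertion. The Borel/closedness bookkeeping is routine and identical in spirit to the corresponding claim in the proof of Theorem \ref{th.unconditional}, so I expect no obstacle there either. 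Overall this lemma is a direct template instance of the Stern-dichotomy technique already displayed in the paper.
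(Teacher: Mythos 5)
Your proof is correct. The paper itself omits the proof of this lemma, indicating only that ``the proof of [the classical subsequence result] can be easily transferred to tree-families'' --- i.e.\ the intended argument is a direct tree version of the Mazur/Bessaga--Pe\l czy\'nski construction, in which one builds the subtree node by node, at each step using weak nullity along the chains through the current node to choose two followers that are almost annihilated by a finite norming set for the span of the vectors already selected. You instead run the Stern-dichotomy template: the set $\mathcal{A}_{1+\epsilon}$ of chains giving $(1+\epsilon)$-basic sequences is a countable intersection of sets determined by finitely many coordinates (hence clopen, since these sets are unions of pieces of a countable clopen partition of $\mathcal{C}(S)$), so it is closed and Stern's theorem applies; the ``disjoint'' alternative is killed because every chain of the homogeneous subtree contains a subchain lying in $\mathcal{A}_{1+\epsilon}$ by the classical sequential result, and a subchain of a chain of $S'$ is again a chain of $S'$. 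All of these steps check out, including the reduction to rational scalars (both sides of the defining inequality are continuous and the inequality is non-strict). Your route is actually more consistent with the paper's stated methodology for its main theorems (reduce to the sequential result and invoke Stern), and it is shorter given that Stern's theorem is already available; the direct construction hinted at by the authors has the mild advantage of not invoking Stern's theorem for this auxiliary lemma and of making the choice of the subtree explicit.
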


\begin{proof}[Proof of Theorem \ref{th.nearly-uncon}]
We may assume, by passing to a subtree if necessary, that for any
chain $\beta$ of $S$, $(x_s)_{s\in\beta}$ is a basic sequence with
basis constant $D\geq 1$, where $D$ is an absolute constant.

We consider the following subset of $\mathcal{C}(S)$:
$$\mathcal{A}=\{\beta\in\mathcal{C}(S) \mid (x_s)_{s\in\beta} \text{ is nearly unconditional}\}.
$$
Now we observe that:
\begin{itemize}
\item[{}] $\beta=(s_i)\in\mathcal{A}  \Leftrightarrow (x_s)_{s\in\beta}$
is nearly unconditional
\item[{}] $\Leftrightarrow$ for every $\delta>0$ there exists $C=C(\delta,\beta)>0$ such
that for any $n\in \mathbb{N}$, any $(a_1,\ldots,a_n)\in [-1,1]^n$
and any $F\subseteq\{i\le n \mid |a_i|>\delta\}$, $\left\|\sum_{i\in
F}a_ix_{s_i}\right\|\le C\left\|\sum^n_{i=1}a_ix_{s_i}\right\|$
\item[{}] $\Leftrightarrow$ for every $\delta\in\mathbb{Q}^+$ there exists $C=C(\delta,\beta)\in \mathbb{Q}^+$ such that for any
$n\in\mathbb{N}$, any
$q=(q_1,\ldots,q_n)\in(\mathbb{Q}\cap[-1,1])^n$ and any
$F\subseteq\{i\le n \mid |q_i|>\delta\}$, $\left\|\sum_{i\in
F}q_ix_{s_i}\right\|\le C\left\|\sum^n_{i=1}q_ix_{s_i}\right\|.$
\end{itemize}
Therefore
$$ \mathcal{A}=\bigcap_{\delta\in\mathbb{Q}^+}\bigcup_{C\in\mathbb{Q}^+}\bigcap_{n\in\mathbb{N}}
\bigcap_{q\in(\mathbb{Q}\cap[-1,1])^n}\bigcap_{F\subseteq\{i\le n
\mid |q_i|>\delta\}} D(C,n,q,F),$$ where, if $q=(q_1,\ldots,q_n)$,
then
$$D(C,n,q,F)=\Big\{\beta=(s_i)\in\mathcal{C}(S)\mid
\Big\|\sum_{i\in F}q_ix_{s_i}\Big\|\le
C\Big\|\sum^n_{i=1}q_ix_{s_i}\Big\|\Big\}.
$$
Clearly, $D(C,n,q,F)$ is an open subset of $\mathcal{C}(S)$ and
hence $\mathcal{A}$ is a Borel set. Stern's theorem implies that
there is a subtree $S'$ of $S$ such that either
(a)~$\mathcal{C}(S')\subseteq\mathcal{A}$ or
(b)~$\mathcal{C}(S')\cap\mathcal{A}=\emptyset$. By Theorem
\ref{th.Elton} every normalized, weakly null sequence contains a
nearly unconditional subsequence, therefore the case (b) is
impossible. Thus $\mathcal{C}(S')\subseteq\mathcal{A}$, that is for
any chain $\beta$ of $S'$ the sequence $(x_s)_{s\in\beta}$ is nearly
unconditional.

Now assume that for the original tree $S$ we have
$\mathcal{C}(S)\subseteq\mathcal{A}$. It remains to show that there
is a subtree $T$ of $S$ such that for every $\delta>0$ there is
$C=C(\delta)>0$ such that for any chain $\beta=(s_i)$ of $T$, any
$n\in\mathbb{N}$, any $a_1,\ldots,a_n\in[-1,1]$ and any
$F\subseteq\{i\le n \mid |a_i|> \delta\}$, $\|\sum_{i\in
F}a_ix_{s_i}\| \le C(\delta)  \|\sum^n_{i=1}a_ix_{s_i} \|$. That is,
the constant $C$ is independent of $\beta$.

We start with the following observation. Fix some positive number
$\delta$. Since for any $\beta\in\mathcal{C}(S)$,
$(x_s)_{s\in\beta}$ is nearly unconditional, it follows that
$$\mathcal{C}(S)=\bigcup_{C\in\mathbb{Q}^+}\bigcap_{n\in\mathbb{N}}
\bigcap_{q\in(\mathbb{Q}\cap[-1,1])^n}\bigcap_{F\subseteq\{i\le n
\mid |q_i|>\delta\}} D(C,n,q,F).$$ As in the proof of Theorem
\ref{th.unconditional}, the Baire category theorem implies that
there exist a positive constant $C(\delta)$ and a subtree $T$ of $S$
such that: for any $\beta=(s_i)\in\mathcal{C}(T)$, any
$n\in\mathbb{N}$, any $a_1,\ldots,a_n\in[-1,1]$ and any
$F\subseteq\{i\le n \mid |a_i|>\delta\}$, $\|\sum_{i\in
F}a_ix_{s_i}\|\le C(\delta) \|\sum^n_{i=1}a_ix_{s_i} \|$. Therefore
the subtree $T$ satisfies the desired property, however for the
specific number $\delta$.

In order to obtain the general result for arbitrary $\delta>0$, we
consider a null sequence $(\delta_n)$ and we apply a diagonal-type
argument for the dyadic tree. The desired subtree $T$ is constructed
inductively. We quote the first steps.

Let $\delta$ be equal to 1. By the previous observation there are a
subtree $T_\emptyset$ of $S$ and a positive constant $R(1)$ such
that: for any chain $\beta=(s_i)\in\mathcal{C}(T_\emptyset)$, any
$n\in\mathbb{N}$, any $a_1,\ldots,a_n\in[-1,1]$ and any
$F\subseteq\{i\le n \mid |a_i|>1\}$, $\|\sum_{i\in F}a_ix_{s_i}\|
\le R(1) \|\sum^n_{i=1}a_ix_{s_i} \|$. Let $t_\emptyset$ be the
minimum element of $T_\emptyset$ and $t_0,t_1$ the nodes placed on
the first level of $T_\emptyset$. Then $t_\emptyset$ is the minimum
node of $T$ and $t_0,t_1$ complete the first level of $T$.

Let $\delta$ be equal to $1/2$ and let $\widetilde{T_0}$ be the
subtree of $T_\emptyset$ which contains the node $t_0$ and all its
followers in $T_\emptyset$. By the previous observation, we find a
subtree $T_0\subseteq\widetilde{T_0}\subseteq T_\emptyset$ and a
constant $R_1(1/2)$ such that for any chain $\beta=(s_i)$ of $T_0$,
any $n\in\mathbb{N}$, any $a_1,\ldots,a_n\in[-1,1]$ and any
$F\subseteq\{i\le n \mid |a_i|> 1/2\}$, $\|\sum_{i\in F}a_ix_{s_i}\|
\le R_1(1/2) \|\sum^n_{i=1}a_ix_{s_i} \|$. Let $t_{(0,0)},t_{(0,1)}$
be the nodes placed on the first level of $T_0$. Then
$t_{(0,0)},t_{(0,1)}$ are the successors of $t_0$ in $T$.

The subtree $T_1\subseteq T_\emptyset$, the constant $R_2(1/2)$ and
the nodes $t_{(1,0)},t_{(1,1)}$ are defined in a similar way. We
also set $R(1/2)=\max\{R_1(1/2),R_2(1/2)\}$ and the second level of
$T$ has been completed.

We inductively construct a subtree $T$ of $S$ and positive constants
$R(1/k)$, $k\in\mathbb{N}$, satisfying the following property: for
any chain $\beta= (s_i )^\infty_{i=1}$ of $T$ with $lev_T(s_1)\ge
k$, any $n\in\mathbb{N}$, any $a_1,\ldots,a_n\in[-1,1]$ and any
$F\subseteq\{i\le n \mid |a_i|> 1/k\}$
$$
\Big\|\sum_{i\in F}a_ix_{s_i}\Big\|\le R(1/k)
\Big\|\sum^n_{i=1}a_ix_{s_i}\Big\|.
$$

\begin{claim} The subtree $T$ satisfies the conclusion
of the theorem.
\end{claim}

For any level $r=1,2,\ldots$, let $A_1,A_2,\ldots,A_{2^r}$ be an
enumeration of the maximal linearly ordered subsets of $T$ which
contain nodes of level less or equal to $r$. We set $B(r)=\max\{c_i
\mid 1\le i\le 2^r\}$, where $c_i$ is the unconditional constant of
the finite sequence $\{x_s \mid s\in A_i\}$. Therefore, for any
maximal chain $\beta=(s_i)_{i=1}^\infty$ of $T$, any
$I\subseteq\{1,2,\ldots,r+1\}$ and any scalars $a_1,\ldots,a_{r+1}$
we have
$$
\Big\|\sum_{i\in I}a_ix_{s_i}\Big\|\le B(r)
\Big\|\sum^{r+1}_{i=1}a_ix_{s_i}\Big\|.$$

Suppose now that $k$ is a positive integer. We show that there is a
constant $C(1/k)>0$ depending only on $k$ such that for any chain
$\beta= (s_i )\in\mathcal{C}(T)$, any $n\in\mathbb{N}$, any
$a_1,\ldots,a_n\in[-1,1]$ and any $F\subseteq\{i\le n \mid |a_i|>
1/k\}$, $ \|\sum_{i\in F}a_ix_{s_i}\| \le R(1/k)
 \|\sum^n_{i=1}a_ix_{s_i} \|$. It suffices to consider only the
maximal chains of $T$. So, if $\beta=(s_i)^\infty_{i=1}$ is maximal
then

\begin{align*}
\Big\|\sum_{i\in F}a_ix_{s_i}\Big\| & \le \Big\|\sum_{i\in F,i\le
k}a_ix_{s_i}\Big\|+ \Big\|\sum_{i\in F,i>
k}a_ix_{s_i}\Big\| \\
&\le
B(k-1)\Big\|\sum^{k}_{i=1}a_ix_{s_i}\Big\|+R(1/k)\Big\|\sum^n_{i=k}a_ix_{s_i}\Big\|
\\
&\le B(k-1)D \Big\|\sum^n_{i=1}a_ix_{s_i}\Big\| +R(1/k)2D
\Big\|\sum^n_{i=1}a_ix_{s_i}\Big\|
\\
&=(B(k-1) D+R(1/k) 2D) \Big\|\sum^n_{i=1}a_ix_{s_i}\Big\|.
\end{align*}
The choice $C(1/k)=B(k-1)  D+R(1/k)  2D$ completes the proof.
\end{proof}

The next corollary is a consequence of Theorem
\ref{th.nearly-uncon}. We refer to \cite{Odell} for the proof of the
analogous result concerning sequences indexed by natural numbers.

\begin{corollary}
Let $(x_s)_{s\in S}$ be a normalized family in a Banach space $X$,
such that for every chain $\beta\in\mathcal{C}(S)$ the sequence
$(x_s)_{s\in\beta}$ is weakly null. We further assume that for no
chain $\beta\in\mathcal{C}(S)$, the sequence $(x_s)_{s\in\beta}$ is
equivalent to the unit vector basis of $c_0$. Then there exists a
subtree $T$ of $S$ such that for any chain
$\beta=(s_i)\in\mathcal{C}(T)$, $(x_s)_{s\in\beta}$ is a
semi-boundedly complete basic sequence, that is whenever
$\sup_n\|\sum_{i=1}^n\lambda_ix_{s_i}\|<+\infty$ then
$\lim_{n\to\infty}\lambda_n=0$.

\end{corollary}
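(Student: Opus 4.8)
The plan is to combine Theorem~\ref{th.nearly-uncon} with the classical sequential dichotomy: a normalized, weakly null, nearly unconditional basic sequence is either equivalent to the unit vector basis of $c_0$ or is semi-boundedly complete (this is the sequential statement for which \cite{Odell} is cited). First I would apply Theorem~\ref{th.nearly-uncon} to obtain a subtree $T_0$ of $S$ such that for every chain $\beta\in\mathcal{C}(T_0)$ the sequence $(x_s)_{s\in\beta}$ is nearly unconditional, with the near-unconditionality constant $C(\delta)$ independent of $\beta$. Passing to a further subtree if necessary (via the Lemma preceding Theorem~\ref{th.nearly-uncon}), I may also assume each $(x_s)_{s\in\beta}$ is Schauder basic with a common basis constant. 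By hypothesis no $(x_s)_{s\in\beta}$ is equivalent to the unit vector basis of $c_0$, so the sequential dichotomy forces every $(x_s)_{s\in\beta}$, $\beta\in\mathcal{C}(T_0)$, to be semi-boundedly complete; this already gives the conclusion with $T = T_0$.

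The only subtlety is making sure the passage through the sequential dichotomy is legitimate: a priori the dichotomy might be applied to a subsequence of $(x_s)_{s\in\beta}$ rather than the whole chain. To handle this cleanly I would instead argue directly via Stern's theorem, mirroring the structure of the proof of Theorem~\ref{th.nearly-uncon}. Consider the set
$$\mathcal{A}=\{\beta=(s_i)\in\mathcal{C}(S)\mid (x_s)_{s\in\beta}\text{ is semi-boundedly complete}\}.$$
Semi-bounded completeness can be written as: for every $\varepsilon\in\mathbb{Q}^+$ there is $N\in\mathbb{N}$ and $M\in\mathbb{N}$ such that for all $n\ge N$ and all rational scalars $(\lambda_i)_{i\le n}$ with $\sup_{m\le n}\|\sum_{i\le m}\lambda_ix_{s_i}\|\le 1$ one has $|\lambda_n|<\varepsilon$ — more carefully, the standard reformulation is that $(x_s)_{s\in\beta}$ fails to be semi-boundedly complete iff there is a normalized block sequence spanning an isomorphic copy of $c_0$; I would use whichever Borel description is most convenient, quantifying over rationals and using that the norms $\|\sum_{i\in F}q_ix_{s_i}\|$ depend continuously (indeed, only on finitely many coordinates of the chain) on $\beta\in\mathcal{C}(S)$, exactly as in the earlier proofs. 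This makes $\mathcal{A}$ a Borel subset of $\mathcal{C}(S)$.

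Stern's theorem then yields a subtree $T$ with either $\mathcal{C}(T)\subseteq\mathcal{A}$ or $\mathcal{C}(T)\cap\mathcal{A}=\emptyset$. To exclude the second alternative, fix any chain $\beta\in\mathcal{C}(T)$ and apply Theorem~\ref{th.Elton} to extract a nearly unconditional subsequence $(x_s)_{s\in\alpha}$, $\alpha\subseteq\beta$; since no chain — hence no subchain — is equivalent to the $c_0$ basis, the sequential dichotomy forces $(x_s)_{s\in\alpha}$ to be semi-boundedly complete, so $\alpha\in\mathcal{C}(T)\cap\mathcal{A}$, a contradiction. Therefore $\mathcal{C}(T)\subseteq\mathcal{A}$, which is the assertion. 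The main obstacle I anticipate is purely bookkeeping: writing down a genuinely Borel (in fact, as in all the earlier arguments, $F_\sigma$ or finer) formula for ``semi-boundedly complete'' that interacts correctly with the product topology on $\mathcal{C}(S)$; once that formula is in hand the argument is a direct transcription of the pattern already used repeatedly in Sections~\ref{sec.Continuous Functions} and~\ref{sec.Nearly}.
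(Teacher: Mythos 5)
Your first paragraph is already a complete proof and is exactly the route the paper intends: the paper offers no separate argument for this corollary, saying only that it is a consequence of Theorem \ref{th.nearly-uncon} together with the sequential argument from \cite{Odell} (which the paper in fact writes out inside the proof of Theorem \ref{th.dichotomy-c0}: a normalized, nearly unconditional basic sequence that fails to be semi-boundedly complete admits a subsequence equivalent to the unit vector basis of $c_0$). The ``subtlety'' that prompts your detour is harmless, for the reason you yourself state in passing: the sequential dichotomy only yields a \emph{subsequence} equivalent to the $c_0$ basis, but that subsequence is indexed by a subchain $\alpha\subseteq\beta$, and $\alpha$ is itself a chain of $S$, so the hypothesis rules it out directly. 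Hence $T=T_0$ works and no further application of Stern's theorem is needed. The alternative Stern-based route is where you run into trouble: your proposed finitary reformulation of semi-bounded completeness is not equivalent to the definition. For the unit vector basis of $\ell_2$ (which is even boundedly complete), the finite sequence $\lambda_1=\dots=\lambda_{n-1}=0$, $\lambda_n=1$ has all partial sums of norm at most $1$ and $\lvert\lambda_n\rvert=1$ for every $n$, so your condition fails while semi-bounded completeness holds; the obstruction is that finite witnesses with bad coordinates escaping to infinity cannot in general be glued into one infinite sequence with non-vanishing coefficients. Indeed, the paper itself (Theorem \ref{th.general}) only establishes that the set of semi-boundedly complete chains is co-analytic, and explicitly flags that as the one place where the full analytic strength of Stern's theorem is invoked. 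Your alternative could be repaired --- co-analytic sets are also covered by Theorem \ref{th.Stern}, and your exclusion of the bad alternative via Theorem \ref{th.Elton} is fine --- but the claim of a Borel description should be dropped. Since your primary argument stands on its own, this does not affect the correctness of the proposal as a whole.
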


\section{The case of convex unconditionality}\label{sec.Convex}
We now use the techniques of the previous sections in the case of
convex unconditionality. As a result, we prove the analogous to
Theorem \ref{th.AMT} for tree-families.

\begin{theorem}\label{th.convex}
Let $(x_s)_{s\in S}$ be a normalized tree-family in a Banach space
$X$. Assume that for each chain $\beta\in\mathcal{C}(S)$, the
sequence $(x_s)_{s\in\beta}$ is weakly null. Then, there exists a
subtree $T$ of $S$ with the following property: for every $\delta>0$
there exists a constant $C=C(\delta)>0$ such that for any chain
$\beta=(s_i)$ of $T$, any absolutely convex combination
$x=\sum^\infty_{n=1}a_nx_{s_n}$ with $\|x\|\ge\delta$ and any
sequence $(\varepsilon_n)_{n\in\mathbb{N}}$ of signs,
$$
\Big\|\sum^\infty_{n=1}\varepsilon_na_nx_{s_n}\Big\|\ge C(\delta).
$$
That is, for any chain $\beta\in\mathcal{C}(T)$, $(x_s)_{s\in\beta}$
is a convexly unconditional sequence and the constant $C=C(\delta)$
depends only on $\delta$.
\end{theorem}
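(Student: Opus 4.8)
The plan is to mimic exactly the structure of the proof of Theorem~\ref{th.nearly-uncon}, replacing ``nearly unconditional'' by ``convexly unconditional'' throughout, and using Theorem~\ref{th.AMT} in place of Theorem~\ref{th.Elton}. First I would reduce, by passing to a subtree via the Lemma, to the case where for every chain $\beta$ the sequence $(x_s)_{s\in\beta}$ is basic with a common basis constant $D\ge 1$; this is needed so that finite truncations are controlled uniformly. Then I would consider the set
$$\mathcal{A}=\{\beta\in\mathcal{C}(S)\mid (x_s)_{s\in\beta}\text{ is convexly unconditional}\}$$
and show it is Borel. The key point is to rewrite the defining condition using only rational data: $\beta=(s_i)\in\mathcal{A}$ iff for every $\delta\in\mathbb{Q}^+$ there is $C\in\mathbb{Q}^+$ such that for every $n$, every finite sequence $q=(q_1,\dots,q_n)$ of nonnegative rationals with $\sum q_i\le 1$ and every choice $\varepsilon\in\{-1,1\}^n$, one has $\|\sum q_i x_{s_i}\|\ge\delta \Rightarrow \|\sum\varepsilon_i q_i x_{s_i}\|\ge C$. (Here one uses that an absolutely convex combination may be taken with finitely many nonzero terms up to an arbitrarily small error, and density of the rationals, together with basicity to handle tails.) This exhibits $\mathcal{A}$ as a countable intersection over $\delta$ of countable unions over $C$ of countable intersections of the sets $D(C,n,q,\varepsilon)=\{\beta\mid \|\sum q_i x_{s_i}\|<\delta\text{ or }\|\sum\varepsilon_i q_i x_{s_i}\|\ge C\}$, each of which is open in $\mathcal{C}(S)$; hence $\mathcal{A}$ is Borel. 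Stern's theorem then yields a subtree $S'$ with $\mathcal{C}(S')\subseteq\mathcal{A}$ or $\mathcal{C}(S')\cap\mathcal{A}=\emptyset$, and the second alternative is impossible because, by Theorem~\ref{th.AMT}, every chain of $S'$ (being normalized weakly null) has a convexly unconditional subsequence, which lies in $\mathcal{A}$.

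Having arranged $\mathcal{C}(S)=\mathcal{A}$, the remaining task is the uniformity of $C=C(\delta)$ over chains. Fixing a rational $\delta>0$, I would write $\mathcal{C}(S)=\bigcup_{C\in\mathbb{Q}^+}\mathcal{A}_{C,\delta}$ where $\mathcal{A}_{C,\delta}=\bigcap_{n,q,\varepsilon}D(C,n,q,\varepsilon)$ is the set of chains that are ``$\delta$-convexly unconditional with constant $C$''. As in Theorem~\ref{th.unconditional}, the sets $D(C,n,q,\varepsilon)$ are also closed, so each $\mathcal{A}_{C,\delta}$ is closed; by the Baire category theorem one of them has nonempty interior, which means there are nodes $s_1<\dots<s_m$ such that every chain extending $s_1,\dots,s_m$ lies in $\mathcal{A}_{C,\delta}$, and taking the subtree of $s_m$ and its followers gives a subtree on which the convex-unconditionality constant for this $\delta$ is the single value $C(\delta)$.

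Finally, to handle all $\delta>0$ simultaneously I would run the same diagonal-over-the-tree construction as in the proof of Theorem~\ref{th.nearly-uncon}: choosing a null sequence $\delta_k=1/k$, build $T$ level by level so that from level $k$ onwards the subtree sitting below any level-$k$ node is $\delta_k$-convexly unconditional with a constant $R(1/k)$, and glue these together. Then for a maximal chain $\beta=(s_i)$ and an absolutely convex combination $x=\sum a_i x_{s_i}$ with $\|x\|\ge 1/k$, I would split $x=\sum_{i\le k}+\sum_{i>k}$; the head has at most $k$ terms and is handled by the finite unconditional constant $B(k-1)$ of the chains through level $k$, while the tail is an absolutely convex combination (up to scaling by a factor $\le 1$) living below a level-$k$ node, so its sign-changed version has norm $\ge R(1/k)\cdot(\text{something}\ge\|x\|-D\|\sum_{i\le k}\|)$, and combining the two estimates with the basis constant $D$ gives a lower bound $C(1/k)$ depending only on $k$. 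The main obstacle is making the ``absolutely convex combination'' bookkeeping in this splitting precise --- in particular controlling how $\|x\|$, the norm of the head, and the norm of the tail interact, since after removing the head the tail need not itself have norm bounded below, so one must argue by cases (either the head already has norm $\ge\frac12\|x\|$, in which case the finite unconditional constant finishes it, or the tail does, in which case the uniform $\delta_k$-estimate applies); the rest is the routine diagonalization already carried out for the nearly unconditional case, so I would simply refer to that proof for those details.
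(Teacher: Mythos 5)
Your proposal is correct and follows essentially the same route as the paper's proof: the same reduction to a common basis constant $D$, the same Borel coding of convex unconditionality by rational data followed by Stern's theorem and Theorem~\ref{th.AMT} to exclude the empty-intersection alternative, the same Baire-category step for a fixed $\delta$, and the same level-by-level diagonalization with the head/tail case split (your ``one of the two pieces has norm at least half of $\|x\|$'' dichotomy is exactly the paper's Case 1/Case 2, with the tail renormalized by $a=\sum_{n>k}\lvert a_n\rvert\in[\tfrac{1}{2k},1]$ before applying the constant $R(1/k)$). No gaps worth noting.
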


\begin{proof}
The proof follows the lines of the proof of Theorem
\ref{th.nearly-uncon}. We assume that for any chain $\beta$ of $S$
the sequence $(x_s)_{s\in\beta}$ is $D$-basic. We consider the
following subset of $\mathcal{C}(S)$:
$$\mathcal{A}=\{\beta\in\mathcal{C}(S) \mid (x_s)_{s\in\beta} \text{ is convexly unconditional}\}.
$$
We observe that:
\begin{itemize}
\item[{}] $\beta=(s_i)\in\mathcal{A}  \Leftrightarrow (x_s)_{s\in\beta}$
is convexly unconditional
\item[{}] $\Leftrightarrow$ for every $\delta>0$ there is $C=C(\delta,\beta)>0$ such
that for any $(a_n)\in \mathbb{R}^\mathbb{N}$ with
$\sum_{n=1}^\infty |a_n|=1$ and $\|\sum_{n=1}^\infty
a_nx_{s_n}\|\geq \delta$ and any sequence
$(\varepsilon_n)\in\{-1,1\}^\mathbb{N}$, $\left\|\sum_{n=1}^\infty
\varepsilon_na_nx_{s_n}\right\|\ge C$

\item[{}] $\Leftrightarrow$ for every $\delta>0$ there is $C=C(\delta,\beta)>0$ such
that for any $N\in\mathbb{N}$, any $(a_n)_{n=1}^N\in \mathbb{R}^N$
with $\sum_{n=1}^N |a_n|=1$ and $\|\sum_{n=1}^N a_nx_{s_n}\|\geq
\delta$ and any $(\varepsilon_n)_{n=1}^N\in\{-1,1\}^N$,
$\left\|\sum_{n=1}^N \varepsilon_na_nx_{s_n}\right\|\ge C$

\item[{}] $\Leftrightarrow$ for every $\delta\in\mathbb{Q}^+$ there is $C=C(\delta,\beta)\in\mathbb{Q}^+$ such
that for any $N\in\mathbb{N}$, any $q=(q_n)_{n=1}^N\in \mathbb{Q}^N$
with $\sum_{n=1}^N |q_n|=1$, $\|\sum_{n=1}^N q_nx_{s_n}\|\geq
\delta$ and any $\varepsilon=(\varepsilon_n)_{n=1}^N\in\{-1,1\}^N$,
$\left\|\sum_{n=1}^N \varepsilon_nq_nx_{s_n}\right\|\ge C$.
\end{itemize}
Therefore
$$ \mathcal{A}=\bigcap_{\delta\in\mathbb{Q}^+}\bigcup_{C\in\mathbb{Q}^+}\bigcap_{N\in\mathbb{N}}
\bigcap_{\underset{\sum|q_n|=1}{q\in\mathbb{Q}^N}}\Big[E(\delta,N,q)\bigcup\Big(\bigcap_{\varepsilon\in\{-1,1\}^N}
D(\delta,C,N,q,\varepsilon)\Big)\Big],$$ where,
\begin{align*}
    E(\delta,N,q) &= \Big\{\beta=(s_i)\in\mathcal{C}(S)\mid
    \Big\|\sum_{n=1}^Nq_nx_{s_n}\Big\|<\delta\Big\}\\
    D(\delta,C,N,q,\varepsilon) &=
    \Big\{\beta=(s_i)\in\mathcal{C}(S)\mid
    \Big\|\sum_{n=1}^Nq_nx_{s_n}\Big\|\ge\delta \text{ and } \Big\|\sum_{n=1}^N \varepsilon_nq_nx_{s_n}\Big\|\ge
    C \Big\}.
\end{align*}

It follows that $\mathcal{A}$ is a Borel subset of $\mathcal{C}(S)$.
Stern's theorem implies that there is a subtree $S'$ of $S$ such
that either (a)~$\mathcal{C}(S')\subseteq\mathcal{A}$ or
(b)~$\mathcal{C}(S')\cap\mathcal{A}=\emptyset$. By Theorem
\ref{th.AMT} we must have (a), that is for any chain $\beta$ of $S'$
the sequence $(x_s)_{s\in\beta}$ is convexly unconditional.

We next assume that for the original tree $S$ we have
$\mathcal{C}(S)\subseteq\mathcal{A}$ and we show that there is a
subtree $T$ of $S$ such that: for any $\delta>0$ there exists
$C=C(\delta)>0$ depending only on $\delta$ such that for any chain
$\beta=(s_i)$ of $T$, any absolutely convex combination
$x=\sum^\infty_{n=1}a_nx_{s_n}$ with $\|x\|\ge\delta$ and any
sequence $(\varepsilon_n)_{n\in\mathbb{N}}$ of signs,
$\|\sum^\infty_{n=1}\varepsilon_na_nx_{s_n}\|\ge C(\delta)$.

For a fixed $\delta>0$, as in the proof of Theorem
\ref{th.unconditional}, we find a subtree which satisfies the
desired property for the specific $\delta$. We next consider the
sequence $\delta_n=\frac{1}{2n}$ and using repeatedly the previous
observation we inductively construct a subtree $T$ of $S$ and
positive constants $R(1/k)$, $k\in\mathbb{N}$, such that: for any
$k\in\mathbb{N}$, any chain $\beta=(s_i)_{i=1}^\infty$ of $T$ with
$lev_T(s_1)\ge k$, any absolutely convex combination
$x=\sum^\infty_{n=1}a_nx_{s_n}$ with $\|x\|\ge1/2k$ and any
$(\varepsilon_n)_{n\in\mathbb{N}}\in\{-1,1\}^\mathbb{N}$,
$\|\sum^\infty_{n=1}\varepsilon_na_nx_{s_n}\|\geq R(1/k)$.

\begin{claim}
The subtree $T$ satisfies the conclusion of the theorem.
\end{claim}

Let $k\in\mathbb{N}$. We show that there is a constant $C(1/k)$
depending on $k$ such that: for any maximal chain
$\beta=(s_i)^\infty_{i=1}\in\mathcal{C}(T)$, any absolutely convex
combination $x=\sum^\infty_{n=1}a_nx_{s_n}$ with $\|x\|\geq 1/k$ and
any sequence $(\varepsilon_n)_{n\in\mathbb{N}}$ of signs,
$\|\sum_{n=1}^\infty\varepsilon_na_nx_{s_n}\|\geq C(1/k)$. We
distinguish the following two cases.

\emph{Case 1.} Suppose that $\left\|\sum_{n=1}^k
a_nx_{s_n}\right\|\ge\frac{1}{2k}$. Then we have

\begin{align*}
\Big\|\sum^\infty_{n=1}\varepsilon_na_nx_{s_n}\Big\| &\ge
\frac{1}{D} \Big\|\sum_{n=1}^k
\varepsilon_na_nx_{s_n}\Big\| \\
&\ge \frac{1}{2B(k-1)
D} \Big\|\sum_{n=1}^k a_nx_{s_n}\Big\| \\
&\ge\frac{1}{2B(k-1) D} \,\, \frac{1}{2k}
\end{align*}
where the constant $B(k-1)$ has been defined in the proof of Theorem
\ref{th.nearly-uncon}.

\emph{Case 2.} Suppose that
$\left\|\sum^\infty_{n=k+1}a_nx_{s_n}\right\|\ge1/2k$. We set
$a=\sum^\infty_{n=k+1}|a_n|$ and
$x=\sum^\infty_{n=k+1}\frac{a_n}{a}\, x_{s_n}$. Then $1/2k\le a\le1$
and $x$ is an absolutely convex combination of
$(x_{s_n})^\infty_{n=k+1}$ such that $\|x\|=\frac{1}{a}
\left\|\sum^\infty_{n=k+1}a_nx_{s_n}\right\|\ge\frac{1}{2k}$. By the
construction of $T$, it follows that $\left\|\sum^\infty_{n=k+1}
\varepsilon_n\frac{a_n}{a}x_{s_n}\right\|\geq R(1/k)$. Therefore

\begin{align*}
\Big\|\sum^\infty_{n=1}\varepsilon_na_nx_{s_n}\Big\|
 & \ge\frac{1}{2D} \Big\|\sum^\infty_{n=k+1}\varepsilon_na_nx_{s_n}\Big\|
\\
&=\frac{a}{2D} \Big\|\sum^\infty_{n=k+1}\varepsilon_n\frac{a_n}{a}
x_{s_n}\Big\| \\
&\ge\frac{1}{2D}\, \frac{1}{2k}\, R(1/k).
\end{align*}
The choice $C(1/k)=\min\left\{\frac{1}{2B(k-1) D} \,
\frac{1}{2k}\,,\,\frac{1}{2D}\, \frac{1}{2k}\,  R(1/k)\right\}$
completes the proof.
\end{proof}

\section{A dichotomy result for more general
tree-families}\label{sec.general}
 In this section, we consider the
more general setting, where $(x_s)_{s\in S}$ is a normalized
tree-family such that for any chain $\beta$ of $S$,
$(x_s)_{s\in\beta}$ is a Schauder basic sequence. For such families
we prove the following dichotomy theorem. Recall that a normalized
Schauder basis $(e_n)$ is called \emph{semi-boundedly} complete if
for every sequence $(\lambda_i)\in\mathbb{R}^\mathbb{N}$, the
condition $\sup_n\|\sum_{i=1}^n\lambda_ie_i\|<+\infty$ implies that
$\lim_{n\to+\infty}\lambda_n=0$.

\begin{theorem}\label{th.general}
Let $(x_s)_{s\in S}$ be a normalized tree-family in a Banach space
$X$ such that for any chain $\beta\in\mathcal{C}(S)$, the sequence
$(x_s)_{s\in\beta}$ is Schauder basic. Then there exists a subtree
$T$ of $S$ such that: either
\begin{enumerate}
    \item for any chain $\beta\in\mathcal{C}(T)$, the sequence
    $(x_s)_{s\in\beta}$ is semi-boundedly complete; or
    \item for no chain $\beta\in\mathcal{C}(T)$, the sequence
    $(x_s)_{s\in\beta}$ is semi-boundedly complete.
\end{enumerate}
\end{theorem}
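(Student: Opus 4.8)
The plan is to apply Stern's theorem (Theorem \ref{th.Stern}) to the set
$$\mathcal{A}=\{\beta\in\mathcal{C}(S)\mid (x_s)_{s\in\beta}\text{ is semi-boundedly complete}\}.$$
If I can show that $\mathcal{A}$ is analytic, then Stern's dichotomy immediately yields a subtree $T$ with either $\mathcal{C}(T)\subseteq\mathcal{A}$ (case (1)) or $\mathcal{C}(T)\cap\mathcal{A}=\emptyset$ (case (2)), which is exactly the statement. So unlike the earlier theorems, here there is \emph{no} combinatorial input needed to exclude one of the alliases --- both alternatives are genuinely possible, and the whole content is the descriptive-set-theoretic complexity estimate. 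This matches the remark in the introduction that this is the one place where the full strength of Stern's theorem (analytic, not merely Borel sets) is used.

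The main work is therefore to write $\mathcal{A}$ with a $\mathbf{\Sigma}^1_1$ formula. First I would unwind the definition of semi-boundedly complete. A basic sequence $(x_{s_i})$ is \emph{not} semi-boundedly complete iff there is a scalar sequence $(\lambda_i)$ with $\sup_n\|\sum_{i=1}^n\lambda_ix_{s_i}\|<+\infty$ but $\lambda_i\not\to0$; by rescaling one may assume $\sup_n\|\sum_{i=1}^n\lambda_ix_{s_i}\|\le1$ and $\limsup_i|\lambda_i|\ge\eta$ for some $\eta\in\mathbb{Q}^+$, and by a perturbation/density argument the $\lambda_i$ can be taken rational. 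Thus
$$\mathcal{C}(S)\setminus\mathcal{A}=\bigcup_{\eta\in\mathbb{Q}^+}\Big\{\beta=(s_i)\mid \exists\,(\lambda_i)\in\mathbb{Q}^{\mathbb{N}}:\ \sup_n\Big\|\sum_{i=1}^n\lambda_ix_{s_i}\Big\|\le1\ \text{and}\ \forall N\,\exists i\ge N:\ |\lambda_i|\ge\eta\Big\}.$$
The inner matrix condition, for a \emph{fixed} sequence $(\lambda_i)$, is Borel in $\beta$ (it is a countable intersection/union of closed conditions $\|\sum_{i=1}^n\lambda_ix_{s_i}\|\le1$ and of the open-in-$\beta$ conditions $|\lambda_i|\ge\eta$, each of which depends only on finitely many coordinates of $\beta$ since the map $\beta\mapsto s_i$ is continuous). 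Quantifying existentially over the \emph{countable} parameter set $\mathbb{Q}^{\mathbb{N}}$ (a Polish space) projects a Borel set, giving an analytic set; the countable union over $\eta$ keeps it analytic. Hence $\mathcal{C}(S)\setminus\mathcal{A}$ is analytic, so $\mathcal{A}$ is coanalytic. Since Stern's theorem applies to analytic sets and hence (taking complements in the homogeneous subtree) equally decides coanalytic sets, we are done --- alternatively one runs the argument on $\mathcal{C}(S)\setminus\mathcal{A}$ directly.

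The step I expect to be the main obstacle is justifying that the witnessing sequence $(\lambda_i)$ may be taken with rational entries while keeping \emph{both} the uniform boundedness $\sup_n\|\sum_{i=1}^n\lambda_ix_{s_i}\|<+\infty$ and the non-vanishing $\limsup|\lambda_i|>0$ --- a naive coordinate-by-coordinate rational approximation need not preserve the supremum over infinitely many partial sums. The clean fix is to not insist on rationalizing at all: instead quantify $(\lambda_i)$ over the Polish space $\prod_i[-M,M]$ for $M\in\mathbb{N}$ (using the basis constant to bound $|\lambda_i|\le(1+D)\sup_n\|\sum\lambda_jx_{s_j}\|$, so a bounded-coefficient witness always exists after rescaling), which is compact metrizable; the set of pairs $(\beta,(\lambda_i))$ satisfying the matrix condition is then Borel in the product $\mathcal{C}(S)\times\prod_i[-M,M]$, and its projection to $\mathcal{C}(S)$ is analytic by the classical projection theorem. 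One must also check the measurability of $(\beta,(\lambda_i))\mapsto\|\sum_{i=1}^n\lambda_ix_{s_i}\|$, which is continuous since $\beta\mapsto s_i$ is continuous and $(x_s)$ is a fixed family; this is routine. With that packaging in place, the analyticity of $\mathcal{A}$ (or its complement) follows, and the theorem is an immediate consequence of Theorem \ref{th.Stern}.
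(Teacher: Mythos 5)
Your proposal is correct and follows essentially the same route as the paper: the paper also defines $\mathcal{A}$ as the set of chains $\beta$ with $(x_s)_{s\in\beta}$ semi-boundedly complete, shows its complement is the projection of a Borel subset of $\mathcal{C}(S)\times\mathbb{R}^{\mathbb{N}}$ (the pairs $(\beta,\lambda)$ with $\lambda$ bounded, not tending to $0$, and $\sup_n\|\sum_{i=1}^n\lambda_ix_{s_i}\|<\infty$), hence analytic, and then applies Stern's theorem with no need to exclude either alternative. Your decision to abandon the rationalization of $(\lambda_i)$ in favor of the projection argument is exactly the paper's packaging, so no further changes are needed.
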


\begin{proof}
We consider the following subset of $\mathcal{C}(S)$
$$\mathcal{A}=\{\beta\in\mathcal{C}(S) \mid (x_s)_{s\in \beta} ~
\text{is semi-boundedly complete}\}.$$

\begin{claim}
The set $\mathcal{A}$ is co-analytic.
\end{claim}
In particular, we prove that the complement of $\mathcal{A}$ is an
analytic subset of $\mathcal{C}(S)$. To this end, we consider the
space $\mathcal{C}(S)\times \mathbb{R}^\mathbb{N}$ endowed with the
product topology and we set
\begin{flushright}
$\mathcal{F}=\{(\beta,\lambda)\in \mathcal{C}(S)\times
\mathbb{R}^\mathbb{N} \mid \beta=(s_i)$,  $\lambda=(\lambda_i)$ is a
bounded sequence not converging to $0$ such that
$\sup_n\|\sum_{i=1}^n\lambda_i x_{s_i}\|<+\infty \}$.
\end{flushright}
Clearly,
$\mathcal{C}(S)\setminus\mathcal{A}=\text{proj}_1(\mathcal{F})$,
where $\text{proj}_1$ denotes the projection
$\text{proj}_1:\mathcal{C}(S)\times \mathbb{R}^\mathbb{N} \to
\mathcal{C}(S)$. Therefore, it suffices to show that $\mathcal{F}$
is a Borel subset of $\mathcal{C}(S)\times \mathbb{R}^\mathbb{N}$.

Now, we write
$$\mathcal{F}=\mathcal{F}_1\cap\mathcal{F}_2,$$
where
\begin{align*}
    \mathcal{F}_1 &= \{ (\beta,\lambda)\in \mathcal{C}(S)\times
\mathbb{R}^\mathbb{N} \mid \beta=(s_i),~  \lambda=(\lambda_i)
~\text{and} ~ \sup_n\|\sum_{i=1}^n\lambda_i x_{s_i}\|<+\infty \}\\
\mathcal{F}_2 &= \{ (\beta,\lambda)\in \mathcal{C}(S)\times
\mathbb{R}^\mathbb{N} \mid  \lambda=(\lambda_i) ~\text{is a bounded
sequence not converging to} ~ 0\}\\
&= \mathcal{C}(S) \times \{\lambda \in \mathbb{R}^\mathbb{N} \mid
\lambda=(\lambda_i) ~\text{is a bounded sequence not converging to}
~ 0\}.
\end{align*}

First, we argue that $\mathcal{F}_1$ is a Borel set. Indeed, we have
\begin{align*}
    (\beta,\lambda)\in \mathcal{F}_1 & \Leftrightarrow \sup_n\|\sum_{i=1}^n\lambda_i
    x_{s_i}\|<+\infty\\
    &\Leftrightarrow (\exists M>0) (\forall n\in\mathbb{N}) \Big[\|\sum_{i=1}^n\lambda_i
    x_{s_i}\|\le M\Big]\\
    & \Leftrightarrow (\exists M\in\mathbb{Q}^+) (\forall n\in\mathbb{N}) \Big[\|\sum_{i=1}^n\lambda_i
    x_{s_i}\|\le M\Big].
\end{align*}
Therefore,
$$\mathcal{F}_1=\bigcup_{M\in\mathbb{Q}^+} \bigcap_{n\in\mathbb{N}}
\mathcal{G}_{M,n}$$
 where $\mathcal{G}_{M,n}=\{(\beta,\lambda)\in \mathcal{C}(S)\times
\mathbb{R}^\mathbb{N} \mid \|\sum_{i=1}^n\lambda_i x_{s_i}\|\le
M\}$. Clearly, $\mathcal{G}_{M,n}$ is an open set and hence
$\mathcal{F}_1$ is a Borel set.

Next, we observe that $\mathcal{F}_2$ is also a Borel set. Indeed,
it is enough to write down the following:
\begin{align*}
    \lambda=(\lambda_i)~ \text{is a bounded sequence} &\Leftrightarrow
(\exists M\in\mathbb{Q}^+) (\forall i\in\mathbb{N})
\Big[\abs{\lambda_i}\le M\Big]\\
\lambda=(\lambda_i)~ \text{does not converge to} ~0 &\Leftrightarrow
(\exists \epsilon\in\mathbb{Q}^+)(\forall i_0\in\mathbb{N})(\exists
i\ge i_0) [\abs{\lambda_i}>\epsilon].
\end{align*}
It is easy now to see that $\mathcal{F}_2$ is a Borel set and the
proof of the claim is complete.

Since $\mathcal{A}$ is co-analytic, we can apply Stern's theorem. It
follows that there exists a subtree $T$ of $S$ such that either
$\mathcal{C}(T)\subset \mathcal{A}$ or
$\mathcal{C}(T)\cap\mathcal{A}=\emptyset$, that is either
\begin{enumerate}
    \item for any chain $\beta \in \mathcal{C}(T)$, the sequence
    $(x_s)_{s\in \beta}$ is semi-boundedly complete; or
    \item for no chain $\beta\in \mathcal{C}(T)$, the sequence
    $(x_s)_{s\in\beta}$ is semi-boundedly complete.
\end{enumerate}

\end{proof}

Suppose now that $(x_s)_{s\in S}$ is a normalized tree-family such
that for any chain $\beta\in\mathcal{C}(S)$, $(x_s)_{s\in\beta}$ is
weakly null. In this case, using Theorem \ref{th.nearly-uncon}, we
can improve the result of Theorem \ref{th.general} and we obtain the
following dichotomy.

\begin{theorem}\label{th.dichotomy-c0}
Let $(x_s)_{s\in S}$ be a normalized tree-family in a Banach space
$X$. We assume that for any chain $\beta\in\mathcal{C}(S)$, the
sequence $(x_s)_{s\in\beta}$ is weakly null. Then there exists a
subtree $T$ of $S$ such that either:
\begin{enumerate}
    \item for any chain $\beta\in\mathcal{C}(T)$, the sequence
    $(x_s)_{s\in\beta}$ is semi-boundedly complete; or
    \item for any chain $\beta\in\mathcal{C}(S)$, the sequence
    $(x_s)_{s\in\beta}$ is $C$-equivalent to the unit vector basis
    of $c_0$, where $C>0$ is a common constant.
\end{enumerate}

\end{theorem}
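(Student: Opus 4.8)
The plan is to combine the dichotomy of Theorem~\ref{th.general} with the structural information provided by Theorem~\ref{th.nearly-uncon}. First I would apply Theorem~\ref{th.general} to obtain a subtree $S'$ of $S$ so that either (a)~every chain $\beta\in\mathcal{C}(S')$ gives a semi-boundedly complete sequence, or (b)~no chain of $S'$ does. In case~(a) we are already done, taking $T=S'$. So the real work lies in case~(b): we must upgrade the mere \emph{failure} of semi-boundedly completeness on every chain to the much stronger statement that every chain is uniformly $C$-equivalent to the $c_0$-basis.

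In case~(b), note that the weak-nullity hypothesis is inherited by $S'$, so Theorem~\ref{th.nearly-uncon} applies and yields a further subtree $T\subseteq S'$ on which the nearly unconditional estimates hold with a constant $C(\delta)$ independent of the chain. By the Lemma preceding Theorem~\ref{th.nearly-uncon} we may also assume, after shrinking once more, that every chain of $T$ is $D$-basic for a fixed $D$. Now fix any chain $\beta=(s_i)\in\mathcal{C}(T)$. Since $(x_s)_{s\in\beta}$ is a normalized, weakly null, $D$-basic, nearly unconditional sequence that is \emph{not} semi-boundedly complete, there is a bounded scalar sequence $(\lambda_i)$ with $\sup_n\|\sum_{i=1}^n\lambda_ix_{s_i}\|<\infty$ and $\limsup_i|\lambda_i|=\rho>0$. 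Passing to the subset of indices where $|\lambda_i|>\rho/2$, the near-unconditionality constant $C(\rho/2)$ shows that the corresponding subsequence dominates the $c_0$-basis with a constant controlled by $C(\rho/2)$, $D$, and the bound $\sup_n\|\sum_{i=1}^n\lambda_ix_{s_i}\|$. Combined with the well-known fact that a normalized weakly null sequence with no semi-boundedly complete subsequence must actually be $c_0$-saturated in this uniform sense (this is exactly the content of the Corollary following Theorem~\ref{th.nearly-uncon}, whose hypothesis is that no chain is $c_0$-equivalent), I would argue by contraposition: if some chain of $T$ had no $c_0$-subsequence with a uniform constant, then by that Corollary a further subtree would make all chains semi-boundedly complete, contradicting case~(b).

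To pin down a \emph{common} constant $C$ valid for \emph{all} chains of $T$ simultaneously (not merely a constant depending on each chain), I would use the same Baire-category trick employed in Theorems~\ref{th.unconditional}, \ref{th.nearly-uncon} and \ref{th.convex}. Specifically, for each rational $C>0$ let $\mathcal{B}_C=\{\beta\in\mathcal{C}(T)\mid (x_s)_{s\in\beta}\text{ is }C\text{-equivalent to the }c_0\text{-basis}\}$. One checks, exactly as in those earlier proofs, that membership in $\mathcal{B}_C$ is expressible by countably many closed conditions of the form $\|\sum_{i\in A}q_ix_{s_i}\|\le C\max_i|q_i|$ and $\max_i|q_i|\le C\|\sum_{i=1}^n q_ix_{s_i}\|$ over rational coefficients and finite index sets, so each $\mathcal{B}_C$ is closed and $\mathcal{C}(T)=\bigcup_{C\in\mathbb{Q}^+}\mathcal{B}_C$. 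By the Baire category theorem some $\mathcal{B}_C$ has nonempty interior, hence contains $\mathcal{C}(T')$ for the subtree $T'$ consisting of a node and all its followers; replacing $T$ by $T'$ gives alternative~(2) with a single constant $C$.

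The main obstacle I anticipate is the second step: extracting, from the bare failure of semi-boundedly completeness on a single chain, a $c_0$-subsequence whose equivalence constant is controlled purely by the \emph{uniform} near-unconditionality function $C(\cdot)$ and the basis constant $D$ — rather than by chain-dependent data. The near-unconditionality of Theorem~\ref{th.nearly-uncon} is precisely the tool that makes this uniform, since it bounds the sup-norm-type quantity $\|\sum_{i\in F}a_ix_{s_i}\|$ by $C(\delta)\|\sum_i a_ix_{s_i}\|$ with $C(\delta)$ independent of $\beta$; the lower $c_0$-estimate comes for free from normalization and $D$-basicness. Once this uniform two-sided estimate is in hand on each chain, the Baire-category packaging is routine and mirrors the earlier sections verbatim.
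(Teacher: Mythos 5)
Your outline matches the paper's strategy up to a point: reduce to the case where no chain of $S'$ is semi-boundedly complete, pass to a subtree where Theorem~\ref{th.nearly-uncon} and $D$-basicness hold, and use the failure of semi-bounded completeness on each chain to extract a subchain equivalent to the unit vector basis of $c_0$ (your extraction of $(\lambda_i)$ and the indices with $\abs{\lambda_i}>\rho/2$ is exactly the paper's first step, modulo the slip that the near-unconditionality estimate shows the subsequence is \emph{dominated by} the $c_0$-basis; the lower estimate comes from normalization and basicness). The final Baire-category step is also the paper's.

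The genuine gap is the bridge between these two stages. What you have established is that \emph{every chain contains a $c_0$-equivalent subchain}; what the Baire-category step needs is that \emph{every chain of some subtree is itself $c_0$-equivalent}, since otherwise the decomposition $\mathcal{C}(T)=\bigcup_{C\in\mathbb{Q}^+}\mathcal{B}_C$ simply fails to cover $\mathcal{C}(T)$. Your attempted bridge via the Corollary after Theorem~\ref{th.nearly-uncon} does not work: that Corollary's hypothesis is that \emph{no} chain of the whole tree is $c_0$-equivalent, which is not implied by a single chain lacking a uniformly $c_0$-equivalent subsequence, and its contrapositive only produces the existence of \emph{one} $c_0$-equivalent chain, far short of what you need. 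The paper closes this gap with a second application of Stern's theorem: the set $\mathcal{A}=\{\beta\in\mathcal{C}(S)\mid (x_s)_{s\in\beta}\text{ is equivalent to the unit vector basis of }c_0\}$ is Borel (a countable union over $C\in\mathbb{Q}^+$ of countable intersections of clopen conditions $\|\sum_i q_i x_{s_i}\|\le C\max_i\abs{q_i}$), so Stern's theorem yields a subtree $S''$ with $\mathcal{C}(S'')\subseteq\mathcal{A}$ or $\mathcal{C}(S'')\cap\mathcal{A}=\emptyset$; the second alternative is impossible precisely because every chain has a subchain in $\mathcal{A}$ and subchains of chains of $S''$ are again chains of $S''$. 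Only after this step does the Baire-category argument apply to produce the common constant. You should insert this Stern application; without it the proof is incomplete.
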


\begin{proof}
We may assume that for any chain $\beta\in\mathcal{C}(S)$, the
sequence $(x_s)_{s\in\beta}$ is $D$-basic. In view of Theorem
\ref{th.general}, it suffices to consider only the case where no
sequence $(x_s)_{s\in\beta}$, $\beta\in\mathcal{C}(S)$, is
semi-boundedly complete and then we have to prove that there exists
a subtree $T$ of $S$ such that for any chain
$\beta\in\mathcal{C}(T)$, $(x_s)_{s\in\beta}$ is equivalent to the
unit vector basis of $c_0$. Furthermore, by Theorem
\ref{th.nearly-uncon} we may assume that for any chain
$\beta\in\mathcal{C}(S)$, the sequence $(x_s)_{s\in\beta}$ is nearly
unconditional.

Our first step is to show that any chain $\beta$ of $S$ contains a
subchain $\alpha\subset \beta$, such that $(x_s)_{s\in\alpha}$ is
equivalent to the unit vector basis of $c_0$. The proof of this fact
is essentially contained in \cite{Odell}. However we shall give a
brief description.

Since $(x_s)_{s\in\beta}$ is not semi-boundedly complete, it follows
that there exists a bounded sequence
$(\lambda_i)\in\mathbb{R}^\mathbb{N}$ not converging to $0$ so that
$\sup_n\|\sum_{i=1}^n\lambda_ix_{s_i}\|<\infty$. Clearly, we may
assume that $\abs{\lambda_i}\le 1$ for every $i\in\mathbb{N}$. Let
$M=(m_i)\subset \mathbb{N}$ and $\delta>0$ be such that
$\abs{\lambda_{m_i}}\ge \delta$ for all $i\in\mathbb{N}$. Then we
set $\alpha=(s_{m_i})_{i\in\mathbb{N}}\subset \beta$ and we claim
that $(x_s)_{s\in\alpha}$ is equivalent to the unit vector basis of
$c_0$.

Firstly, by Theorem \ref{th.nearly-uncon}, we have
$$L:=\sup_n\|\sum_{i=1}^n\lambda_{m_i}x_{s_{m_i}}\|<+\infty$$
and further, for any signs $(\varepsilon_i)\in\{-1,1\}^n$,
$$\|\sum_{i=1}^n \varepsilon_i \lambda_{m_i}x_{s_{m_i}}\| \le 2
C(\delta) \|\sum_{i=1}^n\lambda_{m_i}x_{s_{m_i}}\| \leq 2 C(\delta)
L$$
 (where the constant $C(\delta)$ is given by Theorem
 \ref{th.nearly-uncon}). Therefore, for any
$n\in\mathbb{N}$, any $t_1,\ldots,t_n\in\mathbb{R}$ and any $f\in
X^\ast$, $\|f\|\le 1$, we obtain
\begin{align*}
    \abs{f(\sum_{i=1}^nt_ix_{s_{m_i}})} & \leq \sum_{i=1}^n
    \abs{t_i} \abs{f(x_{s_{m_i}})} \le (\max\abs{t_i}) \sum_{i=1}^n \varepsilon_i
    f(x_{s_{m_i}})\\
    &\leq \frac{1}{\delta} (\max\abs{t_i}) \sum_{i=1}^n
    \varepsilon_i \lambda_{m_i} f(x_{s_{m_i}}) \\
    & \leq \frac{1}{\delta} (\max\abs{t_i}) \|\sum_{i=1}^n
    \varepsilon_i \lambda_{m_i} x_{s_{m_i}}\|  \\
    &\leq \frac{1}{\delta} 2 C(\delta) L \max\abs{t_i}.
\end{align*}
It follows that
$$\Big\|\sum_{i=1}^nt_ix_{s_{m_i}}\Big\| \le
\frac{2C(\delta)L}{\delta} \max_i\abs{t_i}.$$

So far we have shown that any chain $\beta\in\mathcal{C}(S)$
contains a subchain $\alpha\subset \beta$ such that
$(x_s)_{s\in\alpha}$ is equivalent to the unit vector basis of
$c_0$. Now, we proceed as follows. We consider the set
$$\mathcal{A}=\{\beta\in \mathcal{C}(S) \mid (x_s)_{s\in\beta}
~\text{is equivalent to the unit vector basis of } c_0\}.$$
 and we observe that
 \begin{itemize}
\item[{}] $\beta=(s_i)\in\mathcal{A}  \Leftrightarrow (x_s)_{s\in\beta}$ is
equivalent to the unit vector basis of $c_0$
\item[{}] $\Leftrightarrow$ there is $C>0$ such
that for any $n\in \mathbb{N}$ and any
$(a_1,\ldots,a_n)\in\mathbb{R}^n$, $\left\|\sum_{i =1}^n
a_ix_{s_i}\right\|\le C\max \abs{a_i}$
\item[{}] $\Leftrightarrow$ there is $C\in \mathbb{Q}^+$ such that for any
$n\in\mathbb{N}$ and any $q=(q_1,\ldots,q_n)\in\mathbb{Q}^n$,
$\left\|\sum_{i=1}^n q_ix_{s_i}\right\|\le C\max \abs{q_i}.$
\end{itemize}
Hence
$$ \mathcal{A}=\bigcup_{C\in\mathbb{Q}^+}\bigcap_{n\in\mathbb{N}}\bigcap_{q\in\mathbb{Q}^n} D(C,n,q),$$
 where,
$$D(C,n,q)=\Big\{\beta=(s_i)\in\mathcal{C}(S)\mid
\Big\|\sum_{i=1}^n q_ix_{s_i}\Big\|\le C \max \abs{q_i}\Big\}.
$$
It follows that $\mathcal{A}$ is a Borel subset of $\mathcal{C}(S)$.
Therefore, by Theorem \ref{th.Stern} there exists a subtree $S'$ of
$S$ such that either (a)~$\mathcal{C}(S') \subset \mathcal{A}$ or
(b)~$\mathcal{C}(S') \cap \mathcal{A} =\emptyset$. However, the case
(b) must be excluded, since for any chain $\beta\in\mathcal{C}(S)$
the sequence $(x_s)_{s\in\beta}$ contains a subsequence equivalent
to the basis of $c_0$. Finally, as in the proof of Theorem
\ref{th.unconditional}, an application of the Baire category theorem
shows that we can find a further subtree $T$ of $S'$ and a constant
$C>0$ such that for any chain $\beta\in\mathcal{C}(T)$,
$(x_s)_{s\in\beta}$ is $C$-equivalent to the unit vector basis of
$c_0$.

\end{proof}

\bibliographystyle{amsplain}

\end{document}